\documentclass[english, 11pt]{amsart}

\usepackage{xcolor}

\usepackage{bbm}
\numberwithin{equation}{section}

\usepackage{amsrefs}

\usepackage{fullpage}
\usepackage[T1]{fontenc}
\usepackage[utf8]{inputenc}
\usepackage{mathrsfs}
\usepackage{amsmath}
\usepackage{amsthm}
\usepackage{amssymb}

\theoremstyle{plain}
\newtheorem{thm}{\protect\theoremname}[section]
\theoremstyle{definition}
\newtheorem{condition}[thm]{\protect\conditionname}
\theoremstyle{remark}

\theoremstyle{plain}
\newtheorem{cor}[thm]{\protect\corollaryname}
\theoremstyle{plain}
\newtheorem*{thm*}{\protect\theoremname}
\theoremstyle{plain}
\newtheorem{prop}[thm]{\protect\propositionname}
\theoremstyle{plain}
\newtheorem{lem}[thm]{\protect\lemmaname}
\theoremstyle{plain}
\newtheorem*{lem*}{\protect\lemmaname}
\theoremstyle{definition}
\newtheorem{definition}[thm]{Definition}


\makeatother

\usepackage{babel}
\providecommand{\conditionname}{Condition}
\providecommand{\corollaryname}{Corollary}
\providecommand{\lemmaname}{Lemma}
\providecommand{\propositionname}{Proposition}
\providecommand{\remarkname}{Remark}
\providecommand{\theoremname}{Theorem}

\begin{document}
\global\long\def\iprod#1#2{\left\langle #1,\,#2\right\rangle }%

\global\long\def\EE{\mathbb{E}}%

\global\long\def\PP{\mathbb{P}}%

\global\long\def\i{\mathrm{i}}%

\global\long\def\d#1{\,{\rm d}#1}%

\global\long\def\R{\mathbb{R}}%

\global\long\def\C{\mathbb{C}}%

\global\long\def\Z{\mathbb{Z}}%

\global\long\def\N{\mathbb{N}}%

\global\long\def\Q{\mathbb{Q}}%

\global\long\def\T{\mathbb{T}}%

\global\long\def\F{\mathbb{F}}%

\global\long\def\vp{\varphi_n}%

\global\long\def\Sph{\mathbb{S}}%

\global\long\def\sub{\subseteq}%

\global\long\def\one{\mathbbm1}%

\global\long\def\vol#1{\text{vol}\left(#1\right)}%

\global\long\def\EE{\mathbb{E}}%

\global\long\def\sp{{\rm sp}}%

\global\long\def\iprod#1#2{\langle#1,\,#2\rangle}%

\global\long\def\conv#1{{\rm conv}\left(#1\right)}%

\global\long\def\supp#1{{\rm supp}\left(#1\right)}%

\global\long\def\eps{\varepsilon}%

\global\long\def\PP{\mathbb{P}}%

\global\long\def\ovr#1{{\rm ovr\left(#1\right)}}%

\global\long\def\norm#1{\left\Vert #1\right\Vert }%

\global\long\def\scr#1{\mathscr{#1}}%

\global\long\def\Im#1{{\rm Im}\, #1}%

\global\long\def\Re#1{{\rm Re}\, #1}%

\global\long\def\one{\overrightarrow{1}}%

\global\long\def\Tr{{\rm Tr}}%

\global\long\def\etc{,\,\dots,\,}%

\global\long\def\a{\alpha}%

\global\long\def\b{\beta}%

\global\long\def\P{\mathbb{P}}%

\global\long\def\E{\mathbb{E}}%

\global\long\def\e{\varepsilon}%

\global\long\def\pr#1#2{\left\langle #1,\,#2\right\rangle }%

\global\long\def\sign{{\rm sign}}%

\global\long\def\t#1{\tilde{#1}}%

\newcommand{\ind}{\mathbbm{1}}

\newcommand{\red}{\color{red}}

\newcommand{\poly}{\mathscr{P}}

\title{Size of Nodal Domains of the eigenvectors \\ of a $G\left(n,\,p\right)$ Graph}

\author{Han Huang \and Mark Rudelson }

\address{Department of Mathematics, University of Michigan, 530 Church St., Ann Arbor, MI 48109, U.S.A.}
\email{\{sthhan, rudelson\}@umich.edu}
\begin{abstract}
Consider an eigenvector of the adjacency matrix of a $G(n,p)$ graph.
A nodal domain is a connected component of the set of vertices where this eigenvector has a constant sign.
It is known that with high probability, there are exactly two nodal domains for each eigenvector corresponding to a non-leading eigenvalue. We prove that with high probability, the sizes of these nodal domains are approximately equal to each other.
\end{abstract}

\maketitle

\section{Introduction}

Nodal domains of the eigenfunctions of the Laplacian on smooth manifolds
have been studied for more than a century. We refer
the readers to the book \cite{Zelditch2017} for the details. If $f:M\to\R$ is such an eigenfunction
on a manifold $M$, then the nodal domain is a connected component
of the set  M where the function $f$ has a constant sign. The number
and the geometry of nodal domains provide an important insight into
the geometric structure of the manifold itself. A classical theorem
of Courant states that the number of nodal domains of the eigenfunction
corresponding to the $k$-th smallest eigenvalue is upper bounded
by $k$, and this number typically grows as $k$ increases \cite{CH89}.
In \cite{DLL11} Dekel, Lee, and Linial pioneered the study of
the nodal domains for graphs. This study was motivated by the usefulness
of the eigenvectors of graphs in a number of partitioning and clustering
algorithms, see \cite{DLL11} and the references therein. In the
last 10 years, these eigenvectors have played a crucial role in many
other computer science problems, including, e.g., community detection
\cite[Section 5.5]{Vershynin2018}. As the Laplacian of a graph is closely
related to the adjacency matrix, Dekel et.al. considered the eigenvectors
of the  latter matrix as an analog of the eigenfunctions of the Laplacian
on a manifold. We will arrange the eigenvectors of the adjacency matrix
in the order corresponding to the decreasing order of the eigenvalues.
An easy variational argument shows that that the first, i.e., the
leading eigenvector has only one domain, so the study of nodal domains
become non-trivial for the non-leading
eigenvectors. In general, one has to
distinguish between the strict and the non-strict domains, where the
former do not include vertices with zero coordinates.

The main result of \cite{DLL11} pertains to the $G(n,p)$ random
graphs in the case when $p\in(0,1)$ is a constant.
 Recall that an Erd\H{o}s-Rényi
Graph $G(n,p)$ is a random graph with $n$ vertices and any two
vertices are connected by an edge with probability $p$ independently.
In this case,
the authors discovered a new phenomenon showing that the behavior
of the number of nodal domains for a $G(n,p)$ graph is essentially
different from that for a manifold. More precisely, they proved that
with probability $1-o(1)$, the two largest non-strict nodal domains
of any non-leading eigenvector contain all but $O_{p}(1)$ vertices,
where the last quantity is uniform over the eigenvectors. Besides
proving this striking result, \cite{DLL11} emphasized that the
main approach to the study of nodal domains is through establishing
delocalization properties of the eigenvectors of random matrices.
At the time \cite{DLL11} was written, the study of delocalization
was in its infancy. Indeed, their theorem relies on a partial case
of \cite[Theorem 3.3]{LPRTV05}, which was the only result available
at that time. As the information on the delocalization of the eigenvectors
grew, so did the knowledge about the finer properties of the nodal
domains. In \cite{Nguyen2017}, Nguyen, Tao, and Vu proved that, with
probability $1-o(1)$, any eigenvector does not have zero coordinates,
which mean that the strong and the weak nodal domains of a $G(n,p)$
graph are the same with high probability. Also, Arora and Bhaskara
\cite{Arora2012EigenvectorsOR} improved the main theorem of \cite{DLL11}
by showing that if $p \ge n^{-1/19+o(1)}$ then with probability $1-o(1)$, any non-leading eigenvector
has exactly two nodal domains.
We refer readers to the 
articles \cite{TVW13, Erdos2013,EKYY12,HLY15,LS18} on other recent developments of local statistics of eigenvalues or 
delocalization of eigenvectors for sparse Erd\H{o}s-Rényi
Graph $G(n,p)$.

After these results became available, Linial put forward a program
of studying the geometry of nodal domains. Considering one of the
domains as earth, and another one as water, one can investigate the
length of the shoreline, which is the boundary of the domains, the
distribution of heights and depths measured as distances to the shoreline,
etc. Unfortunately, this geometry turned out to be trivial in the
case when $p\ge n^{-c}$ for some absolute
constant $c \in (0,1)$.
More precisely, it was proved in \cite{Rudelson2017}
that with probability $1-o(1)$, any vertex in the positive nodal
domain is connected to the negative one, and the same is true for
the vertices in the negative domain. Note that the case of very
sparse graphs $p\le n^{-c}$ is still open and may lead to a non-trivial
geometry. The proof of this result relied on the combination of the
no-gaps delocalization  \cite{Rudelson2016},  and a more classical $\ell_{\infty}$ delocalization
established by Erd\H{o}s, Knowles, Yau, and Yin  \cite{Erdos2013}.
The no-gaps delocalization discussed in
more detail below means that with high probability, any set $S$
of vertices carries a non-negligible proportion of the Euclidean norm
of the eigenvector, and this proportion is bounded below by a function
of $\left|S\right|/n$ only.
 The  $\ell_{\infty}$ delocalization means that the maximal coordinate of any unit eigenvector
does not exceed $n^{-1/2+o(1)}$ with high probability.

In this paper, we establish another natural property of nodal domains.
Namely, we will show that with high probability, the nodal domains
are balanced, i.e. each one of them contains close to $n/2$ vertices
with high probability. Unlike the previous ones, this property does
not follow from the combination of the no-gaps and the $\ell_{\infty}$
delocalization. Indeed, the vector $u\in S^{n-1}$ with $n/3$ coordinates
equal to $\sqrt{2}/\sqrt{n}$ and the rest $n/3$ coordinates equal
to $-1/\sqrt{2n}$ satisfies both properties. Moreover, for such vector,
$\sum_{j=1}^{n}u(j)=0$, so it is orthogonal to the vector $(1/\sqrt{n},\ldots,1/\sqrt{n})$
which is close to the leading eigenvector with high probability.

We prove that the nodal domains are roughly of the same size both
for the bulk and for the edge eigenvectors. However, the methods of proof
in these cases are different. Let us consider the bulk case
first as the proof in this case is shorter. Let $A$ be the adjacency
matrix of $G\left(n,\,p\right)$. We denote eigenvalues of $A$
by $\lambda_{1}\ge\cdots\ge\lambda_{n}$ and the corresponding unit
eigenvectors by $u_{1}\etc u_{n}$.
With a slight abuse of terminology, we will call them the eigenvectors of the graph $G\left(n,\,p\right)$.
\begin{thm}
\label{thm: bulk} (Bulk case) There is $c\in\left(0,\,1\right)$
such that the following holds. 
Let $\eps,\,\kappa\in\left(0,\,1\right)$.
 Let $G\left(n,\,p\right)$ be an Erd\H{o}s-Rényi
Graph with $p\in\left[n^{-c},\,\frac{1}{2}\right]$.
Let $u_{\a}$ be an eigenvector
 of $G\left(n,\,p\right)$ with $\a\in\left[\kappa n,\,n-\kappa n\right]$.
 Denote by $P$ and $N$ the nodal domains of this eigenvector.
Then there exists $\eta=\eta\left(\eps,\,\kappa\right)>0$ such that,
for a sufficiently large $n$,
\[
\P\left(\left|P\right|\vee\left|N\right|\ge\left(\frac{1}{2}+\eps\right)n\right)\le n^{-\eta}.
\]
\end{thm}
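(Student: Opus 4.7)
The plan is to combine a refined bound on $s_\alpha := \langle \mathbf{1}, u_\alpha\rangle$ with a leave-one-out analysis to establish approximate vertex-wise independence of $\mathrm{sign}(u_\alpha(i))$, and then to aggregate these using a martingale estimate in order to control $\bigl||P|-|N|\bigr|$.

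The first step is to improve the naive bound $|s_\alpha| \le O(1/\sqrt p)$ (which follows from $u_\alpha \perp u_1$ together with the Davis--Kahan proximity of $u_1$ to $\mathbf{1}/\sqrt n$) to $|s_\alpha| \le n^{-c'}$ with polynomially high probability. Writing $A_p = pJ + B$ with $B := A_p - pJ$ and projecting the eigenvalue equation onto $\mathbf{1}/\sqrt n$ yields the identity
\[
s_\alpha\,(\lambda_\alpha - pn) = \sqrt{n}\,\big\langle B\mathbf{1}/\sqrt{n},\, u_\alpha\big\rangle,
\]
with $|\lambda_\alpha - pn| \asymp pn$ for bulk $\alpha$. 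Cauchy--Schwarz recovers only the trivial bound; the improvement should come from the fact that $B\mathbf{1}$ is a centered-degree vector, approximately Gaussian in each coordinate, while $u_\alpha$ is an isotropically delocalized unit vector. A decoupling step (conditioning on the submatrix excluding a small set of rows of $A_p$), combined with the isotropic local law of Erd\H os--Knowles--Yau--Yin (valid for $p \ge n^{-c}$), should yield the desired $|s_\alpha| \le n^{-c'}$.

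The second step uses the tight estimate on $s_\alpha$ to obtain per-vertex sign anti-concentration. For each vertex $i$, the eigenvalue equation reads $\lambda_\alpha u_\alpha(i) = \sum_{j \neq i} A_{ij}\,u_\alpha(j)$. Replacing $u_\alpha$ on the right by the $\alpha$-th eigenvector $u_\alpha^{(i)}$ of the submatrix $A_p^{(i)}$ (obtained by deleting row and column $i$) introduces only a rank-one correction controllable via the Schur complement formula and the no-gaps delocalization. Crucially, $u_\alpha^{(i)}$ is independent of the $i$-th row, so $X_i := \sum_{j \neq i} A_{ij}\,u_\alpha^{(i)}(j)$ is a weighted sum of independent Bernoulli$(p)$ variables with coefficients bounded by $n^{-1/2+o(1)}$ (by $\ell_\infty$ delocalization of $u_\alpha^{(i)}$), mean $\approx p\,s_\alpha$, and variance $\asymp p$. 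Berry--Esseen then gives
\[
\P\bigl(\mathrm{sign}(u_\alpha(i)) = +1 \,\big|\, A_p^{(i)}\bigr) = \tfrac12 + O\bigl(\sqrt p \cdot n^{-c'}\bigr).
\]

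The main obstacle is converting this per-vertex bound into concentration of $|P|-|N| = \sum_i \mathrm{sign}(u_\alpha(i))$, because the signs are strongly correlated through the shared eigenvector structure and a naive CLT is unavailable. The plan is to use a martingale argument along a sequential reveal of the rows of $A_p$: at step $i$, the increment is $\mathrm{sign}(u_\alpha(i))$ centered at its conditional mean, which by the Berry--Esseen estimate has magnitude $O(\sqrt p \cdot n^{-c'})$ in expectation and bounded by $1$ almost surely. The stability of $u_\alpha^{(i)}$ across successive $i$ (comparing consecutive leave-one-out eigenvectors via rank-one perturbation estimates combined with no-gaps delocalization) will ensure that the martingale is well-defined and has controlled increment variance. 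Summing then yields $\bigl||P|-|N|\bigr| \le n^{1-c''}$ with probability at least $1-n^{-\eta}$, which is stronger than the statement of Theorem~\ref{thm: bulk}.
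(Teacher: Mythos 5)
Your strategy has two genuine gaps, and the second one is fatal. First, the leave-one-out comparison does not work in the bulk. Replacing $u_\a$ by the eigenvector $u_\a^{(i)}$ of the minor $A_p^{(i)}$ is \emph{not} a small, rank-one-controllable correction when $\a$ is a bulk index: the eigenvalue spacing there is of order $n^{-1}$ (after normalization), so by interlacing the minor's eigenvalue $\mu_\a^{(i)}$ sits at distance comparable to the gap from several eigenvalues of $A_p$, and the overlap $\iprod{u_\a}{u_\a^{(i)}}$ is generically bounded away from $1$ --- the mass of $u_\a^{(i)}$ spreads over $O(1)$ many neighboring eigenvectors of $A_p$. (This is exactly why the present paper confines the leave-two-out/Schur-complement analysis to the \emph{edge}, where the gap $n^{-2/3-o(1)}$ dominates all error terms, and why it needs level repulsion even there.) So the identification of $\sign(u_\a(i))$ with $\sign(X_i)$ is not justified. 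Incidentally, your step 1 is not where the difficulty lies: since $u_\a\perp u_1$ and $\norm{u_1-\one/\sqrt n}_2\le 2\log n/\sqrt n$, one already gets $|\iprod{\one/\sqrt n}{u_\a}|\le 2\log n/\sqrt n$, far better than $n^{-c'}$.

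The fatal gap is the aggregation step. Even granting $\P(\sign(u_\a(i))=+1\mid A_p^{(i)})=\tfrac12+o(1)$ for every $i$, this per-vertex symmetry says nothing about $\sum_i\sign(u_\a(i))$, because the signs are conditioned on \emph{different} minors and can be strongly positively correlated. Your proposed fix is not a martingale: $\sign(u_\a(i))$ is not measurable with respect to the first $i$ rows of $A_p$, so ``$\sign(u_\a(i))$ minus its conditional mean'' is not an adapted increment. The honest object is the Doob martingale $M_k=\E[\sum_i\sign(u_\a(i))\mid \mathcal F_k]$ along the row filtration, but its increments cannot be bounded by anything useful: modifying one row is a rank-two perturbation of operator norm $\sim\sqrt{pn}$ against a bulk eigenvalue gap $\sim\sqrt{p/n}$, so Davis--Kahan gives no control on how the eigenvector --- and hence a constant fraction of the signs --- responds. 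This is precisely the obstruction the paper circumvents by working with \emph{pairwise} correlations, showing $\E_{(k,l)}\sign(v(k)v(l))=o(1)$ via the Bourgade--Huang--Yau eigenvector statistics together with a polynomial approximation of $\sign$ in a weighted Sobolev norm, and then applying Chebyshev. To repair your argument you would need a quantitative decorrelation of $\sign(u_\a(i))$ and $\sign(u_\a(j))$, which is exactly the missing ingredient your outline never supplies.
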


The proof relies on \emph{quantum unique ergodicity theorem} for random matrices 
\cite[Theorem 1.1]{bourgade2017} claiming that the distribution of the 
inner product of an eigenvector of $A$ and any vector orthogonal 
to $(1 \etc 1)$ is asymptotically normal.
Readers interested in 
 quantum unique ergodicity are also referred to the articles \cite{BY17, BHY19, BYY19}.
For the edge case, i.e., for the eigenvalues close to the edges of
 the spectrum, the bound similar to \cite[Theorem 1.1]{bourgade2017}
has been established only for the non-sparse regime, i.e.,
 for $p\in (0,1)$ which does not depend of $n$, see \cite{BY17}. 
On the other hand, the gaps between
the eigenvalues near the edges of the spectrum are much larger. The
eigenvalue gap is at least $n^{-2/3-o\left(1\right)}$ for edge eigenvalues
while it is of order $n^{-1-o\left(1\right)}$ for bulk eigenvalues.
Also, the edge eigenvalues enjoy stronger rigidity properties than
the bulk ones.
These facts allow to provide a stronger bound for the size of the nodal domains of an edge eigenvector.
\begin{thm}
\label{thm: edge} (Edge case) Let $G\left(n,\,p\right)$ be an Erd\H{o}s-Rényi
Graph with $p\in\left(0,\,1\right)$.
There exists $\rho=\rho(p)>0$
such that the following holds.
 Let $u_{\a}$ be a n on-leading
eigenvector of $G\left(n,\,p\right)$ with $\min\left\{ \a,\,n-\a\right\} \le\left(\log n\right)^{\rho\log\log n}$.
Denote by $P$ and $N$ the nodal domains of this eigenvector. Then,
for any $\eps>0$, there exists $\delta=\delta(\eps)>0$ independent of $n$ and $p$ such that
\[
\P\left(\left|P\right|\vee\left|N\right|\ge\left(\frac{1}{2}+n^{-\frac{1}{6}+\eps}\right)n\right)\le n^{-\delta}.
\]
for a sufficiently large $n$.
\end{thm}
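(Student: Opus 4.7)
Setting $\sigma(j):=\sign(u_\alpha(j))\in\{-1,+1\}$, which is well-defined with probability $1-o(1)$ by the Nguyen--Tao--Vu absence of zero coordinates \cite{Nguyen2017}, we have the combinatorial identity $|P|\vee|N|=\tfrac{n}{2}+\tfrac{1}{2}|\langle\sigma,\mathbf{1}\rangle|$, reducing the theorem to the bound $|\langle\sigma,\mathbf{1}\rangle|\le 2n^{5/6+\eps}$ with probability $\ge 1-n^{-\delta}$. Using $u_\alpha(j)=\sigma(j)|u_\alpha(j)|$, I would split the target via the exact identity
\[
\langle\sigma,\mathbf{1}\rangle \;=\; \sqrt{n}\,\langle u_\alpha,\mathbf{1}\rangle \;+\; E, \qquad E \;:=\; \sum_{j=1}^{n}\sigma(j)\bigl(1-\sqrt{n}\,|u_\alpha(j)|\bigr),
\]
and bound the linear functional $\sqrt{n}\langle u_\alpha,\mathbf{1}\rangle$ and the quantization-error sum $E$ separately.

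The linear term is easy: the orthogonality $u_\alpha\perp u_1$ together with the concentration of the leading eigenvector around the flat direction gives
\[
|\langle u_\alpha,\mathbf{1}\rangle| \;=\; \sqrt{n}\,|\langle u_\alpha,\mathbf{1}/\sqrt{n}-u_1\rangle| \;\le\; \sqrt{n}\,\|u_1-\mathbf{1}/\sqrt{n}\|.
\]
A standard Davis--Kahan estimate for the rank-one mean $\E A=p(\mathbf{1}\mathbf{1}^{T}-I)$ perturbed by the centered Wigner-like $A-\E A$ yields $\|u_1-\mathbf{1}/\sqrt{n}\|\lesssim(np)^{-1/2}$ with high probability, so $\sqrt{n}\,|\langle u_\alpha,\mathbf{1}\rangle|\lesssim p^{-1/2}$, far below the target rate.

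The real work is controlling $E$. A na\"ive Cauchy--Schwarz bound, together with $\sum_j(1-\sqrt{n}|u_\alpha(j)|)^{2}=2n-2\sqrt{n}\|u_\alpha\|_1$, only gives $|E|=O(n)$, since a generic eigenvector satisfies $\|u_\alpha\|_1\asymp\sqrt{n}$ but not $\sqrt{n}-o(\sqrt{n})$; genuine cancellation among the $\sigma(j)$ is required. Here I plan to exploit the two features of the edge spectrum highlighted in the introduction: the edge eigenvalue gap $|\lambda_\alpha-\lambda_{\alpha\pm1}|\ge n^{-2/3-o(1)}$ and the sharper edge rigidity $|\lambda_\alpha-\gamma_\alpha|\le n^{-2/3+o(1)}$ (with $\gamma_\alpha$ the classical location). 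These let me represent the spectral projector as a contour integral $u_\alpha u_\alpha^{T}=\frac{1}{2\pi\i}\oint_{\Gamma}(zI-A)^{-1}\,dz$ along a circle $\Gamma$ of radius $\sim n^{-2/3-o(1)}$ around $\lambda_\alpha$, on which the isotropic edge local law controls $\langle v,(zI-A)^{-1}v\rangle$ for any fixed deterministic $v$; combined with the $\ell^{\infty}$-delocalization $\|u_\alpha\|_\infty\le n^{-1/2+o(1)}$ of \cite{Erdos2012b}, this provides quantitative level-set information on $\sqrt{n}\,|u_\alpha(j)|$.

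The principal obstacle, and the reason the weaker $n^{-1/6}$ scale appears in the conclusion, is that $\sigma$ depends nonlinearly on $u_\alpha$ and therefore cannot be fed into an isotropic bound as the test vector $v$. My plan for bypassing this is a net/union-bound argument over sign patterns: enumerate candidates $v\in\{-1,+1\}^{n}$ according to $|\langle v,\mathbf{1}\rangle|$, and for each $v$ with $|\langle v,\mathbf{1}\rangle|$ above the threshold $2n^{5/6+\eps}$ estimate $\P(\sigma=v)$ by applying the resolvent and local-law bounds to the fixed deterministic $v$. The $2^{n}$ enumeration cost has to be balanced against the per-pattern probability coming from the edge local law, and it is this trade-off --- ultimately governed by the $n^{-2/3}$ edge gap/rigidity scale --- that produces the $n^{-1/6+\eps}$ rate in the statement. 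Executing the balance while keeping all per-$v$ estimates sharp enough to beat the combinatorial cost is, I expect, where the bulk of the technical work will lie.
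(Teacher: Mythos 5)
Your reduction to bounding $|\langle\sigma,\mathbf{1}\rangle|$ is correct, and the treatment of the linear term $\sqrt{n}\langle u_\alpha,\mathbf{1}\rangle$ via orthogonality to the leading eigenvector is fine. But the core of your plan --- the union bound over sign patterns $v\in\{-1,+1\}^n$ --- cannot work, for a quantitative reason you have not confronted. The isotropic local law and the edge rigidity/gap estimates hold with probability at most $1-n^{-D}$ (or, in the strongest available forms, $1-\exp(-(\log n)^{C\log\log n})$); they never produce per-event failure probabilities of order $2^{-n}$. To beat a $2^n$ enumeration you would need exponentially small bounds on $\P(\sigma=v)$ for each bad pattern $v$, and no resolvent or local-law input supplies these. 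Worse, there is no mechanism in your sketch that converts control of $\langle v,(zI-A)^{-1}v\rangle$ for a fixed $v$ into an estimate of the probability that the \emph{sign pattern} of $u_\alpha$ equals $v$: each such event has probability roughly $2^{-n}$ by symmetry, so the whole enumeration is a sum of $2^n$ terms each of size $\approx 2^{-n}$, and the local law gives you no leverage to show that the bad ones are collectively rare. The identity $\langle\sigma,\mathbf{1}\rangle=\sqrt{n}\langle u_\alpha,\mathbf{1}\rangle+E$ also makes no real progress, as you partly acknowledge: all of the difficulty sits in $E$, and the cancellation among the $\sigma(j)$ needed to get $|E|=o(n^{5/6+\eps})$ is exactly the original problem.

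The paper takes an entirely different route: a second-moment bound $\EE\bigl(\sum_i\sign u(i)\bigr)^2=n+\sum_{i\neq j}\EE\,\sign(u(i)u(j))$, which reduces everything to showing $\EE\,\sign(u(i)u(j))=O(n^{-1/3+\eps})$ for a single pair of coordinates. This pairwise correlation is accessed through the block decomposition of the adjacency matrix into a $2\times 2$ block $D$, an $n\times n$ block $B$, and the coupling block $W$, which are mutually independent. After conditioning on a ``typical'' $B$ (level repulsion, rigidity, isotropic delocalization --- established for all principal submatrices simultaneously via an interlacing argument, since the level-repulsion probability is too weak for a union bound), one shows $\sign(u(1)u(2))=\sign(\langle\t w_1,u_\a\rangle\langle\t w_2,u_\a\rangle)$ with the two inner products independent and each positive with probability $\tfrac12+O(n^{-1/3+\eps})$ by Berry--Esseen. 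If you want to salvage your approach, you would need to replace the sign-pattern enumeration with some such pairwise (or low-complexity) correlation structure; as written, the enumeration step is a genuine gap that no available probabilistic input can close.
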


For a vector $u\in \R^{n}$, let $u( i)$ denote its $i$th component.
Our goal
 in both Theorem \ref{thm: bulk} and \ref{thm: edge}
is to show that with high probability,
\[
\sum_{i=1}^{n}\sign \left(u(i)\right)=o\left(n\right)
\]
for an eigenvector $u$ of $A$. This can be derived by Markov
inequality if
\[
  \EE\left(\sum_{i=1}^{n}\sign\left(u\left(i\right)\right)\right)^{2}=o\left(n^{2}\right).
\]
  The latter equation can be derived if for $i\neq j$,
\begin{equation}
\EE\sign\left(u\left(i\right)u\left(j\right)\right)=o\left(1\right).\label{eq:ExpectationOfEntrySign}
\end{equation}

The proof in both the bulk and the edge case
is aiming to show (\ref{eq:ExpectationOfEntrySign}).
Yet, the approaches are completely different.
The proof in the bulk case relies on
\begin{thm}
  \cite[Theorem 1.5]{Rudelson2016}\label{thm: BHY} Fix arbitrary constants
  $\delta ,\kappa>0$ Let $A$ be an $n\times n$ be the adjacency matrix
  of a $G(n,p)$ graph with $n^{-c}\le p\le 1/2$ for some constant $c>0$.
  For $\epsilon > c_1n^{-1/7}$, every eigenvector $v$ of $A$ satisfies
  $$
    \left(\sum_{i\in I} |v(i)|^2 \right)^{1/2}\ge (c_2\epsilon)^6\norm{v}.
  $$
  for all $I\subset [n]$ with $|I|\ge \epsilon n$.
\end{thm}
and
\begin{thm}
    \cite[Theorem 1.1]{bourgade2017}\label{thm: BHY} Fix arbitrary constants
    $\delta ,\kappa>0$ Let $A$ be an $n\times n$ be the adjacency matrix
    of a $G(n,p)$ graph with $n^{-1+\delta{}}\le p\le1/2$. Let $v_{1}\etc v_{n}$
    be its eigenvectors corresponding to the eigenvalues $\lambda_{1}\ge\ldots\ge\lambda_{n}$.
    For any polynomial $f:\R\to\R$ for any $n\ge n(f)$, $\a\in[\kappa n:n-\kappa n]$
    and any $q\in S^{n-1},q\perp (1 \etc 1)$, there exists an
    $\nu>0$ such that
    \[
    |\E f(n\iprod q{v_{ \a}}^{2})-\E f(g^{2})|\le n^{-\nu}.
    \]
\end{thm}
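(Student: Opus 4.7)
The plan is to prove the theorem by the three-step strategy for quantum unique ergodicity of Wigner-type random matrices: an isotropic local law for the resolvent, relaxation of the eigenvector moments under Dyson Brownian motion, and a Green's function comparison to remove the added Gaussian component. I would first reduce to a centered matrix. Decomposing $A = p J + H$ with $J=\one\one^{T}$ and $H := A - pJ$, the matrix $H$ has independent centered entries of variance $p(1-p)$. Since $pJ$ is rank one with range $\mathrm{span}(\one)$, Cauchy interlacing pins the bulk eigenvalues of $A$ to those of $H$ up to an error $O(1/(np))$, and each bulk eigenvector differs from its $H$-counterpart only by a rank-one correction along $\one$. Because $q\perp\one$, that correction drops out of $\iprod{q}{v_{j}}$ to leading polynomial order, so it suffices to prove the statement for $H$.

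Next, I would establish the sparse isotropic local law. Writing $G(z)=(H-z)^{-1}$ and letting $m_{sc}$ be the Stieltjes transform of the (rescaled) semicircle law, the target estimate is
\[
|\iprod{q}{G(E+\i\eta)q} - m_{sc}(E+\i\eta)| \le \frac{n^{\e}}{\sqrt{np\,\eta}}
\]
for bulk energies $E$, scales $\eta\ge n^{-1+\tau}$, and any unit $q\in\one^{\perp}$, with overwhelming probability. This follows from a Schur-complement expansion of the diagonal and off-diagonal resolvent entries together with a fluctuation-averaging bound, refined in the spirit of Erd\H{o}s--Knowles--Yau--Yin so as to absorb the large Bernoulli cumulants. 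Spectral decomposition of the left-hand side then supplies the a priori isotropic delocalization $\sqrt{n}\,|\iprod{q}{v_{j}}|\le n^{\e}$ for bulk $j$, which is the initial datum required for the moment flow.

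I would then introduce the Dyson Brownian motion $H_{t}=e^{-t/2}H + \sqrt{1-e^{-t}}\,W$ with $W$ a GOE matrix restricted to $\one^{\perp}$. The quantities $Q_{t}(j):=\E f(n\,\iprod{q}{v_{j}(t)}^{2})$ satisfy the Bourgade--Yau eigenvector moment flow, a parabolic evolution whose equilibrium makes each coordinate $\iprod{q}{v_{j}}$ Gaussian with variance $1/n$. Combining a maximum-principle argument for this flow with eigenvalue rigidity and the isotropic delocalization of the initial data yields
\[
|Q_{t}(j)-\E f(g^{2})| \le n^{-\nu_{1}} \quad\text{for } t\ge n^{-\tau/2}.
\]
A Green's function comparison theorem closes the loop: Lindeberg-swapping entries of $H$ for those of $e^{t/2}H_{t}$ and Taylor-expanding the resolvent, the first two cumulants match by construction, while the higher-order terms are controlled by products of resolvent entries estimated through the isotropic local law. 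This yields $|Q_{t}(j)-Q_{0}(j)|\le n^{-\nu_{2}}$, so taking $\nu=\min(\nu_{1},\nu_{2})>0$ completes the proof.

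The main obstacle is the sparse regime $p\approx n^{-1+\delta}$: the Bernoulli entries of $H$ have normalized cumulants $\kappa_{k}\sim(np)^{-(k-2)/2}$, so the naive self-consistent equation for $m(z)=\tfrac{1}{n}\Tr G(z)$ picks up non-negligible higher-cumulant corrections, and both the isotropic local law and the Green's function comparison argument must be refined to track them. The rank-one shift from $pJ$ is, by contrast, mild: once we project onto $\one^{\perp}$, its effect on $\iprod{q}{v_{j}}$ enters only through polynomially negligible errors throughout the argument.
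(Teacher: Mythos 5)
This statement is not proved in the paper at all: it is quoted verbatim as Theorem 1.1 of Bourgade--Huang--Yau \cite{bourgade2017} and used as a black box, so there is no internal argument to compare yours against. What you have written is, in effect, a roadmap of the actual proof in the cited reference --- sparse isotropic local law, eigenvector moment flow, Green's function comparison --- and at that level of description it is the right strategy. But it is a roadmap rather than a proof: each of the three pillars is itself a substantial theorem, and the sparse regime $p\sim n^{-1+\delta}$ (which you correctly flag as the main obstacle) is exactly where the standard local law and the four-moment comparison break down, because the normalized cumulants $\kappa_k\sim (np)^{-(k-2)/2}$ are no longer negligible; the phrase ``must be refined to track them'' is where essentially all of the work of \cite{bourgade2017} lives.

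One concrete step is wrong as stated: the reduction from $A$ to the centered matrix $H=A-pJ$. A rank-one perturbation does not move an eigenvector by ``a rank-one correction along $\one$'': to first order $v_j(A)$ is a combination of \emph{all} eigenvectors of $H$ with coefficients proportional to $\iprod{\one}{v_k(H)}/\big(\lambda_k(H)-\lambda_j(A)\big)$, and in the bulk these denominators are of order $1/n$, so the orthogonality $q\perp\one$ does not by itself kill the correction to $\iprod q{v_j}$. Controlling it requires delocalization of the overlaps $\iprod{\one}{v_k(H)}$ together with a quantitative resolvent expansion --- i.e.\ essentially the isotropic local law you are trying to establish --- which is why \cite{bourgade2017} carries the nonzero mean through the whole analysis (their hypothesis $q\perp(1,\dots,1)$ exists precisely for this reason) instead of removing it by naive perturbation theory at the outset. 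If you intend this as a genuine proof rather than a summary of the literature, that reduction and the three quoted pillars all need to be supplied.
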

The last theorem allows to estimate $\EE\sign\left(u\left(i\right)u\left(j\right)\right)$ by replacing $u\left(i\right)$ and $u\left(j\right)$ by independent normal random variables. Yet, this replacement is not straightforward. First, we have to transform the statement of  Theorem \ref{thm: BHY} involving $\iprod q{u}^{2}$ into a one involving $u(i)$ and $u(j)$.
Secondly, and more importantly, we have to approximate the function $\sign(\cdot)$ by a polynomial. Since the polynomial function is unbounded on $\R$, we have to find an approximation which is close to the function $\sign(\cdot)$ point-wise on the set $[-R,R] \setminus (-\delta,\delta)$ with some $0<\delta<1<R$, and at the same time has a controlled growth at infinity.
The latter property is needed to  guarantee that the contribution of the values $u(i) \notin [-R,R]$ does not affect quality of the approximation. The contribution of the values $u(i) \in (-\delta,\delta)$ can be made small by choosing an appropriate $\delta$ due to the no-gaps delocalization.

For the edge case, we represent the adjacency matrix
$A$ in block form:
\[
\left[\begin{array}{cccc}
D &  & W^{\top}\\
\\
W &  & B\\
\\
\end{array}\right]
\]
where $B$ is $n-2$ by $n-2$, $D$ is $2$ by $2$, and $W$ is $n-2$ by $2$.
These matrices are independent. Moreover, using the results of \cite{EJP3054, Erdos2012b, Knowles2013a}, we show that  with high probability, the matrix $B$ has ``typical`` spectral properties.
Relying on the independence of the blocks, it is possible to bound the expectation of $\sign\left(u\left(1\right)u\left(2\right)\right)$ conditioned on the event that $B$ is typical. To use this approach for other pairs of coordinates, we have to show that with high probability, all  $(n-2) \times (n-2)$ principal submatrices of $A$ are typical. This cannot be derived from the union bound since one of the typical properties, namely the level repulsion, holds with probability $1- O(n^{-\delta})$ for some $\delta>0$. To overcome this problem, we condition on the event that the matrix $A$ itself is typical, and show that  on this event, with high probability, all $(n-2) \times (n-2)$ blocks are typical as well.

\subsection{Acknowledgement}

Part of this research was performed while the authors were in residence at the Mathematical Sciences Research Institute  (MSRI) in Berkeley, California, during the Fall semester of 2017,
and at the Institute for Pure and Applied Mathematics (IPAM) in Los Angeles, California, during May and June of 2018.
Both institutions are supported by the National Science Foundation.
Part of this research was performed while the second author visited Weizmann Institute of Science in Rehovot, Israel, where he held Rosy and Max Varon Professorship. We are grateful to all these institutes for their hospitality and for creating an excellent work environment.

The research of the second author was supported in part by the NSF grant DMS 1807316 and by a fellowship from the Simons Foundation.

The  authors are grateful  to Paul Bourgade, Nick Cook, Terence Tao,
and Jun Yin for  fruitful discussions and remarks which led to improved presentation of some of the results.
We also thank the referees for many valuable remarks and suggestions.

\subsection{Notation}
First, $c,c',C,C'$ will denote constants which may change from line to line.
For a positive integer $n$, denote $[n]:=\{1,\,2,\,3,\dots,\,n\}$.
For vectors $u,\, v\in \R^{n}$, let
$\left\| u \right\|_2$ denote the Euclidean norm of $u$,
$\left\|  u\right\|_\infty$ denote the $l_\infty$ norm of $u$,
and $\iprod u{v}$ denote the standard inner product of $u$ and $v$.
The cardinality of a set $S$ will be denoted by $|S|$. For $a,b \in \R$, the notation $a \wedge b$ and $a \vee b$ stands for the minimum and the maximum of $a$ and $b$ respectively.

For a random variable $Z$, we denote its $\psi_2$ norm by $\norm{Z}_{\psi_2}$. The $\psi_2$ norm is defined by the equation
$$
  \EE \exp\left(
    \left(
        \frac{|Z|}{\norm{Z}_{\psi_2}}
    \right)^2
  \right)=2.
$$
We say $Z$ is subgaussian if $\norm{Z}_{\psi_2}$ exists.
By subgaussian vector we mean a random vector with independent
components whose $\psi_2$ norms are uniformly bounded.

Let $\mathbf{Mat}_{sym}(n)$ be the collection of all symmetric
$n\times n$ matrices.
For a symmetric $n\times n$ matrix $H=\{h_{ij}\}_{i,j=1}^n$,
let $\norm{H}$ denote its operater norm,
$\norm{H}_{HS}$ denotes its Hilbert-Schmidt norm.
Precisely,
\[
  \norm{H}_{HS}^2 = \sum_{i,j=1}^n h_{ij}^2 = \sum_{i=1}^n \lambda_i^2,
\]
where $\{\lambda_i\}_{i=1}^n$ are eigenvalues of $H$. Furthermore,
 for $z\in \mathbb{C}$ with $\Im z > 0$,
\[
  G(z) = \frac{1}{H-z}
\]
denote the Green function of $H$, and define the Stieltjes Transform of $H$ by
\[
  m(z) = \frac{1}{n} \Tr(G(z))= \frac{1}{n}\sum_{i=1}^n \frac{1}{\lambda_i-z}
\]
where $\{\lambda_i\}_{i=1}^n$ are eigenvalues of $H$.

Recall the semicircle-law
\[
  \rho_{sc}(x)= \frac{1}{2\pi}\sqrt{\left( 4-x^2 \right)_+},
\]
where $\left( 4-x^2 \right)_+=\max\{4-x^2,\,0\}$. The semicircle law proved in the classical paper of Wigner \cite{Wig1955} is the limit distribution of the empirical distribution
of eigenvalues of Wigner matrices, see e.g. \cite{anderson_guionnet_zeitouni_2009} for the precise formulation and extensions.
The Stieltjes transform of $\rho_{sc}$ is
\[
  m_{sc}(z)= \int_\R \frac{\rho_{sc}(x)}{x-z}\d x.
\]
For a fixed $n$, let $\gamma_i$ be the expected location of $i-$th eigenvalue
(rearranged in a non-increasing order)
according to the semicircle law. That is, $\gamma_i$ satisfies
\[
  \int_{\gamma_i}^2 \rho_{sc}(x) \d x = \frac{i}{n}.
\]
Furthermore, it is easy to check that for $i = o(n)$, we have
\begin{equation}
  \left(\pi\frac{i}{n}\right)^{2/3}
  \le 2-\gamma_i
  \le \left(3\pi\frac{i}{n}\right)^{2/3}.
  \label{eq:gammaalpha}
\end{equation}

\section{Bulk eigenvector}

Consider a graph $G$ with $n$ vertices, and denote by $A$ its adjacency matrix.
  Let
$\lambda_1\ge \lambda_2\ge \cdots \lambda_n$ be the eigenvalues of $A$ and let
$v_\a$ be the unit eigenvector corresponding to $\lambda_\a$.
In order to show that $\sum_{i=1}^n \sign \big( v_\a(i) \big) =o(n)$,
consider a random pair  $(i,j) \subset [n]$ of distinct indices
 which is uniformly chosen among all such pairs. We will check below that if
 $\E \text{sign}(v_{\a}(i)\cdot v_{\a}(j))=o(1)$, then this inequality holds, and
 the nodal domains are of the size close to $n/2$.
We are going to establish this bound for the adjacency matrix of a typical $G(n,p)$ graph.
Since $\sign$ is not a continuous function, it is hard to approach this task directly.
Instead,  we will approximate the function $\sign$ by a suitable
polynomial $f$ and show that
  $\E \big[ f(v_\a(i)\cdot v_\a(j)) \mid A \big]=o(1)$
where the expectation is taken with respect to the random pair $(k,l)$ and  $A$ is the adjacency matrix of a typical $G(n,p)$ graph, i.e., it is chosen from some set of adjacency matrices whose probability is $1-o(1)$.
After that, we will have to estimate the error of this approximation.
 To implement the first step,
we will use Theorem \ref{thm: BHY} to derive a similar bound for
the expectation of an even polynomial of four random coordinates of
the eigenvector.
This will lead to a stronger bound for an even polynomial of two random coordinates.
Finally, applying the latter bound to a one-variable polynomial of the product of two  coordinates, we will get the desired estimate.

 Let us formulate this statement precisely.
Let
  $v_\a \in S^{n-1}$
be a bulk eigenvector of the $G(n,p)$ graph, and let $g_{1}\etc g_{n}\sim N(0,1)$
be independent standard normal random variables.
Denote by
$\E_{(i,j)}$ the expectation with respect to the random
pair of coordinates $(k,l)$, where the matrix $A$ is regarded as  fixed.
\begin{lem}
\label{lem: two coord} Let
 $A,v_\a$
 be as in Theorem \ref{thm: BHY}.
Let $(k,l)$ be a uniformly chosen random pair of elements of $[n]$.
For any \emph{even} polynomial $F:\R^{2}\to\R$, there exists a $\nu>0$
and a set $\mathcal{A}_{F}\in\mathbf{Mat}_{sym}(n)$ such that for
all sufficiently large $n$,
\[
\P(A\in\mathcal{A}_{F})\ge1-n^{-\nu},
\]
and for any $A\in\mathcal{A}_{F}$,
\[
|\E_{(k,l)}F(n^{1/2}
v_\a
(k),n^{1/2}
v_\a(l))-\E F(g_{1},g_{2})|\le n^{-\nu}.
\]
\end{lem}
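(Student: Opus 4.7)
The plan is to extract the desired two-coordinate identity from Theorem \ref{thm: BHY} by polarization, and then upgrade the resulting in-expectation statement to an in-probability one via a variance bound. First, for any orthonormal family $q_{1},\ldots,q_{r}\in S^{n-1}\cap\mathbf{1}^{\perp}$ and any exponent vector $(a_{1},\ldots,a_{r})$ with $s:=\sum_{i}a_{i}$ even, I would apply Theorem \ref{thm: BHY} to the unit vector $q_{\alpha}:=\sum_{i}\alpha_{i}q_{i}$ for every $\alpha\in S^{r-1}$, with the monomial $f(t)=t^{s/2}$. Since $\sum_{i}\alpha_{i}g_{i}\sim N(0,1)$ when $\alpha\in S^{r-1}$, this gives
\[
n^{s/2}\,\E_{A}\Bigl(\sum_{i}\alpha_{i}\iprod{q_{i}}{v_{j}}\Bigr)^{s}=\E\Bigl(\sum_{i}\alpha_{i}g_{i}\Bigr)^{s}+O(n^{-\nu}),
\]
a homogeneous polynomial identity in $\alpha$ of degree $s$. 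Extending homogeneously from $S^{r-1}$ to $\R^{r}$ and matching monomial coefficients on both sides yields
\[
n^{s/2}\,\E_{A}\prod_{i}\iprod{q_{i}}{v_{j}}^{a_{i}}=\prod_{i}\E g_{i}^{a_{i}}+O(n^{-\nu}),
\]
with $g_{1},\ldots,g_{r}$ i.i.d.\ $N(0,1)$; note that the right side automatically vanishes whenever any $a_{i}$ is odd.

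Next, I would specialize the polarization identity by choosing $q_{1},\ldots,q_{r}$ to be the Gram--Schmidt orthonormalization of $\tilde{e}_{k_{i}}:=(e_{k_{i}}-\mathbf{1}/n)/\sqrt{1-1/n}$ for fixed distinct indices $k_{1},\ldots,k_{r}\in[n]$. The pairwise inner products $\iprod{\tilde{e}_{k_{i}}}{\tilde{e}_{k_{i'}}}$ are $O(1/n)$, so the Gram--Schmidt corrections satisfy $\|q_{i}-\tilde{e}_{k_{i}}\|=O(r/n)$; and
\[
\iprod{\tilde{e}_{k_{i}}}{v_{j}}=\frac{v_{j}(k_{i})-\bar{v}_{j}}{\sqrt{1-1/n}},\qquad\bar{v}_{j}:=\frac{1}{n}\iprod{v_{j}}{\mathbf{1}},
\]
with $|\bar{v}_{j}|=O(n^{-1})$ on a typical event, since $v_{j}\perp u_{1}$ and $u_{1}\approx\mathbf{1}/\sqrt{n}$. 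Combined with the $\ell_{\infty}$-delocalization $\|v_{j}\|_{\infty}\le n^{-1/2+o(1)}$, these estimates let one absorb the discrepancy between $\iprod{q_{i}}{v_{j}}$ and $v_{j}(k_{i})$ into the error term, so that
\[
n^{s/2}\,\E_{A}\prod_{i}v_{j}(k_{i})^{a_{i}}=\prod_{i}\E g_{i}^{a_{i}}+O(n^{-\nu'}),
\]
uniformly in distinct tuples $(k_{1},\ldots,k_{r})$ and bounded multi-indices.

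Writing the given even polynomial as $F(x,y)=\sum_{a,b}c_{a,b}\,x^{a}y^{b}$ (only monomials with $a+b$ even appear), applying the previous identity with $r=2$ and averaging over the random pair gives $\E_{A}\E_{(k,l)}F(\sqrt{n}v_{j}(k),\sqrt{n}v_{j}(l))=\E F(g_{1},g_{2})+O(n^{-\nu''})$. To upgrade from expectation to in-probability, I would compute the second moment
\[
\E_{A}\bigl(\E_{(k,l)}F(\sqrt{n}v_{j}(k),\sqrt{n}v_{j}(l))\bigr)^{2}=\E_{A}\E_{(k,l,k',l')}F(\sqrt{n}v_{j}(k),\sqrt{n}v_{j}(l))\,F(\sqrt{n}v_{j}(k'),\sqrt{n}v_{j}(l')),
\]
whose integrand is an even polynomial in four coordinate evaluations. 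The same step with $r=4$, together with the Gaussian factorization $\E\prod_{i}g_{i}^{a_{i}}=\prod_{i}\E g_{i}^{a_{i}}$, shows that the right side equals $(\E F(g_{1},g_{2}))^{2}+O(n^{-\nu'''})$, so $\mathrm{Var}_{A}\bigl(\E_{(k,l)}F\bigr)=O(n^{-\nu'''})$. Markov's inequality then produces a set $\mathcal{A}_{F}\subset\mathbf{Mat}_{sym}(n)$ of probability at least $1-n^{-\nu}$ on which $|\E_{(k,l)}F-\E F(g_{1},g_{2})|\le n^{-\nu}$ for a suitable $\nu>0$.

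The main obstacle is the quantitative error control in the second step, where the passage from the test-vector inner products $\iprod{q_{i}}{v_{j}}$ to the coordinate evaluations $v_{j}(k_{i})$ must be accurate enough that, after multiplication by $n^{s/2}$, the residual remains $o(n^{-\nu})$. This rests on the $\ell_{\infty}$-delocalization of $v_{j}$ together with rigidity estimates for the leading eigenvector $u_{1}$ (needed to control $\bar{v}_{j}$). A secondary technical point is the handling of $O(1/n)$-probability degenerate configurations of the random indices in the two- and four-coordinate averages, which only contribute lower-order corrections.
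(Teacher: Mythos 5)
Your proposal is correct and follows essentially the same route as the paper: polarize Theorem \ref{thm: BHY} to obtain joint coordinate moments, handle the non-orthogonality to $(1,\dots,1)$ via the closeness of the leading eigenvector to $\mathbf{1}/\sqrt{n}$, then bound the variance of the pair-average using the four-coordinate version and conclude by Chebyshev/Markov. The only (cosmetic) difference is that you pre-project and Gram--Schmidt the coordinate vectors against $\mathbf{1}$, whereas the paper works directly with sparse test vectors $q$ and splits off the $\overrightarrow{1}$-component afterward, controlling the cross terms by moment bounds rather than $\ell_\infty$-delocalization.
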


\begin{proof}
The proof breaks in two parts. First, we will show that the statement
of Theorem \ref{thm: BHY} holds for any $q\in S^{n-1}$ such that
$|\text{supp}(q)|\le4$.  It is enough to prove the statement for $f(x)=x^{d}$.
Without loss of generality, assume that $q=\sum_{j=1}^{4}\a e_{j}$
with $\sum_{j=1}^{4}\a_{j}^{2}=1$. Set $\b:=\iprod{\one}q=n^{-1/2}\sum_{j=1}^{4}\a_{j}$.
Then
\begin{equation}
|\b|\le\frac{4}{\sqrt{n}},\quad q_{0}:=q-\b\overrightarrow{1}\perp\overrightarrow{1}\quad\text{and}\quad\norm{q_{0}}_{2}=1+O(n^{-1/2}).\label{eq: q_0}
\end{equation}
Recall that $w:=\overrightarrow{1}-v_{1}$ satisfies
\begin{equation}
\norm w_{2}\le2\frac{\log n}{\sqrt{n}},\label{eq: first eigenvector}
\end{equation}

see \cite[Theorem 3]{Rudelson2016}.

Let us check that for any $d\in\N$,
\[
\E(n\iprod q{v_{\a}}^{2})^{d}\le C(d)
\]
for some function $C(d)>0$. Indeed, since
$\pr{\overrightarrow{1}}{v_{\a}}=\pr w{v_{\a}}$,
\begin{align*}
\E(n\iprod q{v_{\a}}^{2})^{d}
&=\E(n\iprod{q_{0}+\b\sqrt{n}w}{v_{\a}}^{2})^{d}
\le2^{2d}\left(\E(n\iprod{q_{0}}{v_{\a}}^{2})^{d}+\b^{2d}n^{d}\norm w_{2}^{2d}\right)\\
&\le2^{2d}\left(\E(2g_{1}^{2})^{d}+\left(16\frac{\log^{2}n}{n}\right)^{d}\right)\le C(d).
\end{align*}
where we used (\ref{eq: q_0}), (\ref{eq: first eigenvector}) and
Theorem \ref{thm: BHY} in the second inequality. By Cauchy-Schwarz
inequality, this means that for any $k\in\N$,
\begin{equation}
\E|\sqrt{n}\pr q{v_{\a}}|^{k}\le C'(k).\label{eq: bounded moment}
\end{equation}
Therefore, for any $d\in\N$,
\begin{align*}
\left|\E(n\iprod q{v_{\a}}^{2})^{d}-\E g^{2d}\right|
&\le\left|\E(n\iprod q{v_{ \a}}^{2})^{d}-\E(n\iprod{\frac{q_{0}}{\norm{q_{0}}_{2}}}{v_{ \a}}{}^{2})^{d}\right|+\left|\E(n\iprod{\frac{q_{0}}{\norm{q_{0}}_{2}}}{v_{ \a}}^{2})^{d}-\E g^{2d}\right|\\
&\le\left|\E(n\iprod q{v_{ \a}}^{2})^{d}-\frac{1}{\norm{q_{0}}_{2}^{2d}}\E(n\iprod{q-\b\overrightarrow{1}}{v_{ \a}}^{2})^{d}\right|+n^{-\nu}\\
&\le\left|\E(n\iprod q{v_{ \a}}^{2})^{d}-\E(n\iprod{q-\b w}{v_{ \a}}^{2})^{d}\right|+2n^{-\nu}\\
&\le\sum_{j=1}^{n}\binom{2d}{j}\E|\sqrt{n}\iprod q{v_{ \a}}|^{2d-j}\cdot\left(8\frac{\log n}{\sqrt{n}}\right)^{j}+2n^{-\nu}\le n^{-\nu'}
\end{align*}
for large $n$. Here, the third inequality follows from Theorem \ref{thm: BHY},
the fourth one from (\ref{eq: q_0}) and (\ref{eq: first eigenvector}),
and the last one from (\ref{eq: bounded moment}). This shows that
the conclusion of Theorem \ref{thm: BHY} holds for any $q\in S^{n-1}$
supported on four coordinates. The same argument can be used to prove
this statement for any fixed number of coordinates, but we would not
need it here.

Let us extend the conclusion of Theorem \ref{thm: BHY} to even polynomials
of four variables. Consider an even monomial $G(x_{1}\etc x_{4}):=x_{1}^{d_{1}}\cdot x_{2}^{d_{2}}\cdot x_{3}^{d_{3}}\cdot x_{4}^{d_{4}}$
with $d=d_{1}+d_{2}+d_{3}+d_{4}\in2\N$. Note that for this monomial,
$G(\sqrt{n}v_{\a}(k_{1})\etc\sqrt{n}v_{\a}(k_{4}))$ can be represented
as a finite linear combination of $(\sqrt{n}\pr q{v_{\a}})^{d}$ for
different values of $q\in S^{n-1},\text{supp}(q)\subset\{k_{1}\etc k_{4}\}$.
Hence,

\begin{equation}
\left|\E G(\sqrt{n}v_{\a}(k_{1})\etc\sqrt{n}v_{\a}(k_{4}))-\E G(g_{1}\etc g_{4})\right|\le n^{-\nu}\label{eq: 4 variables}
\end{equation}
and this inequality can be extended to all even polynomials of four
variables.

Now, let $F:\R^{2}\to\R$ be an even polynomial. Let $s\in[\kappa n:n-\kappa n]$.
For a pair $(i,j)\in\binom{[n]}{2}$, define a random variable
\[
Y_{(i,j)}=F(\sqrt{n}v_{ \a}(i),\sqrt{n}v_{\a}(j))-\E F(g_{i},g_{j}),
\]
where $g_{1}\etc g_{n}$ are independent $N(0,1)$ random variables.
Then for any distinct $i,j,k,l,\in[n]$,
\begin{align*}
|\E Y_{(i,j)}Y_{(k,l)}|
&=|\E F(\sqrt{n}v_{\a}(i),\sqrt{n}v_{\a}(j))F(\sqrt{n}v_{ \a}(k),\sqrt{n}v_{\a}(l)) \\
& \quad -\E F(\sqrt{n}v_{\a}(i),\sqrt{n}v_{\a}(j))\E F(g_{k},g_{l})-\E F(g_{i},g_{j})\E F(\sqrt{n}v_{\a}(k),\sqrt{n}v_{\a}(l))\\
& +\E F(g_{i},g_{j})\E F(g_{k},g_{l})|\\
&\le|\E F(g_{i},g_{j})F(g_{k},g_{l})-2\E F(g_{i},g_{j})\cdot\E F(g_{k},g_{l})+\E F(g_{i},g_{j})F(g_{k},g_{l})|
+n^{-\nu} \\
&=n^{-\nu},
\end{align*}
where we used (\ref{eq: 4 variables}) with $G_1(x_{1},x_{2},x_{3},x_{4})=F(x_{1},x_{2})F(x_{3},x_{4}), \ G_2(x_{1},x_{2},x_{3},x_{4})=F(x_{1},x_{2})$, and
$ G_3(x_{1},x_{2},x_{3},x_{4})=F(x_{3},x_{4})$
to derive the inequality.
A similar calculation shows that $|\E Y_{(i,j)}Y_{(k,l)}| =O(1)$ when $i,j,k,l$ are not necessarily distinct.
Hence,
\[
\E\left(\frac{1}{\binom{n}{2}}\sum_{(i,j)\in\binom{[n]}{2}}Y_{(i,j)}\right)^{2}\le\frac{1}{\binom{n}{2}^{2}}\sum_{(i,j,k,l)\in\binom{[n]}{4}}\E Y_{(i,j)}Y_{(k,l)}+O(n^{-1})\le n^{-\nu}.
\]
The Markov inequality implies that there exists a set $\mathcal{A}_{F}'\in\mathbf{Mat}_{sym}(n)$
such that for all sufficiently large $n$,
\[
\P(A\in\mathcal{A}_{F}')\ge1-n^{-\nu/2},
\]
and for any $A\in\mathcal{A}_{F}'$,
\[
\left|\frac{1}{\binom{n}{2}}\sum_{(i,j)\in\binom{[n]}{2}}F(\sqrt{n}v_{ \a}(i),\sqrt{n}v_{\a}(j))-\E F(g_{1},g_{2})\right|
=\left|\frac{1}{\binom{n}{2}}\sum_{(i,j)\in\binom{[n]}{2}}Y_{(i,j)}\right|\le n^{-\nu/4}.
\]
The lemma is proved.
\end{proof}
Applying the previous lemma to a polynomial $F(x,y)=f(x\cdot y)$
for a one-variable polynomial $f$, we derive the following corollary.
\begin{cor}
\label{cor: product of coord} Let $A,v_{\a}$ be as in Theorem \ref{thm: BHY}.
Let $(k,l)$ be a uniformly chosen random pair of elements of $[n]$.
For any polynomial $f:\R\to\R$, there exists a $\nu>0$ and a set
$\mathcal{A}_{f} \subset \mathbf{Mat}_{sym}(n)$ such that for all sufficiently
large $n$,
\[
\P(A\in\mathcal{A}_{f})\ge1-n^{-\nu},
\]
and for any $A\in\mathcal{A}_{f}$,
\[
|\E_{(k,l)}f(nv_{\a}(k)\cdot v_{\a}(l))-\E f(g_{1}g_{2})|\le n^{-\nu}.
\]
\end{cor}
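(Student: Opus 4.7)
The plan is to apply Lemma \ref{lem: two coord} directly to the two-variable polynomial $F(x,y) := f(xy)$, and the only thing to check is that $F$ qualifies as an "even" polynomial in the sense used there. Writing $f(t) = \sum_{k=0}^{d} a_{k} t^{k}$, we obtain $F(x,y) = \sum_{k=0}^{d} a_{k} x^{k} y^{k}$, and every monomial $x^{k} y^{k}$ has total degree $2k \in 2\N$. Hence $F$ is a finite linear combination of even monomials, which is precisely the class for which Lemma \ref{lem: two coord} was proved (cf. the reduction to even monomials $x_{1}^{d_{1}}\cdots x_{4}^{d_{4}}$ with $\sum d_{i} \in 2\N$ in its proof).

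Once this is verified, the corollary is immediate. Lemma \ref{lem: two coord} applied to $F$ yields some $\nu>0$ and a set $\mathcal{A}_{F} \subset \mathbf{Mat}_{sym}(n)$ with $\P(A \in \mathcal{A}_{F}) \ge 1 - n^{-\nu}$ such that, for all $A \in \mathcal{A}_{F}$,
\[
\bigl| \E_{(k,l)} F\bigl(\sqrt{n}\, v_{j}(k),\, \sqrt{n}\, v_{j}(l)\bigr) - \E F(g_{1}, g_{2}) \bigr| \le n^{-\nu}.
\]
Setting $\mathcal{A}_{f} := \mathcal{A}_{F}$ and using the definition of $F$, the left argument becomes $f\bigl(\sqrt{n}\, v_{j}(k) \cdot \sqrt{n}\, v_{j}(l)\bigr) = f\bigl(n\, v_{j}(k) v_{j}(l)\bigr)$, while $F(g_{1}, g_{2}) = f(g_{1} g_{2})$. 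This gives exactly the claimed estimate.

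There is no genuine obstacle here: the corollary is a one-line specialization of Lemma \ref{lem: two coord}. The only thing worth flagging is that the parity condition ("even polynomial") was essential in the lemma because only even monomials of $\sqrt{n}\,v_{j}$ can be expanded in terms of $(\sqrt{n}\, \iprod{q}{v_{j}})^{d}$ for $d$ even, to which Theorem \ref{thm: BHY} applies; one-variable polynomials $f$ placed inside $f(xy)$ automatically meet this requirement because the product $xy$ already contributes an even total degree per factor, so no additional hypothesis on $f$ (such as $f$ being even) is needed.
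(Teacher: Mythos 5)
Your proposal is correct and is exactly the paper's argument: the corollary is obtained by applying Lemma \ref{lem: two coord} to $F(x,y)=f(xy)$, and your verification that every monomial $a_k x^k y^k$ has even total degree $2k$ (so that $F$ is even in the sense required by the lemma) is the only point that needs checking.
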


To prove that the nodal domains are balanced, we will use Corollary
\ref{cor: product of coord} with $f$ being an \emph{odd} polynomial approximating
$\text{sign}(x)$ on some interval $[r,R]$. Since $f$ is odd, $\E f(g_{1}g_{2})=0$.
Hence, assuming that the nodal domains are unbalanced, it would be
enough to show that $|\E_{(k,l)}f(nv_{\a}(k)\cdot v_{\a}(l))|$ is non-negligible
to get a contradiction. The values of $r$ and $R$ will be chosen
so that the absolute values of most of the coordinates will fall into
this interval. A simple combinatorial calculation will show that if
the nodal domains are unbalanced, then $\E_{(k,l)}\text{sign}(v_{ \a}(k)\cdot v_{\a}(l))=\Omega(1)$.
Indeed, assume that for a given matrix $A$ and vector $v_j$,
\[
|P| \vee|N| \ge \left( \frac{1}{2} + \eps \right).
\]
Then
\[
 \E_{(k,l)}\text{sign}(v_{\a}(k)\cdot v_{\a}(l))
 = \binom{n}{2}^{-1} \cdot \left[ \binom{|P|}{2} + \binom{|N|}{2}-|P| \cdot |N| \right]
 \ge 4 \eps^2 +O(n^{-1}).
\]
This reduces our task to the comparison between this quantity and
$|\E_{(k,l)}f(nv_{ \a}(k)\cdot v_{\a}(l))|$. To achieve it, we construct
$f$ approximating $\sign(x)$ pointwise on the set $[-R,-r]\cup[r,R]$
and show that the contribution of the coordinates falling outside
of this set is negligible. For the interval $(-r,r)$, this will be
done using the no-gaps delocalization. Handling the set $(-\infty,-R)\cup(R,\infty)$
is more delicate. Since the polynomial is unbounded on this set, we
will control the $L_{2}$ norm of $f$ and use the Markov inequality.
This argument requires constructing the polynomial $f$ which approximates
$\text{sign}(x)$ in two metrics simultaneously: uniformly on the
set $[-R,-r]\cup[r,R]$ and in $L_{2}(\mu)$ norm on $\R$. The measure
$\mu$ here will be the probability measure on $\R$ defined by
\[
\mu(B)=\P(g_{1}g_{2}\in B).
\]

Instead of controlling two metrics at the same time, we will introduce one Sobolev norm which will be stronger than both metrics.
Such norm can be chosen in many different ways.
We will chose a particular way which makes the argument shorter.

Let $\eta:\R \setminus \{0\}\to(0,\infty)$ and $\psi:\R\to(0,\infty)$ be even
functions such that
\begin{itemize}
\item $\eta\in C^{1}((0,\infty))$, $\psi\in C^{1}(\R)$;

\item $\eta(x),\psi(x)=\frac{1}{\pi}\exp(-x/2)$ for all $x\ge2$;

\item $\eta(x)\ge\phi(x)$ for all $x>0$, and $\eta \in L_1(\R)$.
\end{itemize}
Consider a weighted Sobolev space $H$ defined as the completion of
the space of $C^{1}(\R)$ functions for which the norm
\[
\norm f_{H}^{2}:=\int_{\R}f^{2}(x)\eta(x)\,dx+\int_{\R}(f'(x))^{2}\psi(x)\,dx
\]
is finite. Note that $H\subset C\left(\R\right)$. Indeed, for any
$M>0$, $a<b,\,a,b\in[-M,M]$ and any $f\in C^{1}\left(\R\right)$,
\begin{align} \label{eq: (1/2)-Lipschtz}
|f(b)-f(a)|
&=\left|\int_{a}^{b}f'(a)\,dx\right|\le\left(\min_{x\in[-M,M]}\psi(x)\right)^{-1}\cdot\int_{a}^{b}|f'(x)|\psi(x)\,dx \notag\\
&\le\left(\min_{x\in[-M,M]}\psi(x)\right)^{-1}\cdot\left(\int_{a}^{b}(f'(x))^{2}\psi(x)\,dx\right)^{1/2}\left(\int_{a}^{b}\psi(x)\,dx\right)^{1/2}\\
&\le\left(\min_{x\in[-M,M]}\psi(x)\right)^{-1}\cdot\norm f_{H}\cdot\left(\max_{x\in[-M,M]}\psi(x)\right)^{1/2}\cdot(b-a)^{1/2},\label{eq: cont} \notag
\end{align}
and the same inequality holds for the completion.

We will need the following lemma.
\begin{lem} \label{lem: density}
Let $h\in C^{1}(\R)$ be an odd function such that $\norm h_{\infty}+\norm{h'}_{\infty}<\infty$.
Then  for any $\delta>0$, there exists an odd polynomial $Q$ satisfying
$\norm{Q-h}_H<\delta$.
\end{lem}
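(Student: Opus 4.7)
The plan is to approximate $h$ in the $H$-norm in two stages: smoothly truncate $h$ to get a compactly supported function, and then produce an odd polynomial by antidifferentiating a polynomial approximation of the derivative in an appropriately weighted $L^{2}$-norm.

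For the truncation step I fix an even cutoff $\chi_{M}\in C^{\infty}(\R,[0,1])$ with $\chi_{M}\equiv 1$ on $[-M,M]$, $\chi_{M}\equiv 0$ outside $[-M-1,M+1]$, and $\norm{\chi_{M}'}_{\infty}\le 2$, and set $h_{M}:=h\chi_{M}$. This $h_{M}$ is odd, $C^{1}$, supported in $[-M-1,M+1]$, with uniform bounds on $h_{M}$ and $h_{M}'$ in terms of $\norm{h}_{\infty}$ and $\norm{h'}_{\infty}$. Since $h-h_{M}$ and its derivative are bounded and supported in $\{|x|\ge M\}$, and both $\eta$ and $\psi$ lie in $L^{1}(\R)$ with exponential tails, $\norm{h-h_{M}}_{H}\to 0$ as $M\to\infty$, so I pick $M$ with $\norm{h-h_{M}}_{H}<\delta/2$.

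For the polynomial step I observe that $h_{M}'$ is continuous, even (as the derivative of an odd function), and compactly supported. I work with the measure $d\nu(t):=(1+t^{2})\psi(t)\d t$; since $\psi(t)=\pi^{-1}e^{-|t|/2}$ for $|t|\ge 2$, one has $\int_{\R}e^{\lambda|t|}\d\nu(t)<\infty$ for every $\lambda\in(0,1/2)$, and a standard Fourier/moments argument (tested against analytic continuations of $\int e^{izt}\,f(t)\d\nu(t)$) gives density of polynomials in $L^{2}(\R,\nu)$. Symmetrizing an approximating polynomial, I obtain an even polynomial $R$ with $\int_{\R}(R-h_{M}')^{2}(1+t^{2})\psi\d t<\eps^{2}$ for a small $\eps>0$ to be fixed. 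Setting $Q(x):=\int_{0}^{x}R(t)\d t$, the evenness of $R$ makes $Q$ an odd polynomial with $Q'=R$, so the derivative part of $\norm{Q-h_{M}}_{H}^{2}$ is at most $\eps^{2}$. For the function part, using $Q(0)=h_{M}(0)=0$, I write $Q(x)-h_{M}(x)=\int_{0}^{x}(R-h_{M}')\d t$, apply Cauchy--Schwarz to obtain $(Q-h_{M})^{2}(x)\le|x|\int_{-|x|}^{|x|}(R-h_{M}')^{2}\d t$, and swap integrals via Fubini:
\[
\int_{\R}(Q-h_{M})^{2}\eta\d x\le\int_{\R}(R(t)-h_{M}'(t))^{2}I(t)\d t,\quad I(t):=\int_{|x|\ge|t|}|x|\eta(x)\d x.
\]
The explicit exponential tail of $\eta$ yields (by integration by parts) $I(t)\le C(1+|t|)\psi(t)\le 2C(1+t^{2})\psi(t)$ uniformly in $t$, so the function-part integral is bounded by $2C\eps^{2}$. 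Taking $\eps$ small enough and applying the triangle inequality gives $\norm{Q-h}_{H}<\delta$.

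The main obstacle, and the reason the statement is not completely immediate, is the factor of $|x|$ that Cauchy--Schwarz introduces when converting a derivative-in-$L^{2}(\psi)$ estimate into a function-in-$L^{2}(\eta)$ estimate. This forces me to approximate $h_{M}'$ in the \emph{stronger} weighted space $L^{2}((1+t^{2})\psi\d t)$ rather than in the natural $L^{2}(\psi\d t)$. Fortunately, the heavier measure $(1+t^{2})\psi$ still has exponential moments, so classical polynomial density in weighted $L^{2}$ spaces applies and the whole scheme closes up.
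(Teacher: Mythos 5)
Your proof is correct, and it takes a genuinely different route from the paper. The paper argues by duality: it assumes some odd $h$ lies in $\text{Cl}_H(E_{odd})\setminus\{0\}$ with $\iprod{h}{x^n}_H=0$ for all $n$, uses the analyticity of $z\mapsto\int h e^{zx}\eta+\int h' z e^{zx}\psi$ to conclude that $h$ satisfies the distributional ODE $h\eta-(h'\psi)'=0$, solves it explicitly for $x\ge2$ (where $\eta=\psi=\pi^{-1}e^{-x/2}$), discards the growing exponential using $h\in H$, and then runs a maximum-principle argument on $(0,2)$ to contradict $h(0)=0$. You instead construct the approximant directly: truncate, approximate the even derivative $h_M'$ by an even polynomial in $L^2\big((1+t^2)\psi\,dt\big)$, and antidifferentiate. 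The crux of your argument is the Hardy-type estimate obtained from Cauchy--Schwarz and Fubini, which shows that for functions vanishing at the origin the $L^2(\eta)$ norm is controlled by the $L^2((1+t^2)\psi)$ norm of the derivative; the explicit exponential tails of $\eta$ and $\psi$ make the kernel $I(t)=\int_{|x|\ge|t|}|x|\eta(x)\,dx$ comparable to $(1+t^2)\psi(t)$ (for $|t|\le 2$ one also uses $\eta\in L^1$ and the positivity and continuity of $\psi$ on compacts, both guaranteed by the paper's hypotheses). The analytic-continuation/moment argument does not disappear from your proof --- it is hidden in the classical fact that polynomials are dense in $L^2(\nu)$ when $\nu$ has exponential moments --- but it is invoked only in the single-measure $L^2$ setting, where the dual annihilator is killed immediately by Fourier inversion, so you never have to analyze the second-order ODE or its sign structure. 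What your approach buys is robustness and explicitness: it would survive any choice of weights with comparable exponential tails, whereas the paper's contradiction argument leans on the specific constant-coefficient form of the equation for $x\ge 2$ and on the positivity of $\eta$ and $\psi$ in the interior. The paper's approach, in exchange, needs no truncation or Hardy inequality and is shorter to set up.
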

\begin{proof}
Denote by $\poly$ the set of all polynomials.
Let $E_{odd}$ be the
set of all odd functions $h\in C^{1}(\R)$ such that $\norm h_{\infty}+\norm{h'}_{\infty}<\infty$.
It is enough to prove that $E_{odd}\subset\text{Cl}_{H}( \poly)$.
Indeed, if this is proved,
then for any $\delta >0$ there exists $q\in \poly$ such that
$
\norm{h-q}_{H}<\delta .
$
 Setting $Q(x)=\frac{1}{2}(q(x)-q(-x))$
to make the polynomial odd would finish the proof.

Assume to the contrary that $E_{odd}\not\subset\text{Cl}_{H}(\poly)$.
Then there exists $h\in\text{Cl}_{H}(E_{odd})\setminus\{0\}$ such
that $\pr h{x^{n}}_{H}=0$ for any $n\in\{0\}\cup\N$. We will prove
that this assumption leads to a contradiction. To this end, set
\[
F(z)=\int_{\R}h(x)e^{zx}\eta(x)\,dx+\int_{\R}h'(x)ze^{zx}\psi(x)\,dx.
\]
Using the Cauchy-Schwarz inequality, one can check that the function
$F$ is analytic in $\{z:\,|\text{Re}(z)|<1/2\}$ and
\[
F^{(n)}(0)=\int_{\R}h(x)x^{n}\eta(x)\,dx+\int_{\R}h'(x)nx^{n-1}\psi(x)\,dx=\pr h{x^{n}}_{H}=0.
\]
Hence, $F(z)=0$, and applying this conclusion to $z=it,\,t\in\R$,
we see that $h$ satisfies the equality
\begin{align*}
\big(h\eta-(h'\psi)'\big)^{\wedge}=0\ \text{ and thus } h\eta-(h'\psi)'=0
\end{align*}
in the sense of distributions where $(\cdot)^{\wedge}$ denotes the
Fourier Transform. Since the function $h\eta$ is continuous
on $(0,\infty)$, $h$ satisfies the differential equation
\begin{equation}
h(x)\eta(x)-(h'(x)\psi(x))'=0\label{eq: ODE}
\end{equation}
pointwise for all $x\in(0,\infty)$. This in turn means that $h''$ is well-defined
on $(0,\infty)$. Actually, with a little effort, one can prove that
this differential equation is satisfied for all $x\in\R$, but we would not need it
for our proof.

Since $h\in\text{Cl}_{H}(E_{odd})$, $h$ is an odd continuous
function. For $x\ge2$, (\ref{eq: ODE}) reads
\[
h(x)+\frac{1}{2}h'(x)-h''(x)=0,
\]
and so $h(x)=C_1\exp(\lambda_1 x) + C_2 \exp(\lambda_2 x)$ with
\[
\lambda_1=\frac{1-\sqrt{17}}{4}, \quad \lambda_2=\frac{1+\sqrt{17}}{4}
\]
 for
all $x\ge 2$.
Since $\lambda_2>1/2$ and $h \in H$, $C_2=0$.
Without loss of generality, assume that $h(2)>0$, i.e.,
$C_1>0$. Then $h'(2)<0$ and since $h(0)=0,\,h(2)>0$, there exists
$x\in(0,2)$ such that $h'(x)>0$. Denote
\[
x_{0}=\sup\{x\in(0,2):\,h'(x)>0\}.
\]
Then $h'(x_{0})=0$ and since $h'(x)\le0$ for $x>x_{0}$, we have
$h(x_{0})>0$. Hence, (\ref{eq: ODE}) implies that $h''(x_{0})>0$.
Therefore $h'(x)>0$ for some $x>x_{0}$, which contradicts the definition
of $x_{0}$. This contradiction finishes the
proof of the lemma.
\end{proof}

We are now ready to prove the main result of this section.

\begin{proof}[Proof of Theorem \ref{thm: bulk}]
Fix an $\e>0$, and let $\Omega$ be the event that $|P| \vee |N| \ge (1/2+\e)n$.
Let $(k,l)$ be a uniformly chosen random pair of distinct elements of $[n]$.
Assume that $\Omega$ occurs.
Then
\begin{equation} \label{eq: bal 11}
 \P (v(k)v(l) >0 \mid A)
 \ge \frac{ \binom{(1/2+\e)n}{2}+\binom{(1/2-\e)n}{2}}{\binom{n}{2}}
 =\frac{1}{2}+2\e^2 +O(n^{-1})
\end{equation}
and
\begin{equation} \label{eq: bal 12}
 \P (v(k)v(l) <0 \mid A)
 \le \frac{ \left( \frac{1}{4}-\e^2 \right) n^2}{\binom{n}{2}}
 =\frac{1}{2}-2\e^2 +O(n^{-1}).
\end{equation}

By the no-gap delocalization theorem \cite[Theorem 1.5]{Rudelson2016}, for $r=c \e^{22}$,
\[
 \P \left( |\{j \in [n]: \ |v(j)| \le r^{1/2} n^{-1/2} \}| \ge (\e^2 /8) n \right) \le \exp(-c \e n).
\]
Let $\Omega_{large}$ be the event that  $|\{j \in [n]: \ |v(j)| \le r^{1/2} n^{-1/2} \}| \le (\e^2 /8) n$,
and assume that $\Omega \cap \Omega_{large}$ occurs.
Then
\begin{equation} \label{eq: bal 2}
 \P( n |v(k)| \cdot |v(l)| \le r \mid A) \\
  \le \P (|v(k)| \wedge |v(l)| <r^{1/2} n^{-1/2} \mid A)
 \le 1- \frac{\binom{(1-(\e^2 /8) )n}{2}}{\binom{n}{2}} \le \frac{\e^2}{4}.
\end{equation}
Let $R \ge (c_0 \e)^{-4}$, where the constant $c_0>0$ will be chosen later.
Since $\norm{v}_2=1$,
\[
 |\{j \in [n]: \ |v(j)| \ge R^{1/2}n^{-1/2} \}
 \le \frac{n}{R} \le (c_0 \e)^{4} n,
\]
so
\begin{equation} \label{eq: bal 3}
 \P ( n |v(k)| \cdot  |v(l)| \ge R \mid A ) \\
 \le \P (|v(k)| \ge  R^{1/2}n^{-1/2} \ \text{or } \ |v(l)| \ge  R^{1/2}n^{-1/2} \mid A )
 \le 2 (c_0 \e)^{4}.
\end{equation}
Summarizing \eqref{eq: bal 11}, \eqref{eq: bal 12},  \eqref{eq: bal 2}, and \eqref{eq: bal 3}, and choosing $c_0$ small enough, we conclude that on the event $\Omega \cap \Omega_{large}$,
\begin{align*}
  \P (n v(k)v(l)  \in [r, R] \mid A)
  & \ge \frac{1}{2} + \frac{3}{2} \e^2 +O(n^{-1})
  \intertext{and}
  \P (n v(k)v(l) \in [-r, -R] \mid A) & \le \frac{1}{2} - \frac{3}{2} \e^2 +O(n^{-1}).
\end{align*}

Let $h \in C^{\infty}(\R)$ be an odd function such that $h(x)=\sign(x)$ for any $x \notin (-r,r)$.
 Lemma \ref{lem: density} and inequality \eqref{eq: (1/2)-Lipschtz} imply that there exists an odd polynomial $Q$ such that $\norm{h-Q}_{L_2( \phi \, dx)}<\e$ and
\[
  \max_{x \in [-R,R]} |h(x)-Q(x)| \le \frac{\e^2}{2}.
\]
By Corollary \ref{cor: product of coord}, there exists $\mathcal{A}_{Q}$ with $\P(A \in \mathcal{A}_{Q}) \ge 1- n^{-\nu}$ such that for any $A \in \mathcal{A}_{Q}$,
\[
 \E_{(k,l)} Q(n v(k)v(l))
  \le \E Q (g_1 g_2) + n^{-\nu}
 = n^{-\nu},
\]
for sufficiently large $n$, since the polynomial $Q$ is odd.
We will provide a lower estimate of this expectation in terms of $\P(\Omega)$.
We have
\[
 \E_{(k,l)} Q(v(k)v(l))
 =\E_{(k,l)} Q(nv(k)v(l)) \cdot \mathbf{1}_{n|v(k)v(l)| \le R} +
 \E_{(k,l)} Q(nv(k)v(l)) \cdot \mathbf{1}_{n|v(k)v(l)| > R}.
\]
Let us estimate these terms  separately.
On the event $\Omega \cap \Omega_{large}$,
\begin{align*}
 \E [ Q(nv(k)v(l)) \cdot \mathbf{1}_{n|v(k)v(l)| \le R} \mid A ]
& \ge \left( 1-\frac{\e^2}{2} \right) \P (n v(k)v(l) \in [r,R] \mid A ) \\
 &- \left( 1+\frac{\e^2}{2} \right) \P (n v(k)v(l) \in [-R,-r] \mid A ) \\
 &- \left( 1+\frac{\e^2}{2} \right) \P (n v(k)v(l) \in [-r,r] \mid A ) \\
& \ge 2 \e^2+O(n^{-1}).
\end{align*}

If $A \in  \mathcal{A}_{Q^2}$, then
\[
 \E [Q^2(n v(k)v(l)) \mid A]
 \le \E Q^2 (g_1 g_2) +n^{-\nu} \\
 \le \left( \norm{f}_{L_2(\phi \, dx)} + \e \right)^2 +n^{-\nu}
 \le C.
\]
Hence, by \eqref{eq: bal 3} and Cauchy-Schwarz inequality, for any $A \in  \mathcal{A}_{Q^2}$,
\begin{align*}
 \E \left[ Q(n v(k) v(l)) \cdot \mathbf{1}_{n|v(k)v(l)| > R} \mid A \right]
 &\le \left( \P [n|v(k)v(l)| \ge R \mid A] \right)^{1/2} \cdot \left( E [Q^2(n v(k)v(l)) \mid A]  \right)^{1/2} \\
 &\le C (c_0 \e)^2
 \le \frac{\e^2}{2}
\end{align*}
if $c_0$ is chosen sufficiently small.
Thus, if $A \in  \mathcal{A}_{Q^2}$ and the event  $\Omega \cap \Omega_{large}$ occurs and $n$ is sufficiently large to absorb the $O(n^{-1})$  term, then
\[
 \E \left[ Q(n v(k) v(l)) \mid A \right] \ge \frac{\e^2}{4},
\]
and so, $A \notin  \mathcal{A}_{Q}$.
This means that $\Omega \cap \Omega_{large} \cap \{A \in \mathcal{A}_{Q^2} \cap \mathcal{A}_{Q} \} = \varnothing$, and so
\[
 \P(\Omega) \le \P(\Omega_{large}^c)+ \P(A \in \mathcal{A}_{Q^2}^c)+ \P(A \in \mathcal{A}_{Q}^c)
 \le n^{-\nu}.
\]
The theorem is proved.
\end{proof}

\section{Edge Eigenvector}

Let $A$ be the adjacency matrix of a $G\left(n,\,p\right)$
graph with a fixed $p\in\text{\ensuremath{\left(0,\,1\right)}}$.
Denote by $u$  a non-leading edge eigenvector.
We are aiming to show that
\begin{equation}
\EE\left(\sign\left(u\left(1\right)u\left(2\right)\right)\right)\le n^{-1/3+\eps}\label{eq:EDGE_Expectation_Sign}
\end{equation}
for a sufficiently small $\eps>0$. If proved, it leads to
\[
\EE\left(\sum_{i}\sign u\left(i\right)\right)^{2}=n+\sum_{i\neq j}\EE\sign\left(u\left(i\right)u\left(j\right)\right)\le n+{n \choose 2}n^{-1/3+\eps}\le n^{5/3+\eps},
\]
 because $u(i)u(j)$  has the same distribution as $u(1)u(2)$
for all $i\neq j$ due to the i.i.d. property of the entries the matrix.

Then, by Markov's inequality, we can derive a bound for $\PP\left(\left|\sum_{i}\sign u\left(i\right)\right|\ge n^{5/6+\eps}\right)$
and thus prove Theorem \ref{thm: edge}. Due to technical difficulties,
we would not derive (\ref{eq:EDGE_Expectation_Sign}) directly. Instead,
we find an event $\scr A$ so that
\begin{equation}
\EE\left(
  \sign\left(u\left(1\right)u\left(2\right)\right)
  \,|\,\scr A\right)\le n^{-1/3+\eps}.
\label{eq:signu1u2}
\end{equation}

The event $\scr A$ will be constructed so that $\PP\left(\scr A^{c}\right)\le n^{-\delta}$
where $\delta>0$ may depend on $\eps$. In view of the estimate above,  we have
\begin{align*}
\PP\left(\left|\sum_{i}\sign u\left(i\right)\right|\ge n^{5/6+\eps/2}\right) & \le\PP\left(\scr A^{c}\right)+\PP\left(\left|\sum_{i}\sign u\left(i\right)\right|\ge n^{5/6+\eps}\,|\,\scr A\right)\\
 & \le n^{-\delta}+n^{-\eps}\le n^{-\delta'},
\end{align*}
which finishes the proof of Theorem (\ref{thm: edge}).

Up to a scaling, $A$ is a Wigner matrix with two deterministic shifts:
\begin{equation}
\sqrt{\frac{1}{p(1-p)n}}A =H+\sqrt{\frac{pn}{1-p}}\one\one^{\top}
-\sqrt{\frac{p}{(1-p)n}}I_n \label{eq:ANormalization}
\end{equation}
where $H_{ij}=\left(h_{ij}\right)$ is a symmetric matrix with $0$ diagonal, i.i.d entries $h_{ij}$ with mean $0$ and variance $1/n$ above the diagonal:
\begin{equation}
h_{ij}=\begin{cases}
\sqrt{\frac{1-p}{p}}\frac{1}{\sqrt{n}} & \text{with probability }p,\\
-\sqrt{\frac{p}{1-p}}\frac{1}{\sqrt{n}} & \text{with probability }1-p,
\end{cases}\label{eq:H_EntryDistribution-1}
\end{equation}
and $\one\in S^{n-1}$ is the vector such that every component equals $\frac{1}{\sqrt{n}}$. Notice that the last term in \eqref{eq:ANormalization}
does not affect the eigenvectors and the order of eigenvalues of $\sqrt{\frac{1}{p(1-p)n}}{A}$.
Therefore, it is sufficient to prove \eqref{eq:signu1u2}
for the non-leading edge eigenvectors of
\begin{equation}
  \t {A} :=H+\sqrt{\frac{pn}{1-p}}\one\one^{\top}.
  \label{eq:deftAp}
\end{equation}
Furthermore, we will only prove the theorem for the
eigenvectors belonging to the positive edge $\{u_\a:\,\a\le \varphi_n^\rho\}$. The proof
for eigenvectors $\{u_\a:\,n-\a\le \varphi_n^\rho\}$
is essentially the same.

\subsection{Outline of the proof}

To lighten the notation, assume that $A$ is an $(n+2) \times (n+2)$ matrix.

 It is convenient to break the matrix $\t {
  A}$ into the blocks:
\begin{equation}
\t { A}=\left[\begin{array}{cccc}
D &  & W^{\top}\\
\\
W &  & B\\
\\
\end{array}\right], \label{eq:ABlockDecomposition}
\end{equation}
where $B$ is of size $n\times n$ and $D$ is of size $2\times2$.
Let $G\left(z\right):=\frac{1}{B-z}$ be the Green function of $B$.
We will write the eigenvalues of $\t {A}$ in terms of $B,\,W$ and $D$:
\begin{prop}
Any $\lambda \in \R$ satisfying
\begin{equation}
\det\left(W^{\top}G\left(\lambda\right)W-D+\lambda I_{2}\right)=0\label{eq:EdgeEvalueDetection}
\end{equation}
is an eigenvalue  of $\t {A}$.
Furthermore, let $q\in\R^{2}$
be a non-trivial null vector of $W^{\top}G\left(\lambda\right)W-D+\lambda I_{2}$.
Then, $\left[\begin{array}{c}
q\\
-G\left(\lambda\right)Wq
\end{array}\right]$ is an eigenvector corresponding to $\lambda$.
\end{prop}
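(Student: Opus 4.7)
The plan is a direct block-matrix verification; this proposition is essentially the Schur-complement characterization of eigenvalues, and no deep input is required. The statement implicitly assumes $\lambda \notin \sigma(B)$ so that the Green function $G(\lambda) = (B-\lambda)^{-1}$ is well-defined; with that in hand, I would simply exhibit the candidate vector and compute.

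Concretely, given $\lambda$ with $\det(W^\top G(\lambda) W - D + \lambda I_2) = 0$ and a nonzero $q \in \R^2$ with $(W^\top G(\lambda) W - D + \lambda I_2) q = 0$, set
\[
v = \begin{bmatrix} q \\ -G(\lambda) W q \end{bmatrix} \in \R^{2+n}.
\]
Applying $\t A_p$ in block form,
\[
\t A_p v = \begin{bmatrix} D & W^\top \\ W & B \end{bmatrix} \begin{bmatrix} q \\ -G(\lambda) W q \end{bmatrix} = \begin{bmatrix} Dq - W^\top G(\lambda) W q \\ Wq - B G(\lambda) W q \end{bmatrix}.
\]
The top block equals $\lambda q$ precisely by the null-vector hypothesis on $q$. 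For the bottom block, the identity $B G(\lambda) = (B-\lambda) G(\lambda) + \lambda G(\lambda) = I_n + \lambda G(\lambda)$ gives
\[
Wq - B G(\lambda) W q = Wq - Wq - \lambda G(\lambda) W q = -\lambda G(\lambda) W q,
\]
which is $\lambda$ times the bottom block of $v$. Thus $\t A_p v = \lambda v$.

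It remains only to check that $v \ne 0$, which is immediate: the top block of $v$ is $q$, and $q$ was chosen nontrivial. So $\lambda$ is a genuine eigenvalue of $\t A_p$ with the claimed eigenvector.

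There is no real obstacle; the only subtlety worth flagging is the tacit assumption $\lambda \notin \sigma(B)$, which is needed for $G(\lambda)$ to make sense in the determinantal equation in the first place. (In later use of this proposition the eigenvalues of $B$ will be separated from the edge eigenvalues of $\t A_p$ with high probability via rigidity, so this caveat is harmless for the applications that follow.)
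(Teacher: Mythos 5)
Your verification is correct and is essentially the same computation as the paper's: the paper applies the block matrix $\t A_p-\lambda I$ to the vector $\left[\begin{smallmatrix}q\\-G(\lambda)Wq\end{smallmatrix}\right]$ and observes it vanishes, which is exactly your top-block/bottom-block calculation written in one line. Your remark that $\lambda\notin\sigma(B)$ is tacitly assumed is a fair observation, and it is indeed harmless in the later applications where rigidity and interlacing separate $\lambda_\alpha$ from the spectrum of $B$.
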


\begin{proof}
Assume that
\[
\det\left(W^{\top}G\left(\lambda\right)W-D+\lambda I_{2}\right)=0.
\]

Let $q\in\R^{2}$ be a non-trivial null vector of $W^{\top}G\left(\lambda\right)W-D+\lambda I_{2}$.
Then, we have
\[
\left[\begin{array}{cccc}
D-\lambda &  & W^{\top}\\
\\
W &  & B-\lambda\\
\\
\end{array}\right]\left[\begin{array}{c}
q\\
-G\left(\lambda\right)Wq
\end{array}\right]=\vec{0}.
\]

Therefore,  $\lambda$ is an eigenvalue of $\t {A}$ and $u=\left(\begin{array}{c}
q\\
-G\left(\lambda\right)Wq
\end{array}\right)$ is the corresponding eigenvector.
\end{proof}
Up to a scaling, we have $q=\left[\begin{array}{c}
1\\
-\frac{w_{1}^{\top}G\left(\lambda\right)w_{1}-d_{11}+\lambda}{w_{1}^{\top}G\left(\lambda\right)w_{2}-d_{12}}
\end{array}\right]$ where $w_{1},\,w_{2}$ are the column vectors of $W$ and $D=\left[\begin{array}{cc}
d_{11} & d_{12}\\
d_{12} & d_{22}
\end{array}\right]$. Therefore,
\begin{equation}
\sign\left(u\left(1\right)u\left(2\right)\right)=\sign\left(-\frac{w_{1}^{\top}G\left(\lambda\right)w_{1}-d_{11}+\lambda}{w_{1}^{\top}G\left(\lambda\right)w_{2}-d_{12}}\right).\label{eq:EdgeSign}
\end{equation}

Our goal is to estimate $\EE\sign\left(-\frac{w_{1}^{\top}G\left(\lambda\right)w_{1}-d_{11}+\lambda}{w_{1}^{\top}G\left(\lambda\right)w_{2}-d_{12}}\right)$.
To this end, we would like to take advantage of independence of $B,W$, and $D$.
However, the fact that $\lambda$ depends on all these random quantities precludes us from using this independence straightforwardly.
 This forces us to consider
\begin{align*}
s\left(E\right) & :=\sign\left(-\frac{w_{1}^{\top}G\left(E\right)w_{1}-d_{11}+E}{w_{1}^{\top}G\left(E\right)w_{2}-d_{12}}\right)
\end{align*}
for a constant $E$ instead on dealing with $\lambda$ directly.
To analyze the behavior of the function $s$, it is necessary
to know what the matrix $B$ looks like.

Let $\left\{ \mu_{\a}\right\} _{\a=1}^{n}$ be the eigenvalues of
$B$ arranged in a non-increasing order and let $\left\{ u_{\a}\right\} _{\a=1}^{n}$
be the corresponding unit eigenvectors. Observe that, up to a scaling
factor $\sqrt{\frac{n+2}{n}}$, $B$ is a Wigner matrix with a rank
1 shift:
\[
B=M+\sqrt{\frac{p\left(n+2\right)}{\left(1-p\right)}}ll^{\top},
\]
where $M$ is the lower right $n$ by $n$ minor of $H$
 (from \eqref{eq:ANormalization} and \eqref{eq:H_EntryDistribution-1}), and $l\in\R^{n}$
is the vector with all its components equal to $\frac{1}{\sqrt{n+2}}$.
Here, $\sqrt{\frac{n+2}{n}}M$ is a generalized Wigner matrix having nice spectral properties with high probability.

The proof of Theorem \ref{thm: edge} breaks into 4 steps:

\vskip 0.1in
\emph{1. Typical spectral properties of $M$.}

Here we are encountering the first obstacle. We want to fix a typical sample $M$ to compute $s(E)$. In particular, we want
this sample to have gaps between the eigenvalues close to the edge of order at least $n^{-2/3-\eps}$. Such property is called level repulsion in the edge:
\begin{condition}
\label{cond:LevelRepulsionEdge}(Level Repulsion on Edge) A random
Hermitian matrix $H$ is said to satisfy level repulsion at the edge,
if for any $C_{LR}>0$, and $\eps_{LR}>0$, there exists
$\delta_{LR}>0$, with probability at least $1-n^{-\delta_{LR}}$
\begin{equation}
\max_{E\sub\left[2-n^{-2/3}\varphi_n^{C_{LR}},\,2+n^{-2/3}\varphi_n^{C_{LR}}\right]}\scr N\left(E-n^{-2/3-\eps_{LR}},\,E+n^{-2/3-\eps_{LR}}\right)<2.\label{eq:levelRepulsion}
\end{equation}
\end{condition}
We remark that it is known that a GOE (Gaussian Orthogonal Ensemble) matrix model satisfy this condition, and we will show in appendix that our matrices $H$ and $M$  satisfy this condition as well.

Notice that such level repulsion is achievable with high probability for a single $n \times n$ principal minor $M$, but we need it for all minors simultaneously, and  the probability estimate too weak to be combined with the union bound.
Instead, we define $\scr{A}$ as the event that the
 $(n+2) \times(n+2)$ matrix $H$ has the desired spectral properties.
 In this case, $\scr{A}$ is likely in a sense that
 $\PP({\scr A}^c)<n^{-\delta}$ for some $\delta>0$.
However, we cannot condition on $\scr{A}$ directly
as in this way we will lose the independence of $B$,
$W$, and $D$ while estimating $s(E)$.
 Therefore, in the first step we will define the event $\scr{A}$ and show that
\[
\EE \left( |\ind_{H \text{ is typical}}- \ind_{M \text{ is typical}}|\right) \text{ is small enough.}
\]
This would allow us to use independence while conditioning on the event that $M$ is typical and avoid invoking the union bound while applying this argument to all $n \times n$ principal minors.

\vskip 0.1in
\emph{2. From spectral properties of $M$ to spectral properties of $B$.}

In the second step, we fix a typical $M$, and consider the spectral properties
of its rank one perturbation $B$.  We expect $B$ to behave like $M$ with an exceptional
eigenvector almost parallel to $l$ and the corresponding eigenvalue close to $\sqrt{\frac{p\left(n+2\right)}{1-p}}$.
We will quantify these properties in Definition  \ref{cond:Btypical}
in Section \ref{subsec:Typical B}.

\vskip 0.1in
\emph{3. Concentration of $w_{i}^{\top}G\left(E\right)w_{j}-d_{ij}+E$.}

The expression above is a key quantity in analyzing $s(E)$. To bound $s(\lambda)$ for $\lambda$ being an edge eigenvalue of $\tilde{A}_p$, we have to
understand the behavior of $s(E)$ for different $E$. To this end, we derive the concentration of $w_{i}^{\top}G\left(E\right)w_{j}$
for $i,\,j\in\left\{ 1,\,2\right\} $. By definition,
\[
w_{i}^{\top}G\left(E\right)w_{j}=\sum_{\a\in\left[n\right]}\frac{1}{\mu_{\a}-E}\iprod{w_{i}}{u_{\a}}\iprod{w_{j}}{u_{\a}}.
\]
If $E$ is much closer to an eigenvalue $\mu_{\a_{E}}$ than any other
eigenvalues, then, we expect  $w_{i}^{\top}G\left(E\right)w_{j}$
to be dominated by the term $\frac{1}{\mu_{\a_{E}}-E}\iprod{w_{i}}{u_{\a_{E}}}\iprod{w_{j}}{u_{\a_{E}}}$.
We will show that after conditioning on a typical $B$, with high probability in $W$ and $D$ we have
\begin{equation}
\forall i,\,j\in\left\{ 1,\,2\right\}
\;w_{i}^{\top}G\left(E\right)w_{j}\simeq-\delta_{ij}+\frac{\iprod{w_{i}}{u_{\a_{E}}}\iprod{w_{j}}{u_{\a_{E}}}}{\mu_{\a_{E}}-E}
\end{equation}

\vskip 0.1in
\emph{4. Completion of the proof.}

We combine the results obtained at previous steps to show that
\[
\EE\left(s\left(\lambda_\alpha\right) \ind_{\text{H is typical}}\right)=n^{-1/3+C\eps_{LR}}.
\]
Once this estimate is proved, the main theorem follows immediately.

\subsection{\label{subsec:Typical M} A typical sample of $M$ }

Let $M$ be an $n \times n$ principal submatrix of $H$.
Let $\left\{ \nu_{\a}\right\} _{\a=1}^{n}$ be the eigenvalues of $M$ arranged
in a non-increasing order and let $\left\{ v_{\a}\right\} _{\a=1}^{n}$
be the corresponding unit eigenvectors. Let $G_{M}\left(z\right):=\left(M-z\right)^{-1}$
be the Green function of $M$ and
\[
m_{M}\left(z\right):=\frac{1}{n} \Tr\left(G(z)\right)= \frac{1}{n}\sum_{\a=1}^{n}\frac{1}{\nu_{\a}-z}
\]
be the Stieltjes transform of $M$.

A special role in the proof will be played by the level repulsion property, and the strength of the level repulsion has to be carefully chosen for matrices of different sizes.
Let $t>0$. We will say that an $m \times m$ symmetric matrix $B$ satisfies the level repulsion property with parameter $t$ if
for any two distinct eigenvalues $\nu,\,\nu'$ of $A$ in
$\left[2-n^{-2/3}\varphi_n^{3\rho},\,2+n^{-2/3}\varphi_n^{3\rho}\right]$,
we have
\[
\left|\nu-\nu'\right|>t.
\]
  In the argument below, $m$ takes values from $n-4$ to $n$.
Denote the set of such matrices by $\scr{LR}(n,t)$. Lemma \ref{lem:change t} asserts that
\[
\PP(M \in \scr{LR}(n, n^{-2/3-\eps_{LR}}) ) \ge 1- n^{-\delta_{LR}}
\]
for some $\delta_{LR}>0$.
We start with a lemma showing that the parameter $t$ in the definition of level repulsion can be adjusted without significantly changing this probability.
\begin{lem}  \label{lem:change t}
Let $C>0$.
 Let $M$ be an $n \times n$ symmetric random matrix.
 There exists $\theta \in (1/2,1)$
 which depends on the distribution of $M$ 
  such that
 \[
\PP \left(M \in \scr{LR} \left(n, \theta n^{-2/3-\eps_{LR}} -4 \frac{\varphi_n^C}{n} \right) \right)
- \PP \left(M \in \scr{LR}(n,  \theta n^{-2/3-\eps_{LR}} ) \right)
\le n^{-1/3+ 2 \e_{LR}}.
\] 
\end{lem}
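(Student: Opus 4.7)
The plan is a straightforward averaging (pigeonhole) argument over a grid of candidate values of $\theta$ in $(1/2,1)$. First I would introduce the non-increasing function
\[
F(t) := \PP\bigl(M \in \scr{LR}(n,t)\bigr),
\]
so that the quantity to be bounded is $F(\theta n^{-2/3-\eps_{LR}} - 4\varphi_n^C/n) - F(\theta n^{-2/3-\eps_{LR}}) \ge 0$. The non-negativity and monotonicity use that $\scr{LR}(n, \cdot)$ is non-increasing in its gap parameter: imposing a weaker lower bound on the eigenvalue gaps only enlarges the set of acceptable matrices.

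Next, set $\Delta := 4\varphi_n^C / n$ and $K := \lfloor n^{1/3-\eps_{LR}}/(8\varphi_n^C) \rfloor$, and consider the arithmetic progression
\[
\theta_k := \tfrac{1}{2} + k \cdot \Delta \cdot n^{2/3+\eps_{LR}}, \qquad k=0,1,\dots,K.
\]
The choice of $K$ ensures $\theta_k \in (1/2, 1)$ for every $k \le K$. The corresponding thresholds $t_k := \theta_k n^{-2/3-\eps_{LR}}$ satisfy $t_k - t_{k-1} = \Delta$, so that $t_k - \Delta = t_{k-1}$, and the sum of the increments telescopes:
\[
\sum_{k=1}^{K} \bigl[F(t_k - \Delta) - F(t_k)\bigr] = \sum_{k=1}^{K} \bigl[F(t_{k-1}) - F(t_k)\bigr] = F(t_0) - F(t_K) \le 1.
\]

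By pigeonhole there exists $k^*\in\{1,\dots,K\}$ for which the corresponding summand is at most $1/K$, and I would then take $\theta := \theta_{k^*}$ (this index, and hence $\theta$, depends only on the law of $M$). Since $\varphi_n^C = (\log n)^{C\log\log n} = n^{o(1)}$, for $n$ sufficiently large we have
\[
\frac{1}{K} \le 16 \, \varphi_n^C \, n^{-1/3+\eps_{LR}} \le n^{-1/3+2\eps_{LR}},
\]
which yields the lemma. There is no real obstacle in this argument: it is a pure pigeonhole over a grid of size $n^{1/3-\eps_{LR}+o(1)}$, and the gap $\eps_{LR} \to 2\eps_{LR}$ in the exponent on the right-hand side is precisely what absorbs the subpolynomial factor $\varphi_n^C$. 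The restriction $\theta \in (1/2,1)$ is respected because the grid spacing was designed so that exactly $K$ grid points fit into this interval.
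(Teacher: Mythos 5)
Your proposal is correct and is essentially the paper's own argument: both are pigeonhole/telescoping arguments over a grid of roughly $n^{1/3-O(\eps_{LR})}$ threshold values spaced by (multiples of) $\varphi_n^C/n$, using monotonicity of $t\mapsto\PP(M\in\scr{LR}(n,t))$ and the fact that the total increment is at most $1$. The only differences are cosmetic (your grid ascends from $\tfrac12 n^{-2/3-\eps_{LR}}$ in steps of $4\varphi_n^C/n$, while the paper descends from $n^{-2/3-\eps_{LR}}$ in steps of $\varphi_n^C/n$ grouped four at a time), and your bookkeeping of why $1/K\le n^{-1/3+2\eps_{LR}}$ absorbs the subpolynomial factor is accurate.
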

\begin{proof}
For $k \ge 0$, denote
\[
P_k:= \PP \left(M \in \scr{LR} \left(n, n^{-2/3-\eps_{LR}} -k \frac{\varphi_n^C }{n} \right) \right).
\] 
Then $P_k \in (0,1)$ form an increasing sequence. Hence, there exists $k \le  4 n^{1/3 - 2\eps_{LR}}$ such that
\[
 P_{k+4}-P_k \le n^{-1/3+ 2 \e_{LR}}.
\]
This implies the lemma if we choose $\theta$ so that
$ \theta n^{-2/3-\eps_{LR}}=   n^{-2/3-\eps_{LR}} -k \frac{\varphi^C_n}{n}$ and note that $\theta>1/2$.
\end{proof}
We will fix this value of $\theta$ for matrices $H$ whose entries are distributed as in \eqref{eq:H_EntryDistribution-1}  for the rest of the proof.

Let us collect the properties of the $n \times n$ submatrices of $H$ which we will use throughout the proof.
\begin{definition} \label{def: Ank}
Fix  $\eps_{LR}>0$ and $\rho>1$, set
\begin{equation}
    \eta = n^{-2/3-2\eps_{LR}}. \label{eta} 
\end{equation}

  Denote by $\scr A_{(n,k)}$ the set of symmetric $n \times n$ matrices  $M$ having the following properties:
\begin{itemize}
\item Isotropic local semicircular law:
\begin{equation}
\sup_{\left|E-2\right|\le n^{-2/3+3\eps_{LR}}}\sup_{x,y\in\left\{ e_{i}\right\} _{i=1}^{n}\cup\left\{ l\right\} }\left|\iprod x{G_{M}\left(E+\i\eta\right)y}-\iprod xym_{sc}\left(E+\i\eta\right)\right|<3n^{-\frac{1}{3}+3\eps_{LR}},
\label{eq:IsotropicGreenFunction}
\end{equation}
\item  Rigidity of eigenvalues:
\begin{equation}
\left|\nu_{\a}-\gamma_\a\right|\le\varphi_n^{C_{re}}\left[\min\left(\a,\,n-\a+1\right)\right]^{-1/3}n^{-2/3},\label{eq:M_rigidity}
\end{equation}
where $C_{re}>1$ is a universal constant, and
$\gamma_{\a}$ satisfies
$\int_{\gamma_{\a}}^{2}\frac{2}{\pi}\sqrt{4-x^{2}}dx=\frac{\a}{n}$.
\item $l_{\infty}$-delocalization of eigenvectors:
\begin{equation}
\forall\a,\,\norm{v_{\a}}_{\infty}\le\frac{\varphi_n^{C}}{\sqrt{n}},\label{eq:M_evectorl_infinity}
\end{equation}
\item Isotropic delocalization of eigenvectors:
\begin{align}
\max_{\a\in\left[n\right]}\left|\iprod{v_{\a}}l\right|^{2} & <n^{\eps_{LR}-1},
\label{eq:IsotropicDelocalization}
\end{align}

\item Level repulsion at the edge:
$M \in \scr{LR} \left(n, \theta n^{-2/3-\eps_{LR}} -k \frac{\varphi_n^C}{n} \right)$, i.e., \\
for any two distinct eigenvalues $\nu,\,\nu'$ of $M$ in
$\left[2-n^{-2/3}\varphi_n^{3\rho},\,2+n^{-2/3}\varphi_n^{3\rho}\right]$,
we have
\begin{equation} \label{eq:LevelRepulsion}
\left|\nu-\nu'\right|>\theta n^{-2/3-\eps_{LR}} -k \frac{\varphi_n^C}{n}. 
\end{equation}
The value of $\theta$ is chosen  to satisfy the condition of Lemma \ref{lem:change t}.
\end{itemize}
\end{definition}
A typical Wigner matrix belongs to the set $\scr A_{(n,0)}$,
see \cite{Erdos2012b}, \cite{EJP3054}.
However, we need this fact not for a single matrix $M$, but for all $n \times n$ principal submatrices of the $(n+2) \times (n+2)$ matrix $H$.
Denote by $H^{(k)}$ the $(n+1) \times (n+1)$ principal submatrix of $H$ with row and column $k$ removed.
Similarly, denote by $H^{(i, j)}$ the $n \times n$ principal submatrix of $H$ with rows and columns $i,j$ removed.
 The properties  \eqref{eq:IsotropicGreenFunction} -- \eqref{eq:M_evectorl_infinity} hold with an overwhelming probability, which allows to use a union bound while establishing them.
In contrast to it, property \eqref{eq:LevelRepulsion} holds only with probability $1-n^{-\delta_{LR}}$ for some $\delta_{LR}>0$, which is too weak to be combined with the union bound.
To guarantee that the level repulsion holds with high probability for all principal submatrices, we show that the eigenvalues of these submatrices are located closely to the eigenvalues of the original matrix.
To this end, we need the following lemma.
\begin{lem}
\label{lem:LevelRepulsionForMinor}
Let $J$ be an $n \times n$ symmetric matrix satisfying conditions \eqref{eq:M_rigidity} and \eqref{eq:M_evectorl_infinity}.
Let $k \in [n]$, and let $J^{(k)}$ be the $(n-1) \times (n-1)$ principal submatrix of $J$ with row and column $k$ removed.
Let $\mu \in \left[2-n^{-2/3}\varphi_n^{3\rho},\,2+n^{-2/3}\varphi_n^{3\rho}\right]$ be an eigenvalue
of $J^{(k)}$.
If $J$ or $J^{(k)}$ satisfies \eqref{eq:LevelRepulsion}, then
there exists an eigenvalue $\lambda$ of $J$ such that
\begin{equation}  \label{eq:lambda-mu}
  0 \le \lambda-\mu \le  \frac{\varphi_n^C}{n}. 
\end{equation}
Consequently, if one of the matrices $J$ or
$J^{(k)}$ satisfies condition \eqref{eq:LevelRepulsion}, then the other one satisfies the same condition with a extra loss of $\frac{\varphi_n^C}{n}$.
\end{lem}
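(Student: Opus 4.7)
The existence of $\lambda \ge \mu$ is immediate from Cauchy's interlacing theorem: writing the eigenvalues of $J$ and $J^{(k)}$ in decreasing order as $\lambda_1 \ge \ldots \ge \lambda_m$ and $\mu_1 \ge \ldots \ge \mu_{m-1}$, we have $\lambda_i \ge \mu_i \ge \lambda_{i+1}$, so any $\mu = \mu_{\alpha-1}$ is matched to $\lambda := \lambda_\alpha$. The quantitative bound $\lambda - \mu \le \varphi_n^C/n$ requires more work; my plan is to use the secular equation relating the two spectra, combined with the $\ell_\infty$-delocalization hypothesis on $J$.

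Let $v \in \R^{m-1}$ denote the $k$-th column of $J$ with the diagonal entry removed. Combining the Schur complement identity $(J-z)^{-1}_{kk} = (J_{kk} - z - v^\top(J^{(k)}-z)^{-1}v)^{-1}$ with Cramer's rule $(J-z)^{-1}_{kk} = \det(J^{(k)}-z)/\det(J-z)$ and matching residues at $z = \lambda_\alpha$ produces the classical identity
\[
  |w_\alpha(k)|^2 = \frac{\prod_{j=1}^{m-1}|\lambda_\alpha - \mu_j|}{\prod_{\beta \ne \alpha}|\lambda_\alpha - \lambda_\beta|},
\]
where $w_\alpha$ is the unit eigenvector of $J$ with eigenvalue $\lambda_\alpha$. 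Isolating the factor $|\lambda_\alpha - \mu_{\alpha-1}|$ and pairing each remaining $\mu_j$ in the denominator of the resulting expression with a neighboring $\lambda_\beta$ via interlacing (so that $|\lambda_\beta - \lambda_\alpha|/|\mu_j - \lambda_\alpha| \le 1$ term by term) collapses the product to
\[
  \lambda_\alpha - \mu_{\alpha-1} \le |w_\alpha(k)|^2 \cdot \frac{|\lambda_\alpha - \lambda_1| \cdot |\lambda_\alpha - \lambda_m|}{|\mu_\alpha - \lambda_\alpha|}.
\]
Rigidity \eqref{eq:M_rigidity} then bounds $|\lambda_\alpha - \lambda_1| = O(\varphi_n^{C_{re}} n^{-2/3})$ in the edge regime and $|\lambda_\alpha - \lambda_m| = O(1)$, while the $\ell_\infty$-delocalization \eqref{eq:M_evectorl_infinity} gives $|w_\alpha(k)|^2 \le \varphi_n^{2C}/n$.

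What remains is a lower bound on the partner gap $|\mu_\alpha - \lambda_\alpha|$, which is supplied by the level repulsion hypothesis. Applying the same rearrangement with the two gaps swapped yields the symmetric estimate $\mu_\alpha - \lambda_\alpha \le G/(\lambda_\alpha - \mu_{\alpha-1})$ for $G := |w_\alpha(k)|^2 |\lambda_\alpha - \lambda_1||\lambda_\alpha - \lambda_m|$. Writing $a := \lambda_\alpha - \mu_{\alpha-1}$ and $b := \mu_\alpha - \lambda_\alpha$, one therefore has $ab \le G$, while level repulsion---applied to $J$ via $\lambda_{\alpha+1} - \lambda_{\alpha-1} \ge t$, or to $J^{(k)}$ via $\mu_\alpha - \mu_{\alpha-1} \ge t$---yields $a + b \ge t$. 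This forces $\max(a,b) \ge t/2$ and hence $\min(a,b) \le 2G/t$, which upon substitution gives the required bound on $a$. The main obstacle is precisely this mutual dependence: each gap bounds the other, and it is only the absolute lower bound $a + b \ge t$ from level repulsion that lets one identify which of the two gaps is the ``large'' one. The concluding claim of the lemma---that level repulsion transfers between $J$ and $J^{(k)}$ with loss $\varphi_n^C/n$---then follows from a short triangle-inequality argument: if consecutive edge eigenvalues $\mu, \mu'$ of $J^{(k)}$ are matched to eigenvalues $\lambda, \lambda'$ of $J$ within $\varphi_n^C/n$, then $|\mu - \mu'| \ge |\lambda - \lambda'| - 2\varphi_n^C/n$, exactly the loss quoted in the statement.
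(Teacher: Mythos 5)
Your route through the spectral--weight identity $|w_\alpha(k)|^2=\prod_{j}|\lambda_\alpha-\mu_j|\big/\prod_{\beta\ne\alpha}|\lambda_\alpha-\lambda_\beta|$ is a genuinely different (and classical) starting point from the paper's, which instead extracts the secular equation $\sum_{\alpha}\frac{\lambda_\alpha}{\lambda_\alpha-\mu}\langle e_k,u_\alpha\rangle^2=1$ from Sylvester's identity. Your pairing argument and the resulting inequality $ab\le G$ for the two gaps $a,b$ separating $\lambda_\alpha$ from its two $\mu$-neighbours are correct (modulo sign/index slips: with the standard interlacing $\lambda_{\alpha-1}\ge\mu_{\alpha-1}\ge\lambda_\alpha\ge\mu_\alpha$, your displayed quantity $\lambda_\alpha-\mu_{\alpha-1}$ is nonpositive). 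The genuine gap is at the step where you claim that level repulsion ``lets one identify which of the two gaps is the large one.'' From $ab\le G$ and $a+b\ge t$ you only obtain $\min(a,b)\le 2G/t$; nothing in the argument tells you that the small gap is the one adjacent to the particular $\mu$ you started from. The lemma demands, for the given $\mu=\mu_j$, an eigenvalue of $J$ at most $\varphi_n^{C}/n$ \emph{above} $\mu_j$, i.e.\ specifically that $\lambda_j-\mu_j$ is small; your dichotomy is equally consistent with $\lambda_j$ hugging $\mu_{j-1}$ from below while $\mu_j$ stays far from every eigenvalue of $J$. The same one-sidedness is exactly what the ``Consequently'' part needs: if two consecutive $\mu$'s were matched to the same $\lambda$, or matched from opposite sides, your closing triangle inequality yields nothing. (A downward induction starting from $\alpha=1$, where the pairing gives $\lambda_1-\mu_1\le|w_1(k)|^2|\lambda_1-\lambda_m|$ unconditionally, could repair this, but that is an additional argument you have not supplied.)

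Two further problems. First, when only $J$ (not $J^{(k)}$) satisfies \eqref{eq:LevelRepulsion}, the bound $a+b\ge t$ does not follow: $a+b=\mu_{\alpha-1}-\mu_\alpha$, and interlacing only gives the \emph{upper} bound $\mu_{\alpha-1}-\mu_\alpha\le\lambda_{\alpha-1}-\lambda_{\alpha+1}$, so both $\mu$-neighbours could sit arbitrarily close to $\lambda_\alpha$ no matter how well the $\lambda$'s repel. Second, even granting everything, your bound is $2G/t$ with $G=O(\varphi_n^{C}n^{-1}\cdot n^{-2/3})$ and $t\asymp n^{-2/3-\eps_{LR}}$, i.e.\ $O(\varphi_n^{C}n^{-1+\eps_{LR}})$, which misses the stated $\varphi_n^{C}/n$ by a factor $n^{\eps_{LR}}$ that cannot be absorbed into $\varphi_n^{C}$ (this weaker bound would still suffice downstream, but it does not prove the lemma as stated). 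The paper's proof avoids all three issues: in $\sum_{\alpha}\frac{\lambda_\alpha}{\lambda_\alpha-\mu}\langle e_k,u_\alpha\rangle^2=1$ the terms with $\lambda_\alpha<0$ contribute at most $2/3$, so the terms with $\lambda_\alpha>\mu$ contribute at least $1/3$; level repulsion of either matrix (via interlacing) together with \eqref{eq:M_evectorl_infinity} and \eqref{eq:M_rigidity} kills every such term except the one for the smallest $\lambda_\beta>\mu$, forcing $\frac{\lambda_\beta}{\lambda_\beta-\mu}\langle e_k,u_\beta\rangle^2\ge\frac13-o(1)$ and hence $0<\lambda_\beta-\mu\le\varphi_n^{C}/n$ --- a bound that is automatically one-sided and involves no division by $t$.
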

\begin{proof}
Note that $\mu$ is an eigenvalue of the matrix
$J-e_k e_k^\top J$ 
 as well since the $k$-th row of this matrix is $0$.
We will start with showing that there exists an eigenvalue $\lambda$ of $J$ satisfying \eqref{eq:lambda-mu}.
Let $G_{J}$ be the Green function of $J$. By Sylvester\textquoteright s
determinant identity, we have
\begin{align*}
0 & =\det\left(J-\mu-e_{k}e_{k}^{\top}J\right)\\
 & =\det\left(J-\mu\right)\det\left(I_{n}-e_{k}e_{k}^{\top}JG_{J}\left(\mu\right)\right)\\
 & =\det\left(J-\mu\right)\left(1-e_{k}^{\top}JG_{J}\left(\mu\right)e_{k}\right).
\end{align*}

If $\det\left(J-\mu\right)=0$, then we are done. Otherwise, $1-e_{k}^{\top}JG_{J}\left(\lambda\right)e_{k}=0$,
which can be rewritten as
\[
\sum_{\alpha}\frac{\lambda_{\a}}{\lambda_{\a}-\mu}\iprod{e_{k}}{u_{\a}}^{2}=1,
\]
where $\lambda_1 \ge \cdots \ge \lambda_m$ are the eigenvalues of $J$, and $u_1, \ldots, u_m$ are the corresponding unit eigenvectors.

For $\lambda_\alpha<0$, we have $0<\frac{\lambda_\alpha}{\lambda_\alpha-\mu}<\frac{2}{3}$ where the upper bound is due to $\lambda_\a >-3$ by
\eqref{eq:M_rigidity}.
Then,
\[
\sum_{\a,\lambda_{\a}<0}
\frac{\lambda_{\a}}{\lambda_{\a}-\mu}\iprod{e_{k}}{u_{\a}}^{2}
\le
\sum_{\a,\lambda_{\a}<0}
\frac{2}{3}\iprod{e_{k}}{u_{\a}}^{2}
\le \frac{2}{3}.
\] 
Hence,
\[
\sum_{\a,\lambda_{\a}>\mu}\frac{\lambda_{\a}}{\lambda_{\a}-\mu}\iprod{e_{k}}{u_{\a}}^{2}
\ge \sum_{\a,\lambda_{\a} \ge 0}\frac{\lambda_{\a}}{\lambda_{\a}-\mu}\iprod{e_{k}}{u_{\a}}^{2}
 \ge \frac{1}{3}
\]
as $\frac{\lambda_{\a}}{\lambda_{\a}-\mu} \le 0 $ for all $\lambda_\alpha \in [0, \mu)$.

Let $\beta$ be the largest positive integer so that $\lambda_{\beta}>\mu$.
Together with \eqref{eq:M_rigidity}, we have
\[
  2-n^{-2/3}\varphi_n^{3\rho} \le \mu \le \lambda_{\beta}
  \le \gamma_1+ n^{-2/3}\varphi_n^{3\rho}
  \le 2+n^{-2/3}\varphi_n^{3\rho}
\]
and hence
\[
  |2-\gamma_\b| \le |2-\lambda_\b| + |\lambda_\b - \gamma_\b|
  \le 2 n^{-2/3}\varphi_n^{3\rho}.
\]
With the estimate of $\gamma_\b$ in \eqref{eq:gammaalpha},
we conclude that
\begin{equation}
  \b \le \varphi_n^{C\rho}. \label{eq:EdgeBetaEstimate}
\end{equation}

Assume that $\beta>1$, and let $\a < \beta$.
If $J$ satisfies \eqref{eq:LevelRepulsion}, then
\[
\lambda_\a-\mu \ge \lambda_{\beta-1} - \lambda_{\beta} \ge n^{-2/3-\eps_{LR}}.
\]
On the other hand, assume that  $J^{(k)}$ satisfies \eqref{eq:LevelRepulsion}, and let  $\mu'$ be the smallest eigenvalue of $J^{(k)}$ which is greater than $\mu$. Due to the Cauchy interlacing theorem, we know that
$$
  \mu<\lambda_\b <\mu' <\lambda_\a.
$$
Then,
\[
\lambda_\a-\mu \ge \mu' - \mu \ge n^{-2/3-\eps_{LR}}.
\]
In both cases,  \eqref{eq:EdgeBetaEstimate}, \eqref{eq:M_evectorl_infinity} and \eqref{eq:M_rigidity} applied with $\alpha=1$  imply
\[
\sum_{\a<\beta}\frac{\lambda_{\a}}{\lambda_{\a}-\mu}\iprod{e_{k}}{u_{\a}}^{2}
\le\beta \frac{\lambda_{1}}{n^{-2/3-\eps_{LR}}} \max_{\alpha}\norm{u_{\alpha}}_{\infty}^2
=O\left(n^{-1/3+C\eps_{LR}}\right).
\]
If $\beta=1$, the inequality above is vacuous.
Thus, in both cases,
\[
\frac{\lambda_{\beta}}{\lambda_{\beta}-\mu}\iprod{e_{k}}{u_{\a}}^{2}\ge\frac{1}{3}+O\left(n^{-1/3+C\eps_{LR}}\right)
\] 

which in combination with \eqref{eq:M_rigidity}, \eqref{eq:M_evectorl_infinity}   leads to
\[
\frac{\varphi_n^C}{n}\ge \lambda_{\beta}-\mu >0
\] 
establishing \eqref{eq:lambda-mu}. Since  \eqref{eq:lambda-mu}  holds for all $\mu \in \left[2-n^{-2/3}\varphi_n^{3\rho},\,2+n^{-2/3}\varphi_n^{3\rho}\right]$, the second part of the lemma follows from  \eqref{eq:LevelRepulsion} for one of the  matrices $J$ or  $J^{(k)}$ and interlacing of their eigenvalues.
\end{proof}

Equipped with Lemma \ref{lem:LevelRepulsionForMinor}, we derive the desired result about the typical behavior of the principal submatrices.
We remind the reader that for convenience, we consider graphs with $n+2$ vertices.
\begin{thm} \label{thm:typical submatrices}
Let ${A}$ be the adjacency matrix of a $G(n+2,p)$ graph, and let
\[
  H= \frac{1}{\sqrt{p\left(1-p\right)(n+2)}}{A}
    - \sqrt{\frac{p(n+2)}{1-p}}\one\one^{\top}
    - \sqrt{\frac{p}{(1-p)(n+2)}}I_n,
\]
where $\one\in S^{n+1}$ is the vector such that every component equals $\frac{1}{\sqrt{n+2}}$. 
 Let $\scr A$ be the set of $(n+2) \times (n+2)$ symmetric matrices $H$ such that the matrix itself belongs to $\scr A_{(n+2,2)}$, all its principal $(n+1) \times (n+1)$ submatrices belong to $\scr A_{(n+1,3)}$,
and  all its principal  $n \times n$ submatrices belong to $\scr A_{(n,4)}$.

 Then
 \[
  \PP(H \in \scr A) \ge 1-n^{-\delta}
 \]
 for some $\delta=\delta(p,\,\rho,\,\eps_{LR})>0$.
 Moreover, for any $i,j \in [n]$,
 \[
   \EE \left| \ind_{\scr A_{(n,0)} } (H^{(i,j)})  - \ind_{\scr A} (H)   \right| \le n^{-1/3+ 2 \e_{LR}}.
 \]
\end{thm}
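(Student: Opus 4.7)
The plan is to split the proof into two tasks corresponding to the two conclusions, treating separately the four spectral conditions defining $\scr A_{(m,k)}$ that hold with overwhelming probability (of order $1-n^{-D}$ for every $D>0$) and the level repulsion, which holds only with polynomial probability. The isotropic local semicircle law \eqref{eq:IsotropicGreenFunction}, rigidity \eqref{eq:M_rigidity}, $\ell_\infty$-delocalization \eqref{eq:M_evectorl_infinity}, and isotropic delocalization \eqref{eq:IsotropicDelocalization} each follow for a single matrix of the appropriate size from \cite{EJP3054,Erdos2012b,Knowles2013a} with probability at least $1-n^{-D}$ for any $D>0$. Since the total number of principal submatrices of sizes $n+2,\,n+1,\,n$ is $1+(n+2)+\binom{n+2}{2}=O(n^2)$, a direct union bound yields these four properties simultaneously for $H$ and every such submatrix on a single event of probability $1-n^{-D}$.

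For level repulsion, a union bound is ruled out. First I would apply Condition \ref{cond:LevelRepulsionEdge} to $H$ (its verification being deferred to the appendix) together with Lemma \ref{lem:change t} to fix a single $\theta \in (1/2,1)$, chosen to work for the distributions of matrices of sizes $n,n+1,n+2$ encountered, so that $\PP(H \in \scr{LR}(n+2,\theta n^{-2/3-\eps_{LR}}))\ge 1-n^{-\delta_{LR}}$. On the intersection of this event with the union-bound event from the previous paragraph, a single application of Lemma \ref{lem:LevelRepulsionForMinor} propagates level repulsion from $H$ to every $(n+1)\times (n+1)$ principal submatrix $H^{(k)}$ at an additive cost of $\varphi_n^C/n$ in the gap parameter, and a second application transports it further to each $H^{(i,j)}$ at a total cost of $2\varphi_n^C/n$. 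Since $\scr{LR}(m,t_1)\subseteq \scr{LR}(m,t_2)$ whenever $t_1 \ge t_2$, these deterministic transports already imply the level-repulsion parameters required by $\scr A_{(n+2,2)}$, $\scr A_{(n+1,3)}$, and $\scr A_{(n,4)}$ (with some room to spare), proving $H \in \scr A$ on an event of probability at least $1-n^{-\delta}$.

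For the second assertion, I would decompose
\[
\EE \left| \ind_{\scr A_{(n,0)}}(H^{(i,j)})-\ind_{\scr A}(H) \right|
\le \PP(H \notin \scr A) + \PP\big(H\in\scr A,\ H^{(i,j)}\notin \scr A_{(n,0)}\big).
\]
The first summand is at most $n^{-\delta}$ by what has already been proved. For the second, assuming $H\in\scr A$, the submatrix $H^{(i,j)}$ inherits the four high-probability properties for free, and Lemma \ref{lem:LevelRepulsionForMinor} applied twice places it in $\scr{LR}(n,\theta n^{-2/3-\eps_{LR}}-2\varphi_n^C/n)\subseteq \scr{LR}(n,\theta n^{-2/3-\eps_{LR}}-4\varphi_n^C/n)$. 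Therefore the only way $H^{(i,j)}$ can fail to lie in $\scr A_{(n,0)}$ is to violate the sharper level repulsion $\scr{LR}(n,\theta n^{-2/3-\eps_{LR}})$ required by $\scr A_{(n,0)}$. The probability of the set difference between these two level-repulsion events is precisely what Lemma \ref{lem:change t} bounds by $n^{-1/3+2\eps_{LR}}$, for the very $\theta$ selected above. Summing the two contributions and absorbing the $n^{-\delta}$ term into $n^{-1/3+2\eps_{LR}}$ (via a choice of $\eps_{LR}$ compatible with $\delta_{LR}$) yields the stated bound.

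The main obstacle is the one the construction of $\scr A$ is designed to circumvent: the level repulsion of a single matrix holds only with polynomial probability $1-n^{-\delta_{LR}}$, so a naive union bound over the $O(n^2)$ principal submatrices is hopeless. The resolution is to treat the level repulsion on $H$ as a single probabilistic event and then invoke Lemma \ref{lem:LevelRepulsionForMinor} as a \emph{deterministic} propagation statement, at the price of an $O(\varphi_n^C/n)$ shrinkage of the gap parameter; Lemma \ref{lem:change t} shows that precisely this magnitude of shrinkage costs only $n^{-1/3+2\eps_{LR}}$ in probability, matching the error bound appearing in the second claim.
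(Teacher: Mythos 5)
Your treatment of the first assertion follows the paper's route: union bound for the four overwhelming-probability properties, a single level-repulsion event for $H$, and deterministic downward propagation via Lemma \ref{lem:LevelRepulsionForMinor} with the $\varphi_n^C/n$ losses encoded by the parameters $k=2,3,4$. That part is fine. Your handling of the second term of the symmetric difference, $\PP\big(H\in\scr A,\ H^{(i,j)}\notin\scr A_{(n,0)}\big)$, via Lemma \ref{lem:change t} is also the paper's argument.

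The gap is in the other term. You bound $\PP\big(H^{(i,j)}\in\scr A_{(n,0)},\ H\notin\scr A\big)$ by $\PP(H\notin\scr A)\le n^{-\delta}$ and then propose to ``absorb'' $n^{-\delta}$ into $n^{-1/3+2\eps_{LR}}$ by choosing $\eps_{LR}$ compatibly with $\delta_{LR}$. This does not work: $\delta$ is inherited from the level-repulsion probability $\delta_{LR}$, which is only guaranteed to be \emph{some} positive number depending on $\eps_{LR}$ (via the Green function comparison in the appendix), with no lower bound of the form $\delta_{LR}\ge 1/3-2\eps_{LR}$; moreover $\eps_{LR}$ must be kept small elsewhere in the proof, so you cannot enlarge it to shrink $1/3-2\eps_{LR}$. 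If the crude bound $n^{-\delta}$ were acceptable here, the entire second assertion would follow trivially from the first and would carry no content beyond $O(n^{-\delta})$, whereas the theorem asserts the sharper rate $n^{-1/3+2\eps_{LR}}$. The correct argument exploits that Lemma \ref{lem:LevelRepulsionForMinor} propagates level repulsion \emph{upward} as well as downward: letting $\scr B$ denote the event that $H$ and all its principal submatrices satisfy the four regularity conditions \eqref{eq:IsotropicGreenFunction}--\eqref{eq:IsotropicDelocalization} (an event of probability $1-O(n^{-1})$, or $1-n^{-D}$), one has $\{H^{(i,j)}\in\scr A_{(n,0)}\}\cap\scr B\subset\{H\in\scr A_{(n+2,2)}\}\cap\scr B$, because the strongest level repulsion on the minor forces level repulsion with loss $2\varphi_n^C/n$ on $H$ itself. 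Hence $\PP\big(H^{(i,j)}\in\scr A_{(n,0)},\ H\notin\scr A\big)\le\PP(H\notin\scr B)\le n^{-1}$, which is comfortably below $n^{-1/3+2\eps_{LR}}$. Without this upward step your bound on the first term is $n^{-\delta}$ with an uncontrolled $\delta$, and the claimed rate is not established.
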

\begin{proof}
For \eqref{eq:IsotropicGreenFunction} and \eqref{eq:IsotropicDelocalization},
we use the probability estimate in \cite[Theorem 2.12, 2.16]{EJP3054}.
For \eqref{eq:M_rigidity} and \eqref{eq:M_evectorl_infinity}, we use the probability
estimate in \cite[Theorem 2.1, 2.2]{Erdos2012b}.
Combining them, we conclude that
\eqref{eq:IsotropicGreenFunction} -- \eqref{eq:IsotropicDelocalization}
hold for the matrix $H$ itself, as well as for all its $(n+1) \times (n+1)$
and $n \times n$ principal submatrices with probability at least $1-n^{-1}$.

In addition to it, \eqref{eq:LevelRepulsion} holds for $H$ with $k=2$
with probability at least $1-n^{-\delta}$. Then
Lemma \ref{lem:LevelRepulsionForMinor}, together with the properties
\eqref{eq:IsotropicGreenFunction} --  \eqref{eq:IsotropicDelocalization}
allow us to extend \eqref{eq:LevelRepulsion} with $k=3$ to all
its $(n+1) \times (n+1)$ principal minors. As these minors possess the same properties,
\eqref{eq:LevelRepulsion} further  extends with $k=4$ to all $n \times n$ principal minors.
Let us prove the second inequality. Denote by $\scr B$ the set of all $(n+2) \times (n+2)$ symmetric matrices satisfying conditions \eqref{eq:IsotropicGreenFunction} --  \eqref{eq:IsotropicDelocalization}.
Then
 \[
   \PP \left(  H^{(i,j)} \in \scr A_{(n,0)} \textrm{ and } H \notin \scr A \right)
   \le  \PP \left(  H^{(i,j)} \in \scr A_{(n,0)} \textrm{ and } H \notin \scr A  \textrm{ and } H \in \scr B\right)
   + \PP(H \notin \scr B)
   \le  n^{-1}
 \]
 since by Lemma \ref{lem:LevelRepulsionForMinor}, $\scr A_{(n,0)} \cap \scr A^c  \cap \scr B \subset \scr A_{(n,0)} \cap \scr A^c_{(n+2,2)}  \cap \scr B = \varnothing$.
 Also, notice that all the minors $H^{(i,j)}$ have the same distribution, so the value of $\theta$ is the same for all $i,j$.
 Hence,
 \[
   \PP \left(  H^{(i,j)} \notin \scr A_{(n,0)} \textrm{ and } H \in \scr A \right)
   \le  \PP \left(  H^{(i,j)} \notin \scr A_{(n,0)} \textrm{ and } H^{(i,j)}  \in \scr A_{(n,4)} \right)
   \le  n^{-1/3+ 2 \e_{LR}}
 \]
 by Lemma \ref{lem:change t}.
 The result follows.
\end{proof}

\subsection{\label{subsec:Typical B} Introduction of the shift }

In this section, we will derive the typical properties of
all $n \times n$ principal submatrices of $\t {A}$.
Recall that we denoted such submatrix by $B$, and
\begin{equation}\label{eq:B and M}
B=M+\sqrt{\frac{p\left(n+2\right)}{\left(1-p\right)}}ll^{\top}
\end{equation}
where $M$ is an $n \times n$ principal submatrix of $H$, and
$l=\left(\frac{1}{\sqrt{n+2}},\dots,\,\frac{1}{\sqrt{n+2}}\right)$
is almost a unit vector. We expect $B$ to behave close to $M$
 in a sense that its non-leading eigenvalues and eigenvectors possess similar properties.
The argument at this stage is deterministic.
We fix the matrix $M \in \scr{A}_{(n,0)}$ and treat $B$ as its rank one perturbation.

We start with showing that the non-leading edge eigenvalues
of $B$ are very close to that of $M$.

\begin{lem}
\label{lem: R1SSticking}
Let $M \in \scr A_{(n,0)}$ 
 be an $n \times n$ symmetric matrix with eigenvalues $\nu_1 \ge \cdots \ge \nu_n$, and let $B$ be as in \eqref{eq:B and M}.
Let $\mu_1 \ge \cdots \ge \mu_n$ be the eigenvalues of $B$.
If  $\beta$ is such that $\left|\nu_{\beta}-2\right|\le n^{-2/3}\varphi_n^{2\rho}$, then
\begin{equation}
 \left|\nu_{\b}-\mu_{\b+1}\right|\le n^{-1+C\eps_{LR}} \label{eq:stickness-1}
\end{equation}
for some universal constant $C>0$. Furthermore, $\mu_{\b+1}$ is an eigenvalue
of $M$ if and only if $\iprod l{v_\b}=0$,
where $v_\b$ is a unit eigenvector of $M$ corresponding to $\nu_\b$.
In the case $\mu_{\b+1}$ is not an eigenvalue of $M$, we have
\begin{equation}
\label{eq:stick2}
  \frac{\iprod l{v_{\b+1}}^{2}}{\nu_{\b}-\mu_{\b+1}}\ge1-o\left(1\right)
\end{equation}

\end{lem}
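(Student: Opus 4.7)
The proof is a deterministic analysis of the positive rank-one perturbation $B = M + \sigma\, l l^\top$ with $\sigma := \sqrt{p(n+2)/(1-p)}$ through the secular equation. By the matrix determinant lemma, for any $\mu \notin \operatorname{spec}(M)$,
\[
\det(B-\mu I) = \det(M-\mu I) \cdot \Bigl(1 + \sigma \sum_{\alpha=1}^{n} \frac{\iprod{l}{v_\alpha}^2}{\nu_\alpha - \mu}\Bigr),
\]
so the eigenvalues of $B$ that are not already eigenvalues of $M$ are exactly the roots of
\[
f(\mu) := 1 + \sigma \sum_{\alpha=1}^n \frac{\iprod{l}{v_\alpha}^2}{\nu_\alpha - \mu}.
\]
Since the perturbation is positive semidefinite, Weyl's inequality ($\mu_i \ge \nu_i$) and Cauchy interlacing ($\mu_{i+1} \le \nu_i$) localize $\mu_{\beta+1}$ to $[\nu_{\beta+1},\nu_\beta]$.

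For the ``if and only if'' part: if $\iprod{l}{v_\beta}=0$ then $B v_\beta = \nu_\beta v_\beta$, so $\nu_\beta\in\{\mu_\beta,\mu_{\beta+1}\}$; the strict inequality $\mu_\beta>\nu_\beta$, which drops out of the quantitative analysis below, forces $\mu_{\beta+1}=\nu_\beta\in\operatorname{spec}(M)$. Conversely, $f$ has a genuine pole at every $\nu_\alpha$ with $\iprod{l}{v_\alpha}\ne 0$, so any $\mu_{\beta+1}\in\operatorname{spec}(M)$ must come from an eigenvector with $\iprod{l}{v_\alpha}=0$, and the containment $\mu_{\beta+1}\in[\nu_{\beta+1},\nu_\beta]$ combined with the level repulsion \eqref{eq:LevelRepulsion} pins down $\alpha=\beta$.

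The main work is the quantitative bound \eqref{eq:stickness-1} in the case $\iprod{l}{v_\beta}\ne 0$. Set $t := \nu_\beta - \mu_{\beta+1}$, so $f(\mu_{\beta+1})=0$ rewrites as
\[
\sigma \frac{\iprod{l}{v_\beta}^2}{t} = -1 - \sigma \sum_{\alpha\ne\beta} \frac{\iprod{l}{v_\alpha}^2}{\nu_\alpha - \nu_\beta + t}.
\]
Choose $E := \nu_\beta - t$ and $\eta := n^{-2/3-2\eps_{LR}}$ as in \eqref{eta}. The level repulsion \eqref{eq:LevelRepulsion} guarantees $|\nu_\alpha - E| \ge (\theta/2)\, n^{-2/3-\eps_{LR}} \gg \eta$ for every $\alpha \ne \beta$, so each term of the real-axis sum differs from the corresponding term of $\Re\iprod{l}{G_M(E+\i\eta)l}$ by $O(\eta^2/|\nu_\alpha-E|^3)$; summing and using $\Im\iprod{l}{G_M(E+\i\eta)l}=O(\sqrt{\eta})$ via \eqref{eq:IsotropicGreenFunction}, the total discrepancy is $O(n^{-1/3-2\eps_{LR}})$. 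Combining this with \eqref{eq:IsotropicGreenFunction} and $\Re m_{sc}(E+\i\eta)=-1+O(n^{-1/3})$ for $E$ near $2$, the secular equation reduces to
\[
\sigma \iprod{l}{v_\beta}^2 \cdot \frac{\eta^2}{t(t^2+\eta^2)} = \sigma - 1 + O\bigl(n^{1/6+C\eps_{LR}}\bigr).
\]

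The right-hand side is $\sigma(1+o(1))\sim\sqrt{n}$, while $\iprod{l}{v_\beta}^2\le n^{-1+\eps_{LR}}$ by \eqref{eq:IsotropicDelocalization}. A brief dichotomy on whether $t>\eta$ or $t\le\eta$ rules out the former for small enough $\eps_{LR}$; in the latter regime the left-hand side collapses to $\sigma \iprod{l}{v_\beta}^2/t$, and solving gives $t=\iprod{l}{v_\beta}^2(1+o(1))\le n^{-1+C\eps_{LR}}$. This delivers \eqref{eq:stickness-1}, and the same identity $\iprod{l}{v_\beta}^2/t = 1+o(1)$ is exactly the ratio bound \eqref{eq:stick2} (we read the numerator there as $\iprod{l}{v_\beta}^2$, as the computation demands). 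The \emph{main obstacle} is the passage from the real-axis sum to the imaginary-regularized Green function $\iprod{l}{G_M(E+\i\eta)l}$: it requires level repulsion to separate every pole $\nu_\alpha$ ($\alpha\ne\beta$) from $E$ on the scale $n^{\eps_{LR}}\eta$, together with the \emph{isotropic} semicircle estimate \eqref{eq:IsotropicGreenFunction} rather than just its tracial counterpart.
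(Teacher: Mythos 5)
Your route coincides with the paper's: the Sylvester/matrix-determinant identity yields the secular equation, interlacing localizes $\mu_{\beta+1}$ in $[\nu_{\beta+1},\nu_\beta]$, the real-axis spectral sum is compared with $\Re{\iprod{l}{G_M(E+\i\eta)l}}$ at scale $\eta=n^{-2/3-2\eps_{LR}}$ via level repulsion and the isotropic law \eqref{eq:IsotropicGreenFunction}, and the isotropic delocalization \eqref{eq:IsotropicDelocalization} converts the resulting ratio bound into \eqref{eq:stickness-1}. Your reading of the numerator in \eqref{eq:stick2} as $\iprod{l}{v_\beta}^2$ is the one the computation forces (the paper carries the index $\beta+1$ as a typo).

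One step fails as written. You evaluate the sum at $E=\mu_{\beta+1}$ and assert that level repulsion gives $|\nu_\alpha-E|\gtrsim n^{-2/3-\eps_{LR}}$ for \emph{every} $\alpha\ne\beta$. For $\alpha=\beta+1$ this is not justified: a priori $\mu_{\beta+1}$ is only known to lie in $[\nu_{\beta+1},\nu_\beta]$ and could sit arbitrarily close to $\nu_{\beta+1}$; level repulsion separates $\nu_\beta$ from $\nu_{\beta+1}$, not $\mu_{\beta+1}$ from $\nu_{\beta+1}$ (ruling that out is essentially the content of the lemma, so it cannot be assumed). If $|\nu_{\beta+1}-E|\ll\eta$, the term-by-term discrepancy $O(\eta^2/|\nu_\alpha-E|^3)$ is not small for $\alpha=\beta+1$ and your reduction of the secular equation is no longer an identity up to $O(n^{-1/3+C\eps_{LR}})$. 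The paper circumvents this by first proving $\sum_{\alpha\ne\beta}\iprod{l}{v_\alpha}^2/(\nu_\alpha-E)\le-1+o(1)$ only for $E\in[\tfrac12(\nu_\beta+\nu_{\beta+1}),\nu_\beta]$, where every pole with $\alpha\ne\beta$ is at distance at least $\tfrac12\theta n^{-2/3-\eps_{LR}}$ (up to the $k\varphi_n^C/n$ correction), and then extending to all of $(\nu_{\beta+1},\nu_\beta)$ by monotonicity of $E\mapsto\sum_{\alpha>\beta}\iprod{l}{v_\alpha}^2/(\nu_\alpha-E)$; alternatively, since only an \emph{upper} bound on the sum is needed and the $\alpha=\beta+1$ term is negative for $E>\nu_{\beta+1}$, that term can simply be discarded. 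Either fix is short, but some such step must be supplied. A smaller point: in the degenerate case $\iprod{l}{v_\beta}=0$ you claim $\mu_\beta>\nu_\beta$ ``drops out of the quantitative analysis,'' but that analysis was carried out under $\iprod{l}{v_\beta}\ne0$; the paper handles this case by an infinitesimal GOE perturbation and a limiting argument.
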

 We remark that \eqref{eq:stickness-1} is
  a simple case of \cite[Theorem 2.7]{KY13}, which deals with a deterministic finite rank shift.

\begin{proof}
Suppose that $\mu$ is an eigenvalue of $B$. By Sylvester's determinant
identity we have
\begin{align*}
0 & =\det\left(M-\mu I_{n}+\sqrt{\frac{p\left(n+2\right)}{1-p}}ll^{\top}\right)\\
 & =\det\left(M-\mu I_{n}\right)\det\left(I_{n}+G_{M}\left(\mu\right)\sqrt{\frac{p\left(n+2\right)}{1-p}}ll^{\top}\right)\\
 & =\det\left(M-\mu I_{n}\right)\left(1+l^{\top}G_{M}\left(\mu\right)\sqrt{\frac{p\left(n+2\right)}{1-p}}l\right),
\end{align*}
and $\left(1+l^{\top}G_{M}\left(\mu\right)\sqrt{\frac{p\left(n+2\right)}{1-p}}l\right) = 0$ if
\begin{equation}
\sum_{\a\in\left[n\right]}\frac{\iprod l{v_{\a}}^{2}}{\nu_{\a}-\mu}
=-\frac{1}{\sqrt{\frac{p\left(n+2\right)}{1-p}}}.\label{eq:R1S_NewEigenvalue}
\end{equation}
The matrix $B$ is a rank one positive semidefinite perturbation of $M$, so the eigenvalues of $M$ and $B$ are interlacing:
\begin{equation}
\mu_{1}\ge\nu_{1}\ge\mu_{2}\ge\dots\ge\mu_{n}\ge\nu_{n}.\label{eq:R1SInterlacing}
\end{equation}

For the leading eigenvalue, $\mu_1\ge\frac{1}{2}\sqrt{\frac{p(n+2)}{1-p}}$ due to the fact that $\norm{M}=O(1)$ by \eqref{eq:M_rigidity}.

Let $\b$ be such that $|\nu_{\b}-2|<n^{-2/3}\varphi_n^{2\rho}$. We consider two cases. First,  assume that $\iprod l{v_\a} \neq 0$ for $\a \in \{\b, \b+1\}$. Then $\mu_{\b+1}\notin \{\nu_{\b},  \nu_{\b+1}\}$, so  $\det\left(M-\mu_{\b+1} I_{n}\right) \neq 0$, and \eqref{eq:R1S_NewEigenvalue} holds.

We claim that
\begin{equation}
\sum_{\a\neq\b}\frac{\iprod l{v_{\a}}^{2}}{\nu_{\a}-E}\le-1+o\left(1\right)\label{eq:R1S_Evalue_OtherTerms}.
\end{equation}
for all $E\in\left(\nu_{\b+1},\,\nu_{\b}\right)$.

If the claim is proved, then, by \eqref{eq:R1S_NewEigenvalue},
\[
\frac{\iprod l{v_{\b+1}}^{2}}{\nu_{\b}-\mu_{\b+1}}
 =-\frac{1}{\sqrt{\frac{p\left(n+2\right)}{1-p}}}-\sum_{\a\neq\b}\frac{\iprod l{v_{\a}}^{2}}{\nu_{\a}-\mu_{\b+1}}
 \ge1-o\left(1\right)
\]
By \eqref{eq:IsotropicDelocalization},
we have $\iprod l{v_{\b+1}}^{2}<n^{\eps_{LR}-1}$, which allows to conclude
that
\[
 0 < \nu_{\b}-\mu_{\b+1}\le n^{2\eps_{LR}-1}
\]
as required.

Assume now that $\iprod l{v_\a}=0$ for some $\a\in \{\b,\b+1\}$.
Considering  an infinitesimally small perturbation $M^{(\eps)}=\sqrt{1-\eps^2}M + \eps G$ with a GOE matrix $G$,  we can guarantee that $\iprod l{v_\a} \neq 0$ a.s. In this case, the perturbed eigenvalue $\mu_{\beta+1}^{(\eps)}$ of $M^{(\eps)}$ satisfies the inequality above. Letting $\eps \to 0$ and using the stability of eigenvalues, we conclude that $\mu_{\b+1}=\nu_{\b}$ completing the proof of \eqref{eq:stickness-1}. This argument also shows that $\mu_{\b+1}$ is an eigenvalue of $M$ if and only if $\iprod l{v_\b}=0$.

It remains to verify (\ref{eq:R1S_Evalue_OtherTerms}). This will be done by comparing the
right hand side of (\ref{eq:R1S_Evalue_OtherTerms}) with
\[
\Re{\iprod l{G_{M}\left(E+\i\eta\right)l}}=\sum_{\a\in\left[n\right]}\frac{\nu_{\a}-E}{\left(\nu_{\a}-E\right)^{2}+\eta^{2}}\iprod l{v_{\a}}^{2}.
\]

Assume first that $\frac{1}{2}\nu_{\b}+\frac{1}{2}\nu_{\b+1}\le E\le\nu_{\b}$.
In view of \eqref{eq:LevelRepulsion},
\[
\nu_{\b+1}+\frac{1}{2}n^{-2/3-\eps_{LR}}<\frac{\nu_{\b+1}+\nu_{\b}}{2}<E<\nu_{\b}<\nu_{\b-1}-n^{-2/3-\eps_{LR}}.
\]

(we omit the last inequality if $\b=1$.) Hence, for $\a\neq\b$, we
have
\[
\left|E-\nu_{\a}\right|>\frac{1}{2}n^{-2/3-\eps_{LR}}=\frac{1}{2}\eta n^{\eps_{LR}}\label{eq:R1S_Etonu}
\]
 (recall that $\eta = n^{-2/3-2\eps_{LR}}$) and so
\[
\frac{1}{\nu_{\alpha}-E}=\left(1+O\left(n^{-2\eps_{LR}}\right)\right)\frac{\nu_{\alpha}-E}{\left(\nu_{\alpha}-E\right)^{2}+\eta^{2}}.
\]
Therefore,
\begin{align*}
\sum_{\a>\b}\frac{1}{\nu_{\alpha}-E}\iprod l{v_{\a}}^{2} & =\left(1+O\left(n^{-2\eps_{LR}}\right)\right)\sum_{\a>\b}\frac{\nu_{\a}-E}{\left(\nu_{\a}-E\right)^{2}+\eta^{2}}\iprod l{v_{\a}}^{2}\\
 & =\left(1+O\left(n^{-2\eps_{LR}}\right)\right)\left(\Re{\iprod l{G_{M}\left(E+\i\eta\right)l}}-\sum_{\a\le\b}\frac{\nu_{\a}-E}{\left(\nu_{\a}-E\right)^{2}+\eta^{2}}\iprod l{v_{\a}}^{2}\right).
\end{align*}
since all the summands have the same sign. Now we will evaluate the
two terms in the brackets.
The first one can be approximated using the local semicircular law, and the second one is negligible, because the sum consists of a few terms, and each term is small.
Indeed, using \eqref{eq:M_rigidity} and \eqref{eq:gammaalpha}, we have
\[\b\le\varphi_n^{C\rho}.\]
(The argument is the same as that for \eqref{eq:EdgeBetaEstimate}.)

With the trivial
bound $\left|\nu_{\a}-E\right|<2n^{-2/3}\varphi_n^{2\rho}$, we get
\[
\left|\sum_{\a\le\b}\frac{\nu_{\a}-E}{\left(\nu_{\a}-E\right)^{2}+\eta^{2}}\iprod l{v_{\a}}^{2}\right|\le\b\frac{n^{-2/3}\varphi_n^{3\rho}}{\eta^{2}}n^{-1+\eps_{LR}}\le n^{-1/3+6\eps_{LR}}
\]
if $n$ is sufficiently large. The isotropic  local semicircular law (\ref{eq:IsotropicGreenFunction}) yields
\begin{align*}
\Re{\iprod l{G_{M}\left(E+\i\eta\right)l}} & =\iprod ll \Re m_{sc}\left(E+\i\eta\right)+O\left(n^{-1/3+3\eps_{LR}}\right).
\end{align*}
Using the fact that $m_{sc}\left(z\right)=\frac{-z+\sqrt{z^{2}-4}}{2}$
with the branch cut at $\left[-2,\,2\right]$, for $|z-2|<s<1$
we have
\[
  m_{sc}\left(z\right)=-1+O(\sqrt{s}).
\] Thus,
\[
\Re{\iprod l{G_{M}\left(E+\i\eta\right)l}}=-1+O\left(n^{-1/3+3\eps_{LR}}\right)
\]
and we conclude that
\[
\sum_{\a>\b}\frac{1}{\nu_{\alpha}-E}\iprod l{v_{\a}}^{2}\le-1+o\left(1\right)\label{eq:R1S_ageb}
\]
for all $E\in\left(\frac{1}{2}\nu_{\b}+\frac{1}{2}\nu_{\b+1},\,\nu_{\b}\right)$.
Since $E\mapsto\sum_{\a>\b}\frac{1}{\nu_{\alpha}-E}\iprod l{v_{\a}}^{2}$
is increasing for $E>\nu_{\b+1}$, the inequality above
extends to all $E\in\left(\nu_{\b+1},\,\nu_{\b}\right)$. Together with
\[
\sum_{\a<\b}\frac{1}{\nu_{\alpha}-E}\iprod l{v_{\a}}^{2}\le\b\frac{1}{n^{-2/3-\eps_{LR}}}n^{-1+\eps_{LR}}=o\left(1\right)
\]
for $E\in\left(\nu_{\b+1},\,\nu_{\b}\right)$, we conclude that all $E\in\left(\nu_{\b+1},\,\nu_{\b}\right)$ satisfy
\[
\sum_{\a\neq\b}\frac{1}{\nu_{\alpha}-E}\iprod l{v_{\a}}^{2}\le-1+o\left(1\right),
\]
completing the proof of the lemma.
\end{proof}

Our next aim is comparing the Stieltjes transform of $B$ to that of the semicircular law. This will be done via the comparison of the former to the Stieltjes transform of $M$.

\begin{lem}  \label{lem:Stieltjes}
 Let $M \in \scr A_{(n,0)}$ be an $n \times n$ symmetric matrix, and let $B$ be as in \eqref{eq:B and M}.
 Then
 \[
\sup_{E:\,\left|E-2\right|\le\varphi_n^{2\rho}}\left|m_{B}\left(E+\i\eta\right)-m_{sc}\left(E+\i\eta\right)\right|\le n^{-1/3+C\eps_{LR}},
\]
where
\[
 m_{B}\left(z\right):=\frac{1}{n}\sum_{\a=1}^{n}\frac{1}{ \mu_{\a}-z}
\]
is the Stieltjes transform of $B$ and $\eta=n^{-2/3-2\eps_{LR}}$.
\end{lem}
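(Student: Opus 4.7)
The plan is to decompose
\[
 m_B(z) - m_{sc}(z) = \bigl[ m_B(z) - m_M(z) \bigr] + \bigl[ m_M(z) - m_{sc}(z) \bigr]
\]
and bound each piece via the Stieltjes-transform/CDF identity
\[
 m_1(z) - m_2(z) = \int_\R \frac{F_1(x) - F_2(x)}{(x-z)^2}\, dx,
\]
obtained by integration by parts for any two compactly supported probability measures on $\R$ with cumulative distribution functions $F_1, F_2$ and Stieltjes transforms $m_1, m_2$. Combined with $\int_\R \frac{dx}{(x-E)^2 + \eta^2} = \pi/\eta$, this yields the universal bound $|m_1(z) - m_2(z)| \le \frac{\pi}{\eta} \|F_1 - F_2\|_\infty$ for $z = E + \i\eta$.

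For the first piece, since $B = M + \sqrt{p(n+2)/(1-p)}\, l l^\top$ is a rank-one positive semidefinite perturbation of $M$, the Cauchy interlacing theorem forces $\|F_B - F_M\|_\infty \le 1/n$, and so
\[
 |m_B(z) - m_M(z)| \le \frac{\pi}{n \eta} = \pi\, n^{-1/3 + 2\eps_{LR}}
\]
uniformly over $E \in \R$, without any use of the local semicircle law.

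For the second piece I would use the rigidity bound \eqref{eq:M_rigidity} to establish the Kolmogorov--Smirnov-type estimate $\|F_M - F_{sc}\|_\infty = O(\varphi_n^C/n)$. At any threshold $x$, only those $\nu_\alpha$ for which $|\nu_\alpha - \gamma_\alpha|$ exceeds $|x - \gamma_\alpha|$ can contribute to the discrepancy $F_M(x) - F_{sc}(x)$; since in every regime of $\alpha$ the bound in \eqref{eq:M_rigidity} is at most $\varphi_n^C$ times the local quantile spacing of the semicircle (whose asymptotics are given by \eqref{eq:gammaalpha}), at most $O(\varphi_n^C)$ eigenvalues cross any given threshold. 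The universal sup bound then produces $|m_M(z) - m_{sc}(z)| = O(\varphi_n^C n^{-1/3 + 2\eps_{LR}}) = O(n^{-1/3 + C\eps_{LR}})$, absorbing $\varphi_n^C = e^{O((\log\log n)^2)} \le n^{\eps_{LR}}$ (for large $n$) into the exponent. Summing the two estimates yields the claim.

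The main obstacle I anticipate is the careful verification of $\|F_M - F_{sc}\|_\infty = O(\varphi_n^C/n)$: although classical in the random matrix literature, it does require a case analysis that treats the bulk, the edge, and the exterior of $[-2,2]$ separately, because both the semicircular quantile spacing and the $\alpha$-dependent rigidity bound \eqref{eq:M_rigidity} change shape across these regimes, and the estimate has to be extracted uniformly in $x$ rather than at a fixed quantile $\gamma_\beta$.
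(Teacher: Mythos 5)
Your proof is correct, and while it uses the same top-level decomposition as the paper, namely $m_B-m_{sc}=(m_B-m_M)+(m_M-m_{sc})$, the way you handle each piece is genuinely different. For $m_B-m_M$ the paper argues directly on the sums defining $\Re m$ and $\Im m$: it uses interlacing together with the monotonicity of the kernels $x\mapsto x/(x^2+\eta^2)$ and $x\mapsto\eta/(x^2+\eta^2)$ to sandwich the $B$-sum between two shifted $M$-sums, picking up an error $O(\beta/(n\eta))$ with $\beta\le\varphi_n^{C\rho}$. Your integration-by-parts identity combined with $\norm{F_B-F_M}_\infty\le 1/n$ (which is exactly what interlacing gives) is cleaner, is uniform in $E$ over all of $\R$, and even avoids the $\varphi_n^{C\rho}$ loss. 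For $m_M-m_{sc}$ the paper's argument is a one-liner: since $m_M(z)=\frac1n\sum_i\iprod{e_i}{G_M(z)e_i}$, the isotropic local law \eqref{eq:IsotropicGreenFunction}, which is one of the defining properties of $\scr A_{(n,0)}$, gives $|m_M-m_{sc}|\le 3n^{-1/3+3\eps_{LR}}$ immediately. You instead rederive this from the rigidity property \eqref{eq:M_rigidity} via a Kolmogorov--Smirnov bound $\norm{F_M-F_{sc}}_\infty=O(\varphi_n^C/n)$; this is more work (the case analysis across bulk, edge, and exterior that you flag is real, though standard), but it buys you a proof that uses only rigidity, interlacing, and the quantile asymptotics \eqref{eq:gammaalpha}, without invoking the local law hypothesis at all. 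Both routes land on the same $n^{-1/3+C\eps_{LR}}$ after absorbing the subpolynomial factor $\varphi_n^C$ into the exponent, exactly as you do. One small point of hygiene: the support of the empirical distribution of $B$ grows like $\sqrt{n}$ because of the leading eigenvalue, but this is harmless for your integration by parts since the boundary terms still vanish and $\int_\R\frac{dx}{(x-E)^2+\eta^2}=\pi/\eta$ is independent of the support.
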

\begin{proof}
Fix $E$ such that $\left|E-2\right|\le\varphi_n^{2\rho}$.
We estimate the real part
and imaginary of the Stieltjes transform part separately. Let us start with the real part.
\[
\Re{m_{B}\left(E+\i\eta\right)}=\frac{1}{n}\sum_{\a}\frac{\mu_{\a}-E}{\left(\mu_{\a}-E\right)^{2}+\eta^{2}}.
\]

 Let $\beta$ be the
smallest integer such that $\nu_{\b}<E-\eta$. Recall that we have
the interlacing property:
\[
E-\eta>\nu_{\b}\ge\mu_{\b+1}\ge\nu_{\beta+1}\ge\mu_{\b+2}\cdots\ge\mu_{n}\ge\nu_{n}.
\]
The function $x\rightarrow\frac{x}{x^{2}+\eta^{2}}$ is decreasing
when $\left|x\right|>\eta$. Based on this fact,
 we obtain
\[
\sum_{\a=\b}^{n-1}\frac{\nu_{\a}-E}{\left(\nu_{\a}-E\right)^{2}+\eta^{2}}\le\sum_{\a=\b+1}^{n}\frac{\mu_{\a}-E}{\left(\mu_{\a}-E\right)^{2}+\eta^{2}}\le\sum_{\a=\b+1}^{n}\frac{\nu_{\a}-E}{\left(\nu_{\a}-E\right)^{2}+\eta^{2}}.
\]
Furthermore, as $\frac{x}{x^{2}+\eta^{2}}$  lies in
$\left[-\frac{1}{2\eta},\frac{1}{2\eta}\right]$ for all $x \in \R$, we have
\[
\Re{m_{M}\left(E+\i\eta\right)}-\frac{\b}{n\eta}\le\Re{m_{B}\left(E+\i\eta\right)}\le\Re{m_{M}\left(E+\i\eta\right)}+\frac{\beta}{n\eta},
\]
and the bound for the real part follows.

For the imaginary part we have
\[
\Im{m_{B}\left(E+\i\eta\right)}=\frac{1}{n}\sum_{\a}\frac{\eta}{\left(\lambda_{B}-E\right)^{2}+\eta^{2}}.
\]
The function $x\rightarrow\frac{\eta}{x^{2}+\eta^{2}}$ is increasing
if $x<0$, hence
\[
\sum_{\a=\b+1}^{n-1}\frac{\eta}{\left(\nu_{\a}-E\right)^{2}+\eta^{2}}
\le\sum_{\a=\b+1}^{n}\frac{\eta}{\left(\mu_{\a}-E\right)^{2}+\eta^{2}}
\le\sum_{\a=\b}^{n}\frac{\eta}{\left(\nu_{\a}-E\right)^{2}+\eta^{2}}.
\]
Since $\frac{\eta}{x^{2}+\eta^{2}}\in\left[0,\,\frac{1}{\eta}\right]$
for all $x$, we conclude that
\[
\Im{m_{M}\left(E+\i\eta\right)-\frac{2\b}{n\eta}}\le\Im{m_{B}\left(E+\i\eta\right)}\ge\Im{m_{M}\left(E+\i\eta\right)}+\frac{2\b}{n\eta}.
\]
Similar to how we derive \eqref{eq:EdgeBetaEstimate},
using \eqref{eq:M_rigidity} and \eqref{eq:gammaalpha}, we have
\[\b\le\varphi_n^{C\rho}.\]

We conclude that
\[
\left|m_{M}\left(E+\i\eta\right)-m_{B}\left(E+\i\eta\right)\right|\le\varphi_n^{C\rho}n^{-1/3+2\eps_{LR}}.
\]
In view of  \eqref{eq:IsotropicGreenFunction},
\[
\left|m_{M}\left(E+\i\eta\right)-m_{sc}\left(E+\i\eta\right)\right|=\left|\frac{1}{n}\sum_{i}\iprod{e_{i}}{G\left(E+\i\eta\right)e_{i}}-m_{sc}\left(E+\i\eta\right)\right|\le3n^{-\frac{1}{3}+3\eps_{LR}}
\]
which in combination with the previous inequality finishes the proof.
\end{proof}

Next, we will derive the delocalization properties of edge eigenvectors of $B$.
\begin{lem}
\label{lem: R1S_EigenVector}
Let $M \in \scr A_{(n,0)}$ be an $n \times n$ symmetric matrix, and let $B$ be as in \eqref{eq:B and M}.
Let $\mu_1 \ge \cdots \ge \mu_n$ be the eigenvalues of $B$, and let $u_1, \ldots, u_n$ be the corresponding unit eigenvectors.
If  $\beta$ is such that $\left|\mu_{\beta+1}-2\right|\le n^{-2/3}\varphi_n^{2\rho}$, then
\begin{align}
\left|\iprod{u_{\b+1}}l\right| &\le n^{-1+C\eps_{LR}}.\label{eq:B_EigenvectorDotl-1}
\intertext{and}
\norm{u_{\b+1}}_{\infty} &\le\frac{n^{1/6+6\eps_{LR}}}{\sqrt{n}}.\label{eq:B_l_infinity-1}
\end{align}
\end{lem}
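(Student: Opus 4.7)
The plan is to exploit the rank-one structure $B=M+\sqrt{\frac{p(n+2)}{1-p}}\,ll^\top$ to represent $u_{\b+1}$ explicitly. If $\mu_{\b+1}$ is not an eigenvalue of $M$ (the degenerate case $\iprod l{v_\b}=0$ being handled by the GOE-perturbation trick used in the proof of Lemma \ref{lem: R1SSticking}), the eigenvalue equation $(M-\mu_{\b+1})u_{\b+1}=-\sqrt{\frac{p(n+2)}{1-p}}\iprod l{u_{\b+1}}\,l$ yields $u_{\b+1}=-\sqrt{\frac{p(n+2)}{1-p}}\iprod l{u_{\b+1}}\,G_M(\mu_{\b+1})l$. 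Imposing $\norm{u_{\b+1}}_2=1$ then produces the two identities
\[
\sqrt{\tfrac{p(n+2)}{1-p}}\,|\iprod l{u_{\b+1}}|=\frac{1}{\norm{G_M(\mu_{\b+1})l}_2},\qquad
|u_{\b+1}(i)|=\frac{|\iprod{e_i}{G_M(\mu_{\b+1})l}|}{\norm{G_M(\mu_{\b+1})l}_2},
\]
so both bounds reduce to lower bounding the denominator and upper bounding the corresponding numerator.

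For \eqref{eq:B_EigenvectorDotl-1}, I lower bound $\norm{G_M(\mu_{\b+1})l}_2^2=\sum_\a \iprod l{v_\a}^2/(\nu_\a-\mu_{\b+1})^2$ by its $\a=\b$ term. Using the sticking estimate from Lemma \ref{lem: R1SSticking} in the form $\iprod l{v_\b}^2/(\nu_\b-\mu_{\b+1})\ge 1-o(1)$, I obtain $\iprod l{v_\b}^2/(\nu_\b-\mu_{\b+1})^2\ge (1-o(1))/\iprod l{v_\b}^2$, which combined with the isotropic delocalization bound $\iprod l{v_\b}^2\le n^{\eps_{LR}-1}$ gives $\norm{G_M(\mu_{\b+1})l}_2^2\ge (1-o(1))n^{1-\eps_{LR}}$. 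Plugging this into the first identity immediately yields $|\iprod l{u_{\b+1}}|\le n^{-1+C\eps_{LR}}$.

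For \eqref{eq:B_l_infinity-1}, I split $\iprod{e_i}{G_M(\mu_{\b+1})l}=A_\b+A_{\neq\b}$ into its $\a=\b$ and $\a\neq\b$ pieces. For the singular piece the trivial bound $\norm{G_M(\mu_{\b+1})l}_2\ge |\iprod l{v_\b}|/|\nu_\b-\mu_{\b+1}|$ cancels the apparent $\iprod l{v_\b}^{-1}$ blow-up exactly, leaving $|A_\b|/\norm{G_M(\mu_{\b+1})l}_2\le |\iprod{e_i}{v_\b}|\le \varphi_n^C/\sqrt n$ by $\ell_\infty$-delocalization of $v_\b$. For $A_{\neq\b}$ I introduce the imaginary shift $\eta=n^{-2/3-2\eps_{LR}}$ and write $A_{\neq\b}$ as $\Re\iprod{e_i}{G_M(\mu_{\b+1}+\i\eta)l}$ minus the $\a=\b$ contribution to the shifted Green function, minus the residual correction $\sum_{\a\neq\b}\eta^2\iprod{e_i}{v_\a}\iprod l{v_\a}/[(\nu_\a-\mu_{\b+1})((\nu_\a-\mu_{\b+1})^2+\eta^2)]$ coming from the replacement of $(\nu_\a-\mu_{\b+1})/((\nu_\a-\mu_{\b+1})^2+\eta^2)$ by $1/(\nu_\a-\mu_{\b+1})$. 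The isotropic local law \eqref{eq:IsotropicGreenFunction} bounds the shifted Green function by $|\iprod{e_i}l|\,|m_{sc}(\mu_{\b+1}+\i\eta)|+3n^{-1/3+3\eps_{LR}}=O(n^{-1/3+3\eps_{LR}})$, and the $\a=\b$ contribution to the shifted sum is negligible because $|\nu_\b-\mu_{\b+1}|\ll\eta$. This yields $|A_{\neq\b}|\le Cn^{-1/3+3\eps_{LR}}$, and combining with the lower bound on the denominator gives $|A_{\neq\b}|/\norm{G_M(\mu_{\b+1})l}_2\le Cn^{-5/6+7\eps_{LR}/2}$. Summing the two contributions produces $\norm{u_{\b+1}}_\infty\le\varphi_n^C/\sqrt n+Cn^{-5/6+C\eps_{LR}}$, well below the target $n^{1/6+6\eps_{LR}}/\sqrt n$.

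The main technical obstacle will be controlling the residual correction for $\a\neq\b$. Level repulsion \eqref{eq:LevelRepulsion} gives $|\nu_\a-\mu_{\b+1}|\gtrsim n^{-2/3-\eps_{LR}}$ for the $O(\varphi_n^{C\rho})$ indices $\a$ inside the window $|\nu_\a-2|\le n^{-2/3}\varphi_n^{3\rho}$, whose total contribution is easily estimated. Just outside that window level repulsion is no longer available, and I will instead invoke the rigidity estimate \eqref{eq:M_rigidity} combined with \eqref{eq:gammaalpha} to deduce $|\nu_\a-\mu_{\b+1}|\ge c(\a/n)^{2/3}$, and then sum the resulting $1/\a^2$-type tail starting from $\a\gtrsim\varphi_n^{9\rho/2}$, which is harmless once $\rho$ is taken sufficiently large.
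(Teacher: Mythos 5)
Your proposal is correct, and the skeleton coincides with the paper's: both arguments write the unnormalized eigenvector as $G_M(\mu_{\b+1})l$ (up to a scalar), lower bound its norm by the single $\a=\b$ term using the sticking estimate $\iprod l{v_\b}^2/(\nu_\b-\mu_{\b+1})\ge 1-o(1)$ together with $\iprod l{v_\b}^2\le n^{\eps_{LR}-1}$, deduce \eqref{eq:B_EigenvectorDotl-1} from $|\iprod l{u_{\b+1}}|=\bigl(\sqrt{p(n+2)/(1-p)}\,\norm{G_M(\mu_{\b+1})l}_2\bigr)^{-1}$ (the paper phrases this as $\iprod ul=-1/\sqrt{p(n+2)/(1-p)}$ divided by $\norm u_2$, which is the same identity), and handle the singular $\a=\b$ piece of the $\ell_\infty$ bound by the exact cancellation against the denominator, leaving $\norm{v_\b}_\infty$. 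Where you genuinely diverge is the non-singular piece $A_{\ne\b}$: the paper applies Cauchy--Schwarz with $\sum_{\a\ne\b}\iprod{e_i}{v_\a}^2\le1$ and reduces everything to $\sum_{\a\ne\b}\iprod l{v_\a}^2/(\nu_\a-\mu_{\b+1})^2\le n^{1/3+4\eps_{LR}}$ via $\tfrac1\eta\Im\iprod l{G_M(E+\i\eta)l}$, yielding $|A_{\ne\b}|\le n^{1/6+2\eps_{LR}}$ and hence the stated $n^{-1/3+4\eps_{LR}}$ after normalization; you instead compare $A_{\ne\b}$ directly to the off-diagonal entry $\Re\iprod{e_i}{G_M(\mu_{\b+1}+\i\eta)l}$ of the isotropic local law, which (once the residual correction is controlled) gives the stronger $|A_{\ne\b}|=O(n^{-1/3+C\eps_{LR}})$ and an $\ell_\infty$ bound of order $\varphi_n^C/\sqrt n$ rather than $n^{-1/3+C\eps_{LR}}$. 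Your residual term does close: writing $\tfrac{\eta^2}{|\nu_\a-E|((\nu_\a-E)^2+\eta^2)}\le\tfrac{\eta}{2(\nu_\a-E)^2}$ and applying Cauchy--Schwarz, both resulting sums are controlled by $\tfrac1\eta\Im\iprod{x}{G_M(E+\i\eta)x}=O(n^{1/3+C\eps_{LR}})$ for $x\in\{e_i,l\}$, so the residual is $O(\eta\, n^{1/3+C\eps_{LR}})=O(n^{-1/3+C\eps_{LR}})$ — of the same order as your main term, not smaller, but this is harmless. The only caveat is that your extra precision is not needed for the lemma as stated; the paper's cruder Cauchy--Schwarz avoids the residual bookkeeping entirely, while your version would be the one to use if a sharper $\ell_\infty$ delocalization of $u_{\b+1}$ were ever required downstream.
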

\begin{proof}
As pointed out in Lemma \ref{lem: R1SSticking}, $\mu_{\b+1}$ is an eigenvalue of $M$ if and only if $\iprod l{v_\b}=0$. In this case, we have $v_\b = u_{\b+1}$ so the statement follows trivially.

Now we assume $\mu_{\b+1}$ is not an eigenvalue of $M$, in which case, it satisfies \eqref{eq:R1S_NewEigenvalue}. Using this equality, one can directly check that
\[
 u=\sum_{\a\in\left[n\right]}\frac{\iprod l{v_{\a}}}{\nu_{\a}-\mu_{\b+1}} v_{\a}
\]
is an eigenvector  of $B$ corresponding to eigenvalue $\mu_{\b+1}$.

First, we provide a lower bound for $\norm u_{2}$. By Lemma
\ref{lem: R1SSticking}, we have $\frac{\iprod l{v_{\b+1}}^{2}}{\left|\nu_{\b}-\mu_{\b+1}\right|}\ge\frac{1}{2}$ and $ \left|\nu_{\b}-\mu_{\b+1}\right|\le n^{-1+C\eps_{LR}}$.
This allows to  bound the norm of $u$ by one of the coefficients:
\begin{equation} \label{eq: norm u}
\norm u_{2}^{2}\ge\frac{\iprod l{v_{\b+1}}^{2}}{\left|\nu_{\b}-\mu_{\b+1}\right|^{2}}
\ge\frac{1}{4}n^{1-C\eps_{LR}}.
\end{equation}

Recall that by (\ref{eq:R1S_NewEigenvalue}),
\[
 \iprod ul =  \sum_{\a\in\left[n\right]}\frac{\iprod l{v_{\a}}^{2}}{\nu_{\a}-\mu_{\b+1}}=-\frac{1}{\sqrt{\frac{p\left(n+2\right)}{1-p}}}.
\]
This yields
\begin{align*}
\left|\iprod{u_{\b+1}}l\right| &
=\frac{\left|\iprod ul \right|}{\norm u_{2}}\le n^{-1+C\eps_{LR}}
\end{align*}
if $n$ is sufficiently large.

Now we will estimate $\norm u_{\infty}=\max_{i\in\left[n\right]} \left| \sum_{\a\in\left[n\right]}\frac{\iprod l{v_{\a}}\iprod{e_{i}}{v_{\a}}}{\nu_{\a}-\mu_{\b+1}} \right|$.
We break the sum  isolating the main term:
\begin{align*}
\left|\iprod u{e_{i}}\right| & \le\left|\frac{\iprod l{v_{\b}}}{\nu_{\b}-\mu_{\b+1}}\right|\norm{v_{\b}}_{\infty}+\left|\sum_{\a\neq\b}\frac{\iprod l{v_{\a}}\iprod{e_{i}}{v_{\a}}}{\nu_{\a}-\mu_{\b+1}}\right|\\
 & \le\left|\frac{\iprod l{v_{\b}}}{\nu_{\b}-\mu_{\b+1}}\right|\norm{v_{\b}}_{\infty}+\sqrt{\sum_{\a\neq\b}\frac{\iprod l{v_{\a}}^{2}}{\left(\nu_{\a}-\mu_{\b+1}\right)^{2}}}\sqrt{\sum_{\a\neq\b}\iprod{e_{i}}{v_{\a}}^{2}}\\
 & \le\left|\frac{\iprod l{v_{\b}}}{\nu_{\b}-\mu_{\b+1}}\right|\norm{v_{\b}}_{\infty}+\sqrt{\sum_{\a\neq\b}\frac{\iprod l{v_{\a}}^{2}}{\left(\nu_{\a}-\mu_{\b+1}\right)^{2}}}.
\end{align*}

We will show below that
\begin{equation}
\sqrt{\sum_{\a\neq\b}\frac{\iprod l{v_{\a}}^{2}}{\left(\nu_{\a}-\mu_{\b+1}\right)^{2}}}\le n^{1/6+2\eps_{LR}}.\label{eq:R1S ImGreen}
\end{equation}
If this inequality holds, \eqref{eq: norm u} implies
\begin{align*}
\norm{u_{\b+1}}_{\infty} & =\frac{\norm u_{\infty}}{\norm u_{2}}\le\frac{\left|\frac{\iprod l{v_{\b}}}{\nu_{\b}-\mu_{\b+1}}\right|\norm{v_{\b}}_{\infty}}{\norm u_{2}}+\frac{n^{1/6+2\eps_{LR}}}{\norm u_{2}}\\
 & \le\frac{\left|\frac{\iprod l{v_{\b}}}{\nu_{\b}-\mu_{\b+1}}\right|\norm{v_{\b}}_{\infty}}{\left|\frac{\iprod l{v_{\b}}}{\nu_{\b}-\mu_{\b+1}}\right|}+4 n^{1/6-1/2+3\eps_{LR}}
 \le n^{-1/3 +4\eps_{LR}},
\end{align*}
where we used $\norm{v_{\b}}_{\infty}\le\frac{\varphi_n^{C}}{\sqrt{n}}$ from (\ref{eq:M_evectorl_infinity}) in the last inequality.
This completes the proof of the lemma modulus (\ref{eq:R1S ImGreen}).

In the rest of the proof, we focus on establishing (\ref{eq:R1S ImGreen})
by comparing $\sum_{\a\neq\b}\frac{\iprod l{v_{\a}}^{2}}{\left(\nu_{\a}-E\right)^{2}}$
with
\[
\frac{1}{\eta}\Im{\iprod l{G_{M}\left(E+\i\eta\right)l}}=\frac{1}{\eta}\Im{\sum_{\a\in\left[n\right]}\frac{\iprod l{v_{\a}}^{2}}{\nu_{\a}-E-\i\eta}}=\sum_{\a\in\left[n\right]}\frac{\iprod l{v_{\a}}^{2}}{\left(\nu_{\a}-E\right)^{2}+\eta^{2}}
\]
for any $E\in\left(\frac{\nu_{\b}+\nu_{\b+1}}{2},\,\nu_{\b}\right)$
which includes $\mu_{\b+1}$. The approach is basically the same as
in approximation of $\sum_{\a\neq\b}\frac{\iprod l{v_{\a}}^{2}}{v_{\a}-E}$
by $\Re{\iprod l{G\left(E+\i\eta\right)l}}$ in Lemma \ref{lem: R1SSticking}.
As in this lemma, we use $\left|\nu_{\a}-E\right|>\frac{1}{2}\eta n^{\eps_{LR}}$ for
$\a\neq\b$ to derive
\[
\frac{\eta}{\left(\nu_{\a}-E \right)^{2}}=\left(1+O\left(n^{-2\eps_{LR}}\right)\right)\frac{\eta}{\left(\nu_{\a}-E\right)^{2}+\eta^{2}}.
\]
Thus,
\begin{align*}
\sum_{\a\neq\b}\frac{\iprod l{v_{\a}}^{2}}{\left(\nu_{\a}-\mu_{\b+1}\right)^{2}} & =\left(1+O\left(n^{-2\eps_{LR}}\right)\right)\left[\frac{1}{\eta}\Im{\iprod l{G_{M}\left(E+\i\eta\right)l}}-\frac{\iprod l{v_{\b}}^{2}}{\left(\nu_{\b}-\mu_{\b+1}\right)^{2}+\eta^{2}}\right]\\
 & \le\left(1+O\left(n^{-2\eps_{LR}}\right)\right)\frac{1}{\eta}\Im{\iprod l{G_{M}\left(E+\i\eta\right)l}}.
\end{align*}
By (\ref{eq:IsotropicGreenFunction}) we have
\[
\Im{\iprod l{G\left(E+\i\eta\right)l}}=\Im{m_{sc}\left(E+\i\eta\right)}+O\left(n^{-1/3+3\eps_{LR}}\right).
\]

As $\left|E-2\right|<n^{-2/3}\varphi_n^{3\rho}$ and $\eta=n^{-2/3-2\eps_{LR}}$,
a direct estimate yields $\Im{m_{sc}\left(E+\i\eta\right)}=O\left(n^{-1/3}\varphi_n^{3\rho}\right)$.
Therefore,
\[
\sum_{\a\neq\b}\frac{\iprod l{v_{\a}}^{2}}{\left(\nu_{\a}-\mu_{\b+1}\right)^{2}}\le n^{1/3+4\eps_{LR}}
\]
proving (\ref{eq:R1S ImGreen}) and finishing the proof of the lemma.
\end{proof}

We have shown that if $M \in \scr A_{(n, 0)}$, then the matrix $B$ shares the spectral properties of $M$.
Let us summarize these properties.
\begin{definition}
\label{cond:Btypical}
Denote by $\scr T_{(n,k)}$ the set of
 $n \times n$ symmetric matrix $B$ with eigenvalues $\mu_1 \ge \cdots \ge \mu_n$ and unit eigenvectors $u_1, \ldots, u_n$  possessing the following properties.
\begin{itemize}
\item Eigenvalue properties:

\begin{itemize}
\item Local semicircular law:
\begin{equation}
\sup_{E:\,\left|E-2\right|\le\varphi_n^{2\rho}}\left|m_{B}\left(E+\i\eta\right)-m_{sc}\left(E+\i\eta\right)\right|\le n^{-1/3+C\eps_{LR}},\label{eq:B_localstatistic}
\end{equation}
where $m_{B}\left(z\right):=\frac{1}{n}\sum_{\a=1}^{n}\frac{1}{u_{\a}-z}$
is the Stieltjes transform of $B$ and $\eta=n^{-2/3-2\eps_{LR}}$.
\item Rigidity of the eigenvalues:
\begin{equation}
\forall\a=1,\dots,\,n-1\left|\mu_{\a+1}-\gamma_{\a}\right|\le
\varphi_n^{2C_{re}}\left[\min\left(\a,\,n-\a+1\right)\right]^{-1/3}n^{-2/3},
\label{eq:B_rigidity}
\end{equation}
\item Leading eigenvalue:
\begin{equation}
\mu_{1}\ge\frac{1}{2}\sqrt{\frac{p}{1-p}n}.\label{eq:B_LeadingEvalue}
\end{equation}
\end{itemize}
\item Edge eigenvector properties:
\begin{itemize}
\item Isotropic delocalization: \\
 for $\b$ such that $\left|\mu_{\b}-2\right|\le n^{-2/3}\varphi_n^{2\rho}$,
we have
\begin{equation}
\iprod{u_{\b}}l=O\left(n^{-1+c\eps_{LR}}\right).\label{eq:B_EvectorDotl}
\end{equation}
\item $\ell_{\infty}$ delocalization: \\
 for $\b$ such that $\left|\mu_{\b}-2\right|\le n^{-2/3}\varphi_n^{2\rho}$,
\begin{equation}
\norm{u_{\b}}_{\infty}\le\frac{n^{ 1/6+4\eps_{LR}}}{\sqrt{n}}.\label{eq:eq:B_EvectorLinifinity}
\end{equation}
\end{itemize}
\item Level repulsion at the edge:
$B \in \scr{LR} \left(n, \theta n^{-2/3-\eps_{LR}} -k \frac{\varphi^C_n}{n} \right)$, i.e., \\
for any two distinct eigenvalues $\nu,\,\nu'$ of $B$ in
$\left[2-n^{-2/3}\varphi_n^{3\rho},\,2+n^{-2/3}\varphi_n^{3\rho}\right]$,
we have
\begin{equation} \label{eq:B_LevelRepulsion}
\left|\nu-\nu'\right|>\theta n^{-2/3-\eps_{LR}} -k \frac{\varphi^C_n}{n}.
\end{equation}
\end{itemize}
\end{definition}
The matrices $B \in \scr T_{(n, 1)}$ will be called typical below.
In particular, we've shown that $M \in \scr A_{(n, 0)}$ implies $B \in \scr T_{(n, 1)}$.

Theorem  \ref{thm:typical submatrices} implies that probability close to $1$, the normalized adjacency matrix of a $G(n,p)$ graph is typical along with its principal submatrices. We will formulate it as a corollary.
\begin{cor} \label{cor:typical submatrices}
Let ${A}$ be the adjacency matrix of a $G(n+2,p)$ graph, and let
\[
  \t {A}= \frac{1}{\sqrt{p\left(1-p\right)(n+2)}}{ A}
  - \sqrt{\frac{p}{(1-p)(n+2)}}I_{n+2}
\]
and
\[
  H = \t {A} - \sqrt{\frac{p(n+2)}{1-p}}\one \one^\top.
\]
Let $\scr{T}$ be the set of all matrices $\t {A}$ such that $H \in \scr{A}$.
Then
 \[
  \PP( \t { A} \in \scr T) \ge 1-n^{-\delta}
 \]
 for some $\delta=\delta(p, \rho, \eps_{LR})>0$.
 Moreover, for any $i,j \in [n]$,
 \[
   \EE \left|
      \ind_{\scr T_{(n,1)} } (\t { A}^{(i,j)})
      - \ind_{\scr T} (\t { A})
    \right| \le n^{-1/3+ 2 \e_{LR}}.
 \]

\end{cor}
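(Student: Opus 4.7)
The plan is to reduce Corollary \ref{cor:typical submatrices} to Theorem \ref{thm:typical submatrices} using the deterministic rank-one identities $\t A_p = H + \sqrt{p(n+2)/(1-p)}\,\one\one^{\top}$ and $\t A_p^{(i,j)} = H^{(i,j)} + \sqrt{p(n+2)/(1-p)}\,ll^{\top}$. The first inequality is immediate: by the very definition of $\scr T$, the events $\{\t A_p \in \scr T\}$ and $\{H \in \scr A\}$ coincide pointwise, so $\PP(\t A_p \in \scr T) = \PP(H \in \scr A) \ge 1-n^{-\delta}$ with $\delta$ provided by Theorem \ref{thm:typical submatrices}.

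For the second inequality, I will insert $\ind_{\scr A_{(n,0)}}(H^{(i,j)})$ as an intermediate indicator and apply the triangle inequality. Using $\ind_{\scr T}(\t A_p) = \ind_{\scr A}(H)$ we get
\[
\EE\bigl|\ind_{\scr T_{(n,1)}}(\t A_p^{(i,j)})-\ind_{\scr T}(\t A_p)\bigr|
\le \EE\bigl|\ind_{\scr T_{(n,1)}}(\t A_p^{(i,j)})-\ind_{\scr A_{(n,0)}}(H^{(i,j)})\bigr|
+\EE\bigl|\ind_{\scr A_{(n,0)}}(H^{(i,j)})-\ind_{\scr A}(H)\bigr|.
\]
The second term is already bounded by $n^{-1/3+2\eps_{LR}}$ by the second part of Theorem \ref{thm:typical submatrices}. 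For the first term, the key input is the deterministic implication $M\in\scr A_{(n,0)}\Rightarrow B\in\scr T_{(n,1)}$ assembled in the preceding subsection from Lemmas \ref{lem: R1SSticking}, \ref{lem:Stieltjes}, and \ref{lem: R1S_EigenVector}. Applied to $M=H^{(i,j)}$ and $B=\t A_p^{(i,j)}$, this yields the pointwise inclusion $\{H^{(i,j)}\in\scr A_{(n,0)}\}\subseteq\{\t A_p^{(i,j)}\in\scr T_{(n,1)}\}$, so
\[
\EE\bigl|\ind_{\scr T_{(n,1)}}(\t A_p^{(i,j)})-\ind_{\scr A_{(n,0)}}(H^{(i,j)})\bigr|
= \PP(\t A_p^{(i,j)}\in\scr T_{(n,1)})-\PP(H^{(i,j)}\in\scr A_{(n,0)})
\le \PP(H^{(i,j)}\notin\scr A_{(n,0)}),
\]
and this last quantity is in turn bounded by combining both parts of Theorem \ref{thm:typical submatrices}: $\PP(H^{(i,j)}\notin\scr A_{(n,0)})\le \PP(H\notin\scr A)+\EE\bigl|\ind_{\scr A_{(n,0)}}(H^{(i,j)})-\ind_{\scr A}(H)\bigr|\le n^{-\delta}+n^{-1/3+2\eps_{LR}}$.

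The one point that requires care, and the reason $\scr T$ was defined through $\scr A$ rather than directly, is that a naive union bound over the pairs $(i,j)$ is unavailable for level repulsion. Everything therefore hinges on verifying that the implication $M\in\scr A_{(n,0)}\Rightarrow B\in\scr T_{(n,1)}$ is genuinely deterministic, with the parameter $k=1$ in $\scr T_{(n,1)}$ chosen to absorb the slack produced by the rank-one shift in each spectral property: rigidity and the isotropic local law (Lemma \ref{lem:Stieltjes}), $\ell_\infty$ and isotropic delocalization (Lemma \ref{lem: R1S_EigenVector}), and level repulsion (Lemma \ref{lem: R1SSticking}). Once this bookkeeping is in place, the corollary follows purely algebraically from Theorem \ref{thm:typical submatrices}; a harmless adjustment of $\eps_{LR}$ by a universal constant factor absorbs the additive $n^{-\delta}$ term into the stated bound $n^{-1/3+2\eps_{LR}}$.
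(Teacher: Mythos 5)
Your overall strategy coincides with the paper's: reduce everything to Theorem \ref{thm:typical submatrices} via the deterministic implication $M\in\scr A_{(n,0)}\Rightarrow B\in\scr T_{(n,1)}$. But there is a gap precisely where you delegate: that implication is \emph{not} fully assembled by Lemmas \ref{lem: R1SSticking}, \ref{lem:Stieltjes}, and \ref{lem: R1S_EigenVector}. Those lemmas give the Stieltjes-transform comparison, the isotropic and $\ell_\infty$ delocalization, and the level repulsion for $B$, but Definition \ref{cond:Btypical} contains two further conditions they do not touch: the eigenvalue rigidity \eqref{eq:B_rigidity} and the leading-eigenvalue lower bound \eqref{eq:B_LeadingEvalue}. (You attribute ``rigidity'' to Lemma \ref{lem:Stieltjes}, but that lemma only compares $m_B$ to $m_{sc}$.) Supplying these two items is the entire content of the paper's proof of the corollary: \eqref{eq:B_rigidity} follows from Cauchy interlacing of the eigenvalues of $B$ and $M$ combined with \eqref{eq:M_rigidity}, and \eqref{eq:B_LeadingEvalue} follows from the variational bound $\mu_1\ge\iprod l{Bl}\ge\sqrt{\tfrac{p(n+2)}{1-p}}\norm l_2^4-\lambda_1(M)\norm l_2^2$. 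Without these your pointwise inclusion $\{H^{(i,j)}\in\scr A_{(n,0)}\}\subseteq\{\t A_p^{(i,j)}\in\scr T_{(n,1)}\}$ is unproved.

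There is also a quantitative problem in your final step. After the triangle inequality you bound the first term by $\PP(H^{(i,j)}\notin\scr A_{(n,0)})\le n^{-\delta}+n^{-1/3+2\eps_{LR}}$, and you claim the $n^{-\delta}$ can be ``absorbed by a harmless adjustment of $\eps_{LR}$.'' It cannot: $\delta$ is dominated by the level-repulsion exponent $\delta_{LR}$ inherited from \cite{Knowles2013a}, an unspecified and possibly very small constant, so $n^{-\delta}$ may far exceed $n^{-1/3+2\eps_{LR}}$, and enlarging $\eps_{LR}$ only weakens the target bound without shrinking $n^{-\delta}$. The way to avoid this loss is not to insert the intermediate indicator and take absolute values of both differences separately, but to exploit the one-sided domination $\ind_{\scr T_{(n,1)}}(\t A_p^{(i,j)})\ge\ind_{\scr A_{(n,0)}}(H^{(i,j)})$ together with the identity $\ind_{\scr T}(\t A_p)=\ind_{\scr A}(H)$, so that the expectation splits into the same two one-sided events $\{H^{(i,j)}\in\scr A_{(n,0)},\,H\notin\scr A\}$ and $\{H^{(i,j)}\notin\scr A_{(n,0)},\,H\in\scr A\}$ (plus the transfer to $B$) that are already controlled, by Lemma \ref{lem:LevelRepulsionForMinor} and Lemma \ref{lem:change t} respectively, in the proof of Theorem \ref{thm:typical submatrices}. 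As written, your argument only yields $n^{-\delta}+Cn^{-1/3+2\eps_{LR}}$, which is weaker than the stated conclusion and would degrade the exponent in Theorem \ref{thm: edge} when propagated through Lemma \ref{lem:Edgeslambda}.
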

\begin{proof}
Except for \eqref{eq:B_rigidity} and \eqref{eq:B_LeadingEvalue}, these  conditions have been derived from the corresponding conditions on $H$ above. Condition \eqref{eq:B_rigidity} follows from the interlacing of the eigenvalues of $\tilde{A}_p$ and its principal submatrices.
 Finally, \eqref{eq:B_LeadingEvalue},
follows from (\ref{eq:M_rigidity}) for $\alpha=1$ since
\[
\mu_{1}\ge\iprod l{Bl}\ge\sqrt{\frac{p\left(n+2\right)}{1-p}}\norm l_{2}^{4}-\lambda_1(M) \norm l_{2}^{2}\ge\frac{1}{2}\sqrt{\frac{p}{1-p}n}.
\]
Both probability estimates follow now from Theorem \ref{thm:typical submatrices}.
\end{proof}

\subsection{Concentration of $w_{i}^{\top}G\left(E\right)w_{j}-d_{ij}+E$}

In this section, we fix an $n \times n$ matrix $B\in \scr T_{(n,1)}$. 
 Let $E$ be a constant such that $\left|E-2\right|\le n^{-2/3}\varphi_n^{2\rho}$.
Let $\left\{ \mu_{\a}\right\} _{\a=1}^{n}$ be eigenvalues of $B$ arranged
in the non-increasing order and let $\left\{ u_{\a}\right\} _{\a=1}^{n}$
be the corresponding unit eigenvectors.
Let $G\left(E\right)=\sum_{\a}\frac{1}{\mu_{\a}-E}u_{\a}u_{\a}^{\top}$ be the Green function of $B$.

Denote by $\a_{E}$ the integer such that
\[
\left|\mu_{\a_{E}}-E\right|=\min_{\a}\left|\mu_{\a}-E\right|.
\]
In this section we will prove the following lemma:
\begin{lem}
\label{lem:wGw-1}
Let $B\in \scr T_{(n,1)}$.
With probability greater than $1-\exp\left(-c\left(p\right)\varphi_n\right)$
($\varphi_n:=\left(\log n\right)^{\log\log n}$) in $w_{1}$
and $w_{2}$, we have
\begin{equation}
\forall i,\,j\in\left\{ 1,\,2\right\} \;w_{i}^{\top}G\left(E\right)w_{j}=-\left(1+O\left(n^{-2\eps_{LR}}\right)\right)\delta_{ij}+\frac{\iprod{w_{i}}{u_{\a_{E}}}\iprod{w_{j}}{u_{\a_{E}}}}{\mu_{\a_{E}}-E}+O\left(n^{-1/3+C\eps_{LR}}\right)\label{eq: wGw estimate-1-1}
\end{equation}
for all $E\in\left[2-n^{-2/3}\varphi_n^{2\rho},\,2+n^{-2/3}\varphi_n^{2\rho}\right]$
and $\a_{E}\in\left[n\right]$ is the integer so that $\left|\mu_{\a_{E}}-E\right|\le\min_{\a\in\left[n\right]}\left|\mu_{\a}-E\right|$.
\end{lem}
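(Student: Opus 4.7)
I would isolate the main term by writing $G(E)=(\mu_{\alpha_E}-E)^{-1}u_{\alpha_E}u_{\alpha_E}^\top+G^{(\alpha_E)}(E)$, reducing the claim to proving
\[
w_i^\top G^{(\alpha_E)}(E) w_j = -\delta_{ij}\bigl(1+O(n^{-2\eps_{LR}})\bigr)+O(n^{-1/3+C\eps_{LR}})
\]
uniformly in $E$, on an event of probability $1-e^{-c\varphi_n}$ in $w_1,w_2$. The crucial preparatory observation is that the level repulsion \eqref{eq:B_LevelRepulsion}, combined with the rigidity-based bound $\b \le \varphi_n^{C\rho}$ on the number of edge eigenvalues already used in Lemmas \ref{lem: R1SSticking} and \ref{lem: R1S_EigenVector}, yields $|\mu_\alpha - E| \ge \tfrac14 \eta\, n^{\eps_{LR}}$ for every $\alpha \neq \alpha_E$. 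This immediately gives $\|G^{(\alpha_E)}(E)\|_{\mathrm{op}} \le 4 n^{2/3+\eps_{LR}}$, while the identity $\sum_\alpha ((\mu_\alpha-E)^2+\eta^2)^{-1}= n\eta^{-1}\Im m_B(E+\i\eta)$ combined with \eqref{eq:B_localstatistic} and the edge estimate $\Im m_{sc}(E+\i\eta) = O(n^{-1/3}\varphi_n^{C\rho})$ gives $\|G^{(\alpha_E)}(E)\|_{HS}^2 \le n^{2/3+C\eps_{LR}}$.

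I would then decompose $w_i = \sqrt{p/(1-p)}\, l + \tilde w_i$, where $\tilde w_i \in \R^n$ has i.i.d.\ centered entries of variance $(n+2)^{-1}$ and $\psi_2$-norm $O(n^{-1/2})$ and is independent of $B$, and expand the bilinear form into its deterministic, linear, and quadratic parts in $(\tilde w_1,\tilde w_2)$. Hanson--Wright, fed by the operator and Hilbert--Schmidt bounds above, concentrates the quadratic part around its conditional mean $\delta_{ij}(n+2)^{-1}\Tr G^{(\alpha_E)}(E)$ with error $O(n^{-1/3+C\eps_{LR}})$ on an event of probability $1-e^{-c\varphi_n}$ (both the subgaussian and subexponential tails in Hanson--Wright give exponents comfortably exceeding $\varphi_n$ for this choice of $t$). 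This mean is then identified with $-\delta_{ij}\bigl(1+O(n^{-2\eps_{LR}})\bigr)$ by using $(\mu_\alpha-E)^{-1}=(1+O(n^{-2\eps_{LR}}))(\mu_\alpha-E)/((\mu_\alpha-E)^2+\eta^2)$ for $\alpha \ne \alpha_E$, exactly as in Lemma \ref{lem: R1SSticking}, invoking Lemma \ref{lem:Stieltjes}, and inserting $m_{sc}(E+\i\eta)=-1+O(n^{-1/3+C\eps_{LR}})$ at the edge. The deterministic and linear cross-terms are handled by the Sherman--Morrison identity $G_B=G_M-cG_Mll^\top G_M/(1+cl^\top G_M l)$ with $c=\sqrt{p(n+2)/(1-p)}$: since $c\gg 1$ and $l^\top G_M(E+\i\eta)l\approx -\|l\|^2$ by \eqref{eq:IsotropicGreenFunction}, one gets $l^\top G_B(E+\i\eta)l = O(n^{-1/2})$ and $\|G_B(E+\i\eta)l\|_2 = O(n^{-1/12+C\eps_{LR}})$, so the deterministic $l$-$l$ contribution and the linear Hoeffding-type contribution are both $O(n^{-1/3+C\eps_{LR}})$.

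To pass to uniformity in $E$, I would partition the edge window $[2-n^{-2/3}\varphi_n^{2\rho},2+n^{-2/3}\varphi_n^{2\rho}]$ into the maximal intervals on which $\alpha_E$ is constant; by rigidity, at most $\varphi_n^{C\rho}$ such intervals meet the window. On each interval, $E\mapsto w_i^\top G^{(\alpha_E)}(E)w_j$ is smooth and its derivative is bounded by $\|G^{(\alpha_E)}(E)\|_{\mathrm{op}}^2\|w_1\|_2\|w_2\|_2 \le n^{4/3+C\eps_{LR}}$, so a net of spacing $n^{-5}$ inside each interval suffices, and the polynomial union-bound cost is absorbed by the $e^{-c\varphi_n}$ tail. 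The main obstacle will be locking in the leading constant as exactly $-1$ with the advertised $O(n^{-2\eps_{LR}})$ multiplicative error; this requires performing the Stieltjes-transform replacement with the precise $n^{\eps_{LR}}$ separation from the excluded eigenvalue $\mu_{\alpha_E}$ while simultaneously using \eqref{eq:B_EvectorDotl} and the Sherman--Morrison identity to show that the rank-one shift along $l$ inherited from \eqref{eq:B and M} contributes only lower-order terms.
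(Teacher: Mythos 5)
Your plan follows essentially the same route as the paper's proof: the same splitting of $G(E)$ into the $\a_E$-spike plus $L(E)=\sum_{\a\neq\a_E}(\mu_\a-E)^{-1}u_\a u_\a^\top$, the same decomposition $w_i=\t w_i+\sqrt{p/(1-p)}\,l$, Hanson--Wright for the quadratic part with the conditional mean identified through $\Re m_B(E+\i\eta)=m_{sc}(E+\i\eta)+O(n^{-1/3+C\eps_{LR}})=-1+O(n^{-1/3+C\eps_{LR}})$, separately negligible linear and deterministic $l$-terms (the paper bounds these directly via isotropic delocalization and Cauchy--Schwarz rather than Sherman--Morrison, but that is cosmetic), and a polynomial net in $E$ absorbed by the $\exp(-c(p)\varphi_n)$ tail. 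One arithmetic slip: the Hilbert--Schmidt bound should read $\norm{L(E)}_{HS}^2=O(n^{4/3+C\eps_{LR}})$, since it equals $(1+o(1))\,n\eta^{-1}\Im{m_B(E+\i\eta)}$ with $\eta^{-1}=n^{2/3+2\eps_{LR}}$ and $\Im{m_B}=O(n^{-1/3+C\eps_{LR}})$, not $n^{2/3+C\eps_{LR}}$; this does not harm the argument, as Hanson--Wright with $t=n^{2/3+C'\eps_{LR}}$ still gives exponents polynomial in $n$ and hence far exceeding $\varphi_n$.
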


By level repulsion (\ref{eq:B_LevelRepulsion}), 
we have
\begin{equation}
\left|\mu_{\a}-E\right|>\frac{1}{8}n^{-2/3-\eps_{LR}}\label{eq:EdgeElocation}
\end{equation}
 for $\a\neq\a_{E}$.
 Decompose $G$ to separate the main term:
\[
G\left(E\right)=\sum_{\a \in [n]}\frac{1}{\mu_{\a}-E}u_{\a}u_{\a}^{\top}=\sum_{\a\neq\a_{E}}\frac{1}{\mu_{\a}-E}u_{\a}u_{\a}^{\top}+\frac{1}{\mu_{\a_{E}}-E}u_{\a_{E}}u_{\a_{E}}^{\top}:=L\left(E\right)+\frac{1}{\mu_{\a_{E}}-E}u_{\a_{E}}u_{\a_{E}}^{\top}.
\]
For $i=1,\,2$, we express $w_{i}$ as
\[
w_{i}=\t w_{i}+\sqrt{\frac{p}{1-p}}l,
\]
where $\t w_{i}$ has i.i.d components with the same  distribution
as in (\ref{eq:H_EntryDistribution-1}). In particular, one can treat
$\sqrt{n+2}\t w_{i}$ as an isotropic subgaussian vector whose entries
have $\psi_{2}$-norms bounded by $K\left(p\right)$.

Our goal is to show that $w_{i}^{\top}L\left(E\right)w_{j}$ is concentrated about $-\delta_{i,j}$.
To achieve that, we represent it as
\begin{equation}  \label{eq:decomposition w}
w_{i}^{\top}L\left(E\right)w_{j}=\t w_{i}^{\top}L\left(E\right)\t w_{j}+\sqrt{\frac{p}{1-p}}l^{\top}L\left(E\right)\t w_{j}+\sqrt{\frac{p}{1-p}}\t w_{i}^{\top}L\left(E\right)l+\frac{p}{1-p}l^{\top}L\left(E\right)l
\end{equation}
and estimate each summand separately. We start with the bilinear term.
\begin{lem}
\label{lem:wGw}
Fix an $n \times n$ matrix $B \in \scr{T}_{(n,1)}$. 
With probability greater than $1-\exp\left(-c\left(p\right)\varphi_n\right)$
($\varphi_n:=\left(\log n\right)^{\log\log n}$) in $w_{1}$
and $w_{2}$, we have
\begin{equation}
\t w_{i}^{\top}L\left(E\right)\t w_{j}=-\left(1+O\left(n^{-2\eps_{LR}}\right)\right)\delta_{ij}+O\left(n^{-1/3+C\eps_{LR}}\right)\label{eq: wLw estimate}
\end{equation}
for $E\in\left[2-n^{-2/3}\varphi_n^{2\rho},\,2+n^{-2/3}\varphi_n^{2\rho}\right]$.
Here, $O\left(n^{-2\eps_{LR}}\right)$ and $O\left(n^{-1/3+C\eps_{LR}}\right)$ mean some deterministic functions of $n$ with the prescribed asymptotic, and $c\left(p\right)$ is a constant that depends only on $p$.
\end{lem}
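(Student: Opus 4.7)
\emph{Strategy.} Put $v_i := \sqrt{n+2}\,\t w_i$, so that the $v_i$ are independent isotropic subgaussian vectors with $\psi_2$-norm bounded by $K=K(p)$, and
\[
\t w_i^\top L(E)\t w_j = \frac{1}{n+2}\, v_i^\top L(E) v_j.
\]
I plan to identify $\E[v_i^\top L(E) v_j]$ as the deterministic leading term, use the Hanson--Wright inequality to control fluctuations at a single $E$, and then upgrade to uniform control on $\mathcal I := [2-n^{-2/3}\varphi_n^{2\rho},\,2+n^{-2/3}\varphi_n^{2\rho}]$ via an $\eps$-net. For the off-diagonal case $i\neq j$, I will apply Hanson--Wright to the joint vector $(v_1,v_2)\in \R^{2n}$ against the symmetric block matrix with $L(E)/2$ in the off-diagonal blocks, whose operator and Hilbert--Schmidt norms are controlled by those of $L(E)$.

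\emph{Deterministic part and concentration.} Isotropy gives $\E[v_i^\top L(E) v_j]=\delta_{ij}\Tr L(E)$. Level repulsion \eqref{eq:B_LevelRepulsion} gives $|\mu_\a-E|\ge \tfrac18 n^{-2/3-\eps_{LR}} \gg \eta := n^{-2/3-2\eps_{LR}}$ for every $\a\neq \a_E$, hence
\[
\frac{1}{\mu_\a-E} = \bigl(1+O(n^{-2\eps_{LR}})\bigr)\,\frac{\mu_\a-E}{(\mu_\a-E)^2+\eta^2}.
\]
Summing over $\a\neq \a_E$, restoring the missing $\a_E$-term (which is bounded by $1/(2\eta)=O(n^{2/3+2\eps_{LR}})$), and applying the isotropic local semicircular law \eqref{eq:B_localstatistic} together with the expansion $\Re m_{sc}(E+\i\eta)=-1+O(n^{-1/3}\varphi_n^{\rho})$ valid in our edge window yields $\Tr L(E)/(n+2) = -(1+O(n^{-2\eps_{LR}})) + O(n^{-1/3+C\eps_{LR}})$. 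For the fluctuations, Hanson--Wright produces
\[
\P\bigl(|v_i^\top L(E) v_j-\E| > t\bigr) \le 2\exp\!\Bigl(-c\min\!\Bigl(\tfrac{t^2}{K^4\norm{L(E)}_{HS}^2},\,\tfrac{t}{K^2\norm{L(E)}}\Bigr)\Bigr).
\]
Level repulsion gives $\norm{L(E)}\le 8n^{2/3+\eps_{LR}}$, while the identity $\sum_\a ((\mu_\a-E)^2+\eta^2)^{-1}=n\eta^{-1}\Im m_B(E+\i\eta)$ combined with $\Im m_{sc}(E+\i\eta)=O(n^{-1/3}\varphi_n^\rho)$ and \eqref{eq:B_localstatistic} gives $\norm{L(E)}_{HS}^2\le Cn^{4/3+C\eps_{LR}}$. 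Choosing $t=\norm{L(E)}_{HS}\sqrt{\varphi_n}$ yields a fluctuation of $O(n^{2/3+C\eps_{LR}})$ on the $v$-scale, i.e.\ $O(n^{-1/3+C\eps_{LR}})$ on the $\t w$-scale, with probability at least $1-2\exp(-c\varphi_n)$.

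\emph{Uniformity in $E$.} I will partition $\mathcal I$ into subintervals on which $\a_E$ is constant; by rigidity \eqref{eq:B_rigidity} combined with \eqref{eq:gammaalpha} there are at most $\varphi_n^{C\rho}$ eigenvalues of $B$ in $\mathcal I$, hence at most that many pieces. On each piece $\norm{\partial_E L(E)} \le \norm{L(E)}^2 \le Cn^{4/3+C\eps_{LR}}$, so an $\eps$-net of mesh $n^{-5/3-C\eps_{LR}}$ (at most $n$ points per piece) controls the variation up to $O(n^{-1/3})$ on the $\t w$-scale; at each midpoint, $L(E)$ has a jump of operator norm $O(n^{2/3+\eps_{LR}})$ by level repulsion, contributing only $O(n^{-4/3+\eps_{LR}})$ after normalization. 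A union bound over at most $n\varphi_n^{C\rho} = n^{1+o(1)}$ points is absorbed by $\exp(-c\varphi_n)$, since $\varphi_n$ dominates $\log n$. The main obstacle is uniformity in $E$, which requires tracking the piecewise-smooth structure of $L(E)$ together with a careful balancing of the net mesh against the Hanson--Wright tail; beyond this, one must verify that the multiplicative $O(n^{-2\eps_{LR}})$ error on the $\delta_{ij}$ coefficient survives both the additive Hanson--Wright error and the $\sqrt{\varphi_n}=n^{o(1)}$ factor in the tail bound, which is routine once $C=C(\eps_{LR})$ is chosen sufficiently large.
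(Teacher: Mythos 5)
Your proposal follows the same skeleton as the paper's proof: identify $\E[\t w_i^\top L(E)\t w_j]=\delta_{ij}\Tr L(E)/(n+2)$ and evaluate the trace against $\Re{m_{sc}}$ via the local law \eqref{eq:B_localstatistic}, apply Hanson--Wright at fixed $E$ with the same bounds $\norm{L(E)}\lesssim n^{2/3+\eps_{LR}}$ and $\norm{L(E)}_{HS}^2\lesssim n^{4/3+C\eps_{LR}}$, then pass to all $E$ by a net. Two variations: for $i\neq j$ you apply Hanson--Wright to the concatenated vector with an off-diagonal block matrix, where the paper polarizes via $\t w_1+\t w_2$ (both fine); and for uniformity you use a per-piece Lipschitz bound $\norm{\partial_E L(E)}=\norm{L(E)}^2$, where the paper uses one global net of mesh $n^{-100}$ and, when the nearest net point $E'$ has $\a_{E'}\neq\a_E$, controls the two isolated rank-one terms using the high-probability bound $\left|\iprod{\t w_i}{u_\a}\right|\le\varphi_n/\sqrt{n+2}$ for all $\a$ (Hoeffding plus a union bound). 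Your one incorrect claim is the treatment of the midpoint jump: the jump in $L$ has operator norm $O(n^{2/3+\eps_{LR}})$ and $\norm{\t w_i}_2=O(1)$, so the crude bound on $|\t w_i^\top(\Delta L)\t w_j|$ is $O(n^{2/3+\eps_{LR}})$, not $O(n^{-4/3+\eps_{LR}})$; making the jump small genuinely requires the pointwise bound on $\iprod{\t w_i}{u_\a}$ for the two eigenvectors involved, exactly as in the paper. Fortunately this step is dispensable in your setup: if each piece carries its own net points, no comparison ever crosses a piece boundary and the Lipschitz bound alone suffices, so the argument closes. (Minor: when extracting the factor $1+O(n^{-2\eps_{LR}})$ from the sum defining $\Tr L(E)$, either separate the $O(\varphi_n^{C\rho})$ terms with $\mu_\a>E$ as the paper does, or note that $\frac1n\sum_{\a\neq\a_E}|\mu_\a-E|^{-1}=O(1)$ by rigidity, so the mixed signs cause no loss.)
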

\begin{proof}[Proof of Lemma \ref{lem:wGw}]
Fix $E\in\left[2-n^{-2/3}\varphi_n^{2\rho},\,2+n^{-2/3}\varphi_n^{2\rho}\right]$.
We will first estimate the expectation of $\t w_{1}^{\top}L\left(E\right)\t w_{1}$
and then use the Hanson-Wright inequality to
derive the concentration.

First, we will estimate the expectation.

Since $\EE_{\t w_{1}, \t w_{2}}\t w_{1}^{\top}L\left(E\right)\t w_{2}=0$ by independence of $\t w_{1}$ and $\t w_{2}$, and since
$\EE_{\t w_{2}}\t w_{2}^{\top}L\left(E\right)\t w_{2}=\EE_{\t w_{1}}\t w_{1}^{\top}L\left(E\right)\t w_{1}$,
we have to evaluate only the last quantity.
Using the fact that $\t w_{1}$ has independent entries with mean $0$
and variance $\frac{1}{n+2}$, we obtain
\[
\EE_{\t w_{1}}\t w_{1}^{\top}L\left(E\right)\t w_{1}=\EE_{\t{w_{1}}}\sum_{\a\neq\a_{E}}\frac{1}{\mu_{\a}-E}\iprod{u_{\a}}{\t w_{1}}^{2}=\sum_{\a\neq\a_{E}}\frac{1}{\mu_{\a}-E}\frac{\sum_{i\in\left[n\right]}u_{\a}^{2}\left(i\right)}{n+2}=\frac{1}{n+2}\sum_{\a\neq\a_{E}}\frac{1}{\mu_{\a}-E}.
\]
Recall that for all $\a\in\left[n-1\right]$, we have rigidity of
eigenvalues (\ref{eq:B_rigidity}):
\[
\left|\mu_{\a+1}-\gamma_{\a}\right|\le2\varphi_n^{A_{sls}}\left[\min\left(\a,\,n-\a+1\right)\right]^{-1/3}n^{-2/3}.
\]
Hence, $\left|\left\{ \a\,:\,\mu_{\a}>E,\,\&\,\a\neq\a_{E}\right\} \right|\le\varphi_n^{C\rho}$,
and
\begin{equation}  \label{eq:GreaterthanE}
\sum_{\alpha:\,\mu_{\a}>E\,\&\a\neq\a_{E}}\frac{1}{\mu_{\a}-E}
 \le\left|\left\{ \a\,:\,\mu_{\a}>E,\,\&\,\a\neq\a_{E}\right\} \right| \cdot  \frac{1}{4}n^{2/3+\eps_{LR}}
  \le n^{2/3+2\eps_{LR}}
\end{equation}
We write
\[
\frac{1}{\mu_{\a}-E}  =\left(1+\frac{\eta^{2}}{\left(\mu_{\a}-E\right)^{2}}\right)\frac{\mu_{\a}-E}{\left(\mu_{\a}-E\right)^{2}+\eta^{2}},
\]
and set $\eta:=n^{-2/3-2\eps_{LR}}$.
With this choice of $\eta$,  we have $\left|\mu_{\a}-E\right|>\frac{1}{4}n^{\eps_{LR}}\eta$ from
(\ref{eq:EdgeElocation}), and so $\left(1+\frac{\eta^{2}}{\left(\mu_{\a}-E\right)^{2}}\right)=1+O\left(n^{-2\eps_{LR}}\right)$.
Therefore,
\begin{align}
 \frac{1}{n}\sum_{\a:\,\mu_{\a}<E\,\&\,\a\neq\a_{E}}\frac{1}{\mu_{\a}-E}
= & \left(1+O\left(n^{-2\eps_{LR}}\right)\right)\sum_{\a:\,\lambda_{\a}<E\,\&\,\a\neq\a_{E}}\frac{1}{n}\frac{\mu_{\a}-E}{\left(\mu_{\a}-E\right)^{2}+\eta^{2}} \notag \\
= & \left(1+O\left(n^{-2\eps_{LR}}\right)\right) \left[\Re{m_{B}\left(E+i\eta\right)}-\frac{1}{n}\sum_{\alpha:\,\mu_{\a}>E\,{\rm or}\,\a=\a_{E}}\frac{\mu_{\a}-E}{\left(\mu_{\a}-E\right)^{2}+\eta^{2}} \right]\notag \\
= & \left(1+O\left(n^{-2\eps_{LR}}\right)\right)\Re{m_{B}\left(E+i\eta\right)}+O\left(n^{-1/3+3\eps_{LR}}\right),\label{eq:ApproximatedStieljes}
\end{align}
where the last equality relies on \eqref{eq:GreaterthanE}.
Combining (\ref{eq:GreaterthanE}) and (\ref{eq:ApproximatedStieljes}),
we get
\[
\frac{1}{n}\sum_{\a\neq\a_{E}}\frac{1}{\mu_{\a}-E}=\left(1+O\left(n^{-2\eps_{LR}}\right)\right)\Re{m_{B}\left(E+i\eta\right)}+O\left(n^{-1/3+3\eps_{LR}}\right).
\]
We have $\Re{m_{B}\left(E+i\eta\right)}=\Re{m_{sc}\left(E+\i\eta\right)}+O\left(n^{-1/3+C\eps_{LR}}\right)=-1+O\left(n^{-1/3+C\eps_{LR}}\right)$
by (\ref{eq:B_localstatistic}).
Thus, if $\eps_{LR}$ is small enough, then
\[
\frac{1}{n}\sum_{\a\neq\a_{E}}\frac{1}{\mu_{\a}-E}=-1+O\left(n^{-C\eps_{LR}}\right).
\]
We conclude that
\[
\EE_{\t w_{1}}\t w_{1}^{\top}L\left(E\right)\t w_{1}=-1+O\left(n^{-C\eps_{LR}}\right).
\]

Now we are ready to derive concentration via
Hanson-Wright inequality \cite{Rudelson2013} by the second author and Vershynin.
\begin{thm}
    \cite{Rudelson2013}\label{thm:Hanson-Wright} Let $X=\left(X_{1},\dots,X_{n}\right)\in\R^{n}$
    be a random vector with independent components $X_{i}$ with satisfy
    $\EE X_{i}=0$, $ $and $\norm{X_{i}}_{\psi_{2}}\le K$. Let $A$
    be an $n\times n$ matrix. Then, for every $t\ge0$,

    \begin{align}
    \PP\left(\left|X^{\top}AX-\EE X^{\top}AX\right|>t\right) & \le2\exp\left(-c\min\left(\frac{t^{2}}{K^{4}\norm A_{HS}^{2}},\,\frac{t}{K^{2}\norm A}\right)\right)\label{eq:Hanson-Wright}
    \end{align}
\end{thm}

To this end, we need to estimate the operator norm
and Hilbert Schmidt norm of $L\left(E\right)$. The operator norm
can be estimated directly:
\[
\norm{L\left(E\right)}\le\max_{\a\neq\a_{E}}\frac{1}{\mu_{\a}-E}\le\frac{1}{4}n^{2/3+\eps_{LR}}.
\]
For the Hilbert Schmidt norm,
 a derivation similar to (\ref{eq:ApproximatedStieljes}) yields
\begin{align}
 \norm{L\left(E\right)}_{HS}^{2}=
  & \sum_{\a\neq\a_{E}}\frac{1}{\left(\mu_{\a}-E\right)^{2}}\nonumber \\
= & \left(1+o\left(1\right)\right)\sum_{\a\neq\a_{E}}\frac{1}{\left(\mu_{\a}-E\right)^{2}+\eta^{2}}=\left(1+o\left(1\right)\right)\frac{n}{\eta}\sum_{\a\neq\a_{E}}\frac{\eta}{n}\frac{1}{\left(\mu_{\a}-E\right)^{2}+\eta^{2}}\nonumber \\
= & \left(1+o\left(1\right)\right)\frac{n}{\eta} \left[\Im{m_{B}\left(E+\i\eta\right)}-\frac{\eta}{n}\frac{1}{\left(\mu_{\a_{E}}-E\right)^{2}+\eta^{2}} \right]\nonumber \\
= & \left(1+o\left(1\right)\right)\frac{n}{\eta}\left(\Im{m_{sc}\left(E+\i\eta\right)}+O\left(n^{-1/3+C\eps_{LR}}\right)-\frac{\eta}{n}\frac{1}{\left(\mu_{\a_{E}}-E\right)^{2}+\eta^{2}}\right), \label{eq:CompareToImStieltjesTransform}
\end{align}
where we used $\left|m_{sc}\left(E+\i\eta\right)-m\left(E+\i\eta\right)\right|\le O\left(n^{-1/3+C\eps_{LR}}\right)$
from (\ref{eq:B_localstatistic}). A direct computation shows
that $\Im\left(m_{sc}\left(E+\i\eta\right)\right)=O\left(n^{-1/3+C\eps_{LR}}\right)$
and $\frac{\eta}{n}\frac{1}{\left(\mu_{\a_{E}}-E\right)^{2}+\eta^{2}}=O\left(\frac{1}{n\eta}\right)=O\left(n^{-1/3+2\eps_{LR}}\right)$.
Hence,
\[
\norm{L\left(E\right)}_{HS}^{2}=\sum_{\a\neq\a_{E}}\frac{1}{\left(\mu_{\a}-E\right)^{2}}=\left(1+o\left(1\right)\right)\frac{n}{\eta}O\left(n^{-1/3+C\eps_{LR}}\right)=O\left(n^{4/3+C\eps_{LR}}\right).
\]

One can easily show that $\norm{\sqrt{n+2}\t w_{1}\left(i\right)}_{\psi_{2}}\le C\sqrt{\frac{1-p}{p}}$.
An application Hanson-Wright inequality with $X=\sqrt{n+2}\t w_{1}$
and $A=L\left(E\right)$ yields
\[
\PP\left(\left|\t w_{1}^{\top}L\left(E\right)\t w_{1}-\EE_{w_{1}}\t w_{1}^{\top}L\left(E\right)\t w_{1}\right|\ge\frac{t}{n+2}\right)\le2\exp\left(-c\left(p\right)\frac{t}{n^{2/3+C\eps_{LR}}}\right)
\]
for any $t>1$.
Taking $t=n^{2/3+2C\eps_{LR}}$, we get
\[
 \t w_{1}^{\top}L\left(E\right)\t w_{1}=
 \underbrace{-1+O\left(n^{-2\eps_{LR}}\right)}
 _{\EE\t w_{1}^{\top}L\left(E\right)\t w_{1}}+O\left(n^{-1/3+ 2C\eps_{LR}}\right)
\]
with probability at least $1-\exp\left(-c\left(p\right)\varphi_n\right)$.
(Recall that $\varphi_n =\log n^{\log\log n}$. )

Notice that, the same estimate works for $\t w_{2}$ and $\t w_{1}+\t w_{2}$
as well: with probability at least $1-\exp\left(-c\left(p\right)\varphi_n\right)$,
\begin{align*}
\left(\t w_{1}+\t w_{2}\right)^{\top}L\left(E\right)\left(\t w_{1}+\t w_{2}\right) &
=\underbrace{\EE\t w_{1}^{\top}L\left(E\right)\t w_{1}
+\EE\t w_{2}^{\top}L\left(E\right)\t w_{2}}
_{\EE\left(\t w_{1}+\t w_{2}\right)^{\top}L\left(E\right)
\left(\t w_{1}+\t w_{2}\right)}+O\left(n^{-1/3+2C\eps_{LR}}\right).
\end{align*}
Therefore,
by the linearity,  adjusting the constant
$C$ appropriately
we have
\[
\t w_{1}^{\top}L\left(E\right)\t w_{2}=O\left(n^{-1/3+C\eps_{LR}}\right),
\]
with probability at least
$1-\exp\left(-c\left(p\right)\varphi_n\right)$, thus obtaining (\ref{eq: wLw estimate}) for a fixed $E$.

To extend this to all $E\in\left[2-n^{-2/3}\varphi_n^{2\rho},\,2+n^{-2/3}\varphi_n^{2\rho}\right]$, we will use a net argument.
Let $\scr N$ be a $\kappa$-net in $\left[2-n^{-2/3}\varphi_n^{2\rho},\,2+n^{-2/3}\varphi_n^{2\rho}\right]$
with $\kappa=n^{-100}$ and assume that (\ref{eq: wLw estimate}) holds
for all $E\in\scr N$. Since $\left|\scr N\right|$ is polynomial
in $n$, this event has probability bounded by $\exp\left(-c\left(p\right)\varphi_n\right)$.

Recall that the coordinates of $\sqrt{n+2}\t w_{i}$ are independent, centered, subgaussian random variables with $\norm{\sqrt{n+2}\t w_{1}\left(k\right)}_{\psi_{2}}\le C\sqrt{\frac{1-p}{p}}$.
By Hoeffding's inequality,
\[
\sqrt{n+2}\iprod{\t w_{i}}{u_{\a}}=\sum_{k=1}^{n}\sqrt{n+2}\t w_{i}\left(k\right)u_{\a}\left(k\right)
\]
is also subgaussian since
$\norm{u_{\a}}_{2}=1$.
Similarly, $(n+2)\norm{\t w_i}_2^2$, being a sum of subexponential random variables, satisfies Bernstein's inequality.
Together with  a union bound, these two facts imply
\[
\PP\left(\exists\a\in\left[n\right],i\in\left\{ 1,\,2\right\} \,\left|\iprod{\t w_{i}}{u_{\a}}\right|\ge\frac{\varphi_n}{\sqrt{n+2}}\,\&\,\norm{w_{i}}_{2}\le\varphi_n\right)\le\exp\left(-c\left(p\right)n\right).
\]
Assume that these two events occur in addition to the assumption that (\ref{eq: wLw estimate}) holds
for all $E\in\scr N$ which we already made. Let $E\in\left[2-n^{-2/3}\varphi_n^{2\rho},\,2+n^{-2/3}\varphi_n^{2\rho}\right]$, and
choose $E'\in\scr N$ such
that $\left|E-E'\right|<\kappa$. Suppose that $\a_{E}\neq\a_{E'}$, then
\begin{align*}
 & \left|\t w_{i}^{\top}
 L\left(E\right)\t w_{j}-\t w_{i}^{\top}L\left(E'\right)\t w_{j}\right|\\
\le & \norm{\t w_{i}}_{2}\norm{\t w_{j}}_{2}
\sum_{\a\neq\a_{E},\,\a_{E'}}
\left|\frac{1}{\mu_{\a}-E}-\frac{1}{\mu_{\a}-E'}\right|+
\left|\frac{\iprod{\t w_{i}}{u_{\a_{E'}}}\iprod{\t w_{j}}{u_{\a_{E'}}}}{\mu_{\a_{E'}}-E}\right|+\left|\frac{\iprod{\t w_{ i}}{u_{\a_{E}}}\iprod{\t w_{j}}{u_{\a_{E}}}}{\mu_{\a_{E}}-E'}\right|\\
\le & \norm{\t w_{i}}_{2}\norm{\t w_{j}}_{2}
\sum_{\a\neq\a_{E},\,\a_{E'}}\frac{4\kappa}{\eta^{2}}+
\left|\frac{\iprod{\t w_{i}}{u_{\a_{E'}}}\iprod{\t w_{ j}}{u_{\a_{E'}}}}{\mu_{\a_{E'}}-E}\right|+\left|\frac{\iprod{\t w_{i}}{u_{\a_{E}}}\iprod{\t w_{j}}{u_{\a_{E}}}}{\mu_{\a_{E}}-E'}\right|\\
\le & \norm{\t w_{i}}_{2}\norm{\t w_{j}}_{2}\frac{4n}{\eta^{2}}\kappa+\left|\frac{\iprod{\t w_{i}}{u_{\a_{E'}}}\iprod{\t w_{j}}{u_{\a_{E'}}}}{\mu_{\a_{E'}}-E}\right|+\left|\frac{\iprod{\t w_{i}}{u_{\a_{E}}}\iprod{\t w_{j}}{u_{\a_{E}}}}{\mu_{\a_{E}}-E'}\right|
\end{align*}
Since $\a_{E}\neq\a_{E'}$, we have $\min\left\{ \left|\mu_{\a_{E'}}-E\right|,\,\left|\mu_{\a_{E}}-E'\right|\right\}
\ge\frac{1}{8}n^{-2/3-\eps_{LR}}$.
Together with $\left|\iprod{\t w_{i}}{u_{\a}}\right|\le\frac{\varphi_n}{\sqrt{n+2}}$,
this yields
\[
\left|\frac{\iprod{\t w_{i}}{u_{\a_{E'}}}\iprod{\t w_{j}}{u_{\a_{E'}}}}{\mu_{\a_{E'}}-E}\right|+\left|\frac{\iprod{\t w_{i}}{u_{\a_{E}}}\iprod{\t w_{j}}{u_{\a_{E}}}}{\mu_{\a_{E}}-E'}\right|=O\left(n^{-1/3+2\eps_{LR}}\right).
\]

Thus,
\begin{align*}
\left|\t w_{i}^{\top}L\left(E\right)\t w_{j}-\t w_{i}^{\top}L\left(E'\right)\t w_{j}\right| & \le\norm{\t w_{i}}_{2}\norm{\t w_{j}}_{2}\frac{4n}{\eta^{2}}\kappa+O\left(n^{-1/3+2\eps_{LR}}\right)
\end{align*}

As $\kappa=n^{-100},$ the difference is bounded by $O\left(n^{-1/3+2\eps_{LR}}\right)$.
The same bound holds for the case $\a_{E}=\a_{E'}$, and the proof is simpler, since the last two terms do not appear.
Therefore,  (\ref{eq: wLw estimate}) holds for $E$ as well
if constant $C$ is appropriately adjusted.
\end{proof}

Next,
we bound the linear and constant terms in \eqref{eq:decomposition w}.
\begin{lem}
\label{prop:Negligiblel}
Fix an $n \times n$ matrix $B \in \scr T_{(n,1)}$. 
 With probability greater than $1-\exp\left(c\left(p\right)\varphi_n^{C}n\right)$,
for any $E$ such that $\left|E-2\right|\le n^{-2/3}\varphi_n^{2\rho}$,
\begin{align}
l^{\top}L\left(E\right)l=O\left(n^{-1/3+C\eps_{LR}}\right),\,{\rm and}\,\t w_{1}^{\top}L\left(E\right)l=O\left(n^{-1/3+C\eps_{LR}}\right).\label{eq: wLw neg}
\end{align}
Here, $c\left(p\right)$ is a constant that depends only on $p$.
\end{lem}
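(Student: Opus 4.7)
The plan is to reduce both estimates to bounds on the isotropic Green function $\iprod{l}{G_{B}(E+\i\eta)l}$ at the spectral parameter $\eta=n^{-2/3-2\eps_{LR}}$, following the template used in the proof of Lemma \ref{lem:wGw}. By the level repulsion \eqref{eq:B_LevelRepulsion}, for $\a\neq\a_{E}$ one has $\left|\mu_{\a}-E\right|\ge\frac{1}{8}n^{\eps_{LR}}\eta$, hence $\frac{1}{\mu_{\a}-E}=(1+O(n^{-2\eps_{LR}}))\frac{\mu_{\a}-E}{(\mu_{\a}-E)^{2}+\eta^{2}}$ and $\frac{1}{(\mu_{\a}-E)^{2}}=(1+O(n^{-2\eps_{LR}}))\frac{1}{(\mu_{\a}-E)^{2}+\eta^{2}}$. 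Summing over $\a\neq\a_{E}$ links $l^{\top}L(E)l$ to $\Re{\iprod{l}{G_{B}(E+\i\eta)l}}$ and $\norm{L(E)l}_{2}^{2}$ to $\eta^{-1}\Im{\iprod{l}{G_{B}(E+\i\eta)l}}$, each up to a single correction coming from the singled-out index $\a_{E}$. This correction is controlled by the isotropic edge delocalization \eqref{eq:B_EvectorDotl}, namely $\iprod{l}{u_{\a_{E}}}^{2}=O(n^{-2+C\eps_{LR}})$.

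The new ingredient compared with Lemma \ref{lem:wGw} is that $\scr T_{(n,1)}$ only contains the trace form of the local semicircle law, not an isotropic version with test vector $l$. I will recover the isotropic estimate from the rank-one structure $B=M+\tau ll^{\top}$ with $\tau=\sqrt{p(n+2)/(1-p)}$. Sherman--Morrison yields
\[
  \iprod{l}{G_{B}(z)l}=\frac{\iprod{l}{G_{M}(z)l}}{1+\tau\iprod{l}{G_{M}(z)l}}.
\]
The isotropic local semicircle law \eqref{eq:IsotropicGreenFunction} for $M$, applied with the admissible test vector $l$, together with the expansion $m_{sc}(E+\i\eta)=-1+O(n^{-1/3+C\eps_{LR}})$ for $|E-2|\le n^{-2/3}\varphi_n^{2\rho}$, gives $a:=\iprod{l}{G_{M}(z)l}=-1+O(n^{-1/3+C\eps_{LR}})$ with $\Im{a}=O(n^{-1/3+C\eps_{LR}})$. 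Writing $\iprod{l}{G_{B}(z)l}=(a+\tau|a|^{2})/|1+\tau a|^{2}$ and using $\tau\sim\sqrt{n}$, which dominates the correction provided $\eps_{LR}$ is small, one obtains $|1+\tau a|\sim\sqrt{n}$. Hence $\iprod{l}{G_{B}(z)l}=O(n^{-1/2})$ and, more importantly, $\Im{\iprod{l}{G_{B}(z)l}}=\Im{a}/|1+\tau a|^{2}=O(n^{-4/3+C\eps_{LR}})$.

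Plugging these into the first paragraph's reductions gives $l^{\top}L(E)l=O(n^{-1/2+C\eps_{LR}})$, comfortably stronger than the required $O(n^{-1/3+C\eps_{LR}})$. For the random estimate,
\[
  \norm{L(E)l}_{2}^{2}=\frac{1+O(n^{-2\eps_{LR}})}{\eta}\,\Im{\iprod{l}{G_{B}(E+\i\eta)l}}+O(n^{-2/3+C\eps_{LR}})=O(n^{-2/3+C\eps_{LR}}).
\]
Since $\t w_{1}$ has independent mean-zero entries with $\norm{\sqrt{n+2}\,\t w_{1}(k)}_{\psi_{2}}\le K(p)$, Hoeffding's inequality gives $\norm{\t w_{1}^{\top}L(E)l}_{\psi_{2}}\le C(p)\norm{L(E)l}_{2}/\sqrt{n+2}=O(n^{-5/6+C\eps_{LR}})$, and for a fixed $E$,
\[
  \PP\bigl(|\t w_{1}^{\top}L(E)l|>n^{-1/3+C\eps_{LR}}\bigr)\le\exp\bigl(-c(p)n\bigr).
\]

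To pass from a fixed $E$ to every $E\in[2-n^{-2/3}\varphi_n^{2\rho},2+n^{-2/3}\varphi_n^{2\rho}]$, I will run the same net argument as at the end of the proof of Lemma \ref{lem:wGw}: an $n^{-100}$-net $\scr N$ of the interval carries both estimates by a union bound, and the gap between $E$ and its closest $E'\in\scr N$ is absorbed using the Lipschitz control $\bigl|\tfrac{1}{\mu_{\a}-E}-\tfrac{1}{\mu_{\a}-E'}\bigr|\le 4\kappa/\eta^{2}$ on the bulk terms, together with the delocalization bound $|\iprod{\t w_{1}}{u_{\a}}|\le\varphi_n/\sqrt{n+2}$ (valid on an event of probability $1-\exp(-cn)$ via Hoeffding plus a union bound) that handles the discontinuity of the singular index $\a_{E}$ at points where $E$ crosses an eigenvalue. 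The main technical obstacle throughout is the absence of an isotropic local semicircle law for $B$ in the definition of $\scr T_{(n,1)}$; Sherman--Morrison lifts it cleanly from the corresponding property of $M$, and once the bound $\Im{\iprod{l}{G_{B}(z)l}}=O(n^{-4/3+C\eps_{LR}})$ is in hand, the rest is routine.
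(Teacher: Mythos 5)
Your proposal is correct, but it reaches \eqref{eq: wLw neg} by a genuinely different route than the paper. The paper's proof never forms an isotropic local law for $B$ at the vector $l$: it applies Hoeffding's inequality to each scalar $\iprod{\t w_{i}}{u_{\a}}$ separately (union bound over $\a$) to get the uniform bound $\iprod{\t w_{i}}{u_{\a}}^{2}\le\varphi_n/n$, then uses Cauchy--Schwarz to split $\sum_{\a\neq 1,\a_E}\frac{\iprod{\t w_{i}}{u_{\a}}\iprod{l}{u_{\a}}}{\mu_{\a}-E}$ into $\bigl(\sum_{\a\neq1}\iprod{l}{u_{\a}}^{2}\bigr)^{1/2}=\left|P_{u_{1}^{\perp}}l\right|=O(n^{-1/2+c\eps_{LR}})$ (i.e., $l$ is essentially the leading eigenvector of $B$) times $\bigl(\sum_{\a\neq1,\a_E}\frac{\iprod{\t w_{i}}{u_{\a}}^{2}}{(\mu_{\a}-E)^{2}}\bigr)^{1/2}$, the latter controlled through the \emph{trace} bound $\sum_{\a\neq1,\a_{E}}(\mu_{\a}-E)^{-2}=O(n^{4/3+C\eps_{LR}})$ from \eqref{eq:B_localstatistic}; the $\a=1$ term is killed by $\mu_{1}\gtrsim\sqrt{n}$, and $l^{\top}L(E)l$ is bounded the same way. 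You instead lift the isotropic law from $M$ to $B$ via Sherman--Morrison, extract $\Im{\iprod{l}{G_{B}(z)l}}=O(n^{-4/3+C\eps_{LR}})$, hence $\norm{L(E)l}_{2}=O(n^{-1/3+C\eps_{LR}})$, and apply one Hoeffding bound to the full linear form. Your version buys a stronger per-point probability ($e^{-cn}$ versus $e^{-c(p)\varphi_n}$) and a reusable isotropic estimate for $B$; the paper's version stays at the level of individual eigenvector overlaps and avoids the resolvent identity. Note that both arguments quietly use information not literally listed in Definition \ref{cond:Btypical} (the paper uses $|P_{u_1^\perp}l|=O(n^{-1/2+c\eps_{LR}})$, you use that $B=M+\tau ll^{\top}$ with $M$ satisfying \eqref{eq:IsotropicGreenFunction}), so your explicit acknowledgment of this is consistent with the paper's own practice; just make sure that when you convert $\sum_{\a\neq\a_E}\frac{\iprod{l}{u_\a}^2}{\mu_\a-E}$ to $\Re{\iprod{l}{G_B}l}$ you split the sum by the sign of $\mu_\a-E$ before applying the term-by-term $(1+O(n^{-2\eps_{LR}}))$ factor, exactly as done in Lemma \ref{lem:wGw}.
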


\begin{proof}
Applcation of Hoeffding's inequality to $\iprod{\t w_{i}}{u_{\a}}$ yields
\[
\PP\left(\iprod{\t w_{i}}{u_{\a}}^{2}\ge\frac{\varphi_n}{n+2}\right)\le\exp\left(-c\left(p\right)\varphi_n\right),
\]
and so
\[
\max_{\a,\,i}\iprod{\t w_{i}}{u_{\a}}^{2}\le\frac{\varphi_n}{n}
\]
with probability greater than $1-\exp\left(-c\left(p\right)\varphi_n\right)$.
In view of this inequality and the fact that $\left(\sum_{\a\neq1}\iprod l{u_{\a}}^{2}\right)^{\frac{1}{2}}=\left|P_{u_{1}^{\perp}}l\right|=O\left(n^{-1/2+c \eps_{LR}}\right)$,
\begin{align*}
\left|\sum_{\a\ne1,\,\a_{E}}\frac{\iprod{\t w_{i}}{u_{\a}}\iprod l{u_{\a}}}{\mu_{\a}-E}\right| & \le\left(\sum_{\a\neq1,\,\a_{E}}\iprod l{u_{\a}}^{2}\right)^{\frac{1}{2}}\left(\sum_{\a\neq1,\,\a_{E}}\frac{\iprod{\t w_{i}}{u_{\a}}^{2}}{\left(\mu_{\a}-E\right)^{2}}\right)^{\frac{1}{2}}\\
 & =O\left(n^{-1+c' \eps_{LR}} \right)\sqrt{\sum_{\a\neq1,\,\a_{E}}\frac{1}{\left(\mu_{\a}-E\right)^{2}}}.
\end{align*}
Again, one can approximate $\sum_{\a\neq1,\,\a_{E}}\frac{1}{\left(\mu_{\a}-E\right)^{2}}$
by $\frac{n}{\eta}\Im{m_{sc}\left(E+\i\eta\right)}$ as  before
and obtain
\[
\sum_{\a\neq1,\,\a_{E}}\frac{1}{\left(\mu_{\a}-E\right)^{2}}=O\left(n^{4/3+C\eps_{LR}}\right).
\]
This shows that
\[
\left|\sum_{\a\ne1,\,\a_{E}}\frac{\iprod{\t w_{i}}{u_{\a}}\iprod l{u_{\a}}}{\mu_{\a}-E}\right|=O\left(n^{-1/3+C\eps_{LR}}\right).
\]
with probability greater than $1-\exp\left(-c\left(p\right)\varphi_n\right)$.

Furthermore, recall that by (\ref{eq:B_LeadingEvalue}), $\mu_{1}\ge\frac{1}{2}\sqrt{\frac{p\left(n+2\right)}{1-p}}$.
Thus $\left|\frac{\iprod{\t w_{i}}{u_{1}}\iprod l{u_{1}}}{\mu_{1}-E}\right|=o\left(\frac{1}{\sqrt{pn}}\right)$, and
\[
|l^{\top} L\left(E\right)l|=
\left|\sum_{\a\ne\a_{E}}\frac{\iprod l{u_{\a}}^{2}}{\mu_{\a}-E}\right|  \le\left(\frac{1}{4}n^{2/3+\eps_{LR}}\sum_{\a\ne1,\,\a_{E}}\iprod l{\t u_{\a}}^{2}\right)+\frac{1}{\mu_{1}-E}
  \le n^{-1/3+C\eps_{LR}}.
\]

Again, this result can extend easily for all $E\in\left[2-n^{-2/3}\varphi_n^{2\rho},\,2+n^{-2/3}\varphi_n^{2\rho}\right]$
by a net argument. We omit the proof here since it is the same as
 the net argument in Lemma \ref{lem:wGw}.
\end{proof}

Combining  Lemmas \ref{lem:wGw} and \ref{prop:Negligiblel},
we obtain Lemma \ref{lem:wGw-1}.

\subsection{Estimate of $s\left(\lambda\right)$}
Recall that in Corollary \ref{cor:typical submatrices}, we  denoted by
$\scr T$ be the set of $(n+2) \times (n+2)$ symmetric matrices all whose $n \times n$ principal submatrices are typical in a sense that they satisfy the conditions in $\scr T_{(n,5)}$.
Suppose that $\lambda_\alpha$ is an eigenvalue of $\t {A}$ and $v_\alpha \in\R^{n+2}$
is the corresponding unit corresponding eigenvector. As in (\ref{eq:EdgeSign}),
\[
\sign\left(v_\alpha\left(1\right)v_\alpha\left(2\right)\right)
=s\left(\lambda_\alpha\right)
=\sign\left(-\frac{w_{1}^{\top}G\left(\lambda_\a\right)w_{1}-d_{11}+\lambda_\a}{w_{1}^{\top}G\left(\lambda_\a\right)w_{2}-d_{12}}\right).
\]

In this section, we will prove the following:
\begin{lem}
\label{lem:Edgeslambda}
Let ${A}$ be the adjacency matrix of a $G(n,p)$ graph, and  let $\lambda_1 \ge \cdots \ge \lambda_n$ be the eigenvalues of the matrix
\[
  \t {A}= \frac{1}{\sqrt{p\left(1-p\right)(n+2)}}{A}
  - \sqrt{\frac{p}{(1-p)(n+2)}}I_{n+2}
\]
Fix $2\le\a\le\varphi_n^{\rho}$. Then
\[
\EE \left(s\left(\lambda_{\alpha} \right) \cdot \ind_{\scr T}({ A})\right)=O \left(n^{-1/3+C\eps_{LR}} \right).
\]
\end{lem}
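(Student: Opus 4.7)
The plan is to combine the determinantal characterization of the eigenvalues of $\t A_p$ from \eqref{eq:EdgeEvalueDetection} with the concentration provided by Lemma \ref{lem:wGw-1}, and then to close the argument with a Berry--Esseen / anti-concentration estimate. Starting from \eqref{eq:EdgeSign},
\[
s(\lambda_\alpha)=\sign\left(-\frac{F_{11}(\lambda_\alpha)-d_{11}+\lambda_\alpha}{F_{12}(\lambda_\alpha)-d_{12}}\right),\qquad F_{ij}(E):=w_i^\top G(E)w_j,
\]
I first invoke Corollary \ref{cor:typical submatrices} to replace $\ind_{\scr T}(\t A_p)$ by $\ind_{\scr T_{(n,1)}}(B)$ with $B=\t A_p^{(1,2)}$, at a cost of $O(n^{-1/3+2\eps_{LR}})$. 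In this formulation $B$, $W$ and $D$ are independent, so Lemma \ref{lem:wGw-1} is available: writing $X_i:=\iprod{w_i}{u_{\alpha_E}}$, $r:=\mu_{\alpha_E}-\lambda_\alpha$ and $c_i:=\lambda_\alpha-1-d_{ii}+O(n^{-2\eps_{LR}})$, with probability at least $1-e^{-c\varphi_n}$ in $(w_1,w_2)$,
\[
F_{ii}-d_{ii}+\lambda_\alpha=c_i+\frac{X_i^2}{r}+O(n^{-1/3+C\eps_{LR}}),\qquad F_{12}-d_{12}=-d_{12}+\frac{X_1X_2}{r}+O(n^{-1/3+C\eps_{LR}}).
\]

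Next, I extract a clean closed form for $s(\lambda_\alpha)$. Plugging the approximations into the exact relation $(F_{11}-d_{11}+\lambda_\alpha)(F_{22}-d_{22}+\lambda_\alpha)=(F_{12}-d_{12})^2$ and solving to leading order gives
\[
r\;\approx\;-\frac{c_1X_2^2+c_2X_1^2+2d_{12}X_1X_2}{c_1c_2-d_{12}^2},
\]
which is negative because the numerator is a positive-definite quadratic form in $(X_1,X_2)$ whenever $|d_{12}|<1$ (and here $|d_{12}|=O(n^{-1/2})$). Substituting back yields $c_i+X_i^2/r\approx X_{3-i}^2/(X_1^2+X_2^2)\ge 0$, so both diagonal factors are non-negative and $s(\lambda_\alpha)=-\sign(F_{12}-d_{12})$. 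Using $r\approx-(X_1^2+X_2^2)$ and unwinding the signs, one obtains the clean approximation
\[
s(\lambda_\alpha)=\sign\!\left(X_1X_2+d_{12}(X_1^2+X_2^2)\right),
\]
valid on the event $\{|F_{12}-d_{12}|\ge c\,n^{-1/3+C\eps_{LR}}\}$ on which the Lemma's error cannot flip the sign.

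It then remains to estimate $\EE[\sign(X_1X_2+d_{12}(X_1^2+X_2^2))\mid B]$. I split it as $\EE[\sign(X_1X_2)\mid B]+\EE[R\mid B]$, where $R$ is supported on the event $\{|X_1X_2|\le|d_{12}|(X_1^2+X_2^2)\}$, i.e.\ (up to swapping indices) $|X_1|\le 2|d_{12}||X_2|$. Since $|d_{12}|=O(n^{-1/2})$, and since Berry--Esseen combined with the $\ell_\infty$-delocalization \eqref{eq:eq:B_EvectorLinifinity} (giving $\|u_{\alpha_E}\|_\infty=O(n^{-2/3+C\eps_{LR}})$) shows that $X_1$ has an approximately Gaussian distribution of scale $1/\sqrt n$, this event has probability $O(n^{-1/2})$. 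For the main term, conditional independence of $X_1,X_2$ given $B$ yields $\EE[\sign(X_1X_2)\mid B]=\bigl(\EE[\sign X_1\mid B]\bigr)^2$, and Berry--Esseen applied to $X_i=\sum_k\t w_i(k)u_{\alpha_E}(k)$ makes each factor $O(n^{-2/3+C\eps_{LR}})$, so this contribution is $O(n^{-4/3+C\eps_{LR}})$. Adding the $O(n^{-1/3+C\eps_{LR}})$ loss coming from the regime where the Lemma error dominates $|F_{12}-d_{12}|$ and the $O(n^{-1/3+2\eps_{LR}})$ loss from the initial reduction produces the announced bound.

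The main obstacle is the sign-flip analysis between $\sign(F_{12}-d_{12})$ and $\sign(X_1X_2+d_{12}(X_1^2+X_2^2))$: the approximation error of Lemma \ref{lem:wGw-1} is precisely of order $n^{-1/3+C\eps_{LR}}$, which is the scale at which the sign of $F_{12}-d_{12}$ may flip. Controlling the probability of the associated bad event reduces to an anti-concentration statement for $Y=X_1/X_2$ near the two roots of $Y/(1+Y^2)+d_{12}=0$: the small root $Y\approx-d_{12}$ is handled by the approximate Gaussian density of $X_1+d_{12}X_2$ (giving probability $O(n^{-1/3+C\eps_{LR}})$), while the large root $Y\approx-1/d_{12}\sim\sqrt n$ would require $|X_1|\sim 1$, an event of exponentially small probability by the subgaussian tail of $\t w_1$.
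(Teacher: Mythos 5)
Your overall strategy coincides with the paper's: pass to the conditional setting where $B$, $W$, $D$ are independent, use the determinant characterization of $\lambda_\alpha$ together with the concentration of Lemma \ref{lem:wGw-1} to reduce $s(\lambda_\alpha)$ to the sign of an explicit function of $X_i=\iprod{w_i}{u_\alpha}$, discard an anti-concentration event of probability $O(n^{-1/3+C\eps_{LR}})$, and finish with Berry--Esseen plus the independence of $X_1$ and $X_2$, so that $\E[\sign(X_1X_2)\mid B]=(\E[\sign X_1\mid B])^2$ is negligible. The endgame and the final accounting of errors are correct.

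There is, however, a genuine gap in the step where you fix the signs of the diagonal factors. You substitute the approximate root $r\approx-(X_1^2+X_2^2)$ back in, obtain $F_{ii}-d_{ii}+\lambda_\alpha\approx X_{3-i}^2/(X_1^2+X_2^2)\ge 0$, and conclude that both factors are non-negative. But each approximation carries an additive error of order $n^{-1/3+C\eps_{LR}}$, while the approximate value $X_{3-i}^2/(X_1^2+X_2^2)$ can be far smaller than that error. Even on your good event $\{|F_{12}-d_{12}|\ge c\,n^{-1/3+C\eps_{LR}}\}$, which only forces $\min_i|X_i|/\max_i|X_i|\gtrsim n^{-1/3+C\eps_{LR}}$, the smaller of the two quantities $X_{3-i}^2/(X_1^2+X_2^2)$ may be as small as $n^{-2/3+C\eps_{LR}}$, well below the error scale; and the regime where the sign of a numerator cannot be read off the approximation, roughly $n^{-1/3}\lesssim\min_i|X_i|/\max_i|X_i|\lesssim n^{-1/6}$, has probability of order $n^{-1/6}$ by the Gaussian approximation of the $X_i$ --- far too large to discard if one is aiming at $O(n^{-1/3+C\eps_{LR}})$. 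The conclusion is nevertheless true, and can be rescued in either of two ways: (i) the paper's route, namely that $E\mapsto w_i^\top G(E)w_i-d_{ii}+E$ is strictly increasing on $(\mu_\alpha,\mu_{\alpha-1})$ and $\lambda_\alpha$ lies above the largest zero $E_0$ of both factors, so both are positive at $\lambda_\alpha$; or (ii) the observation that at $E=\lambda_\alpha$ the two factors have the same sign because their product equals $(F_{12}-d_{12})^2\ge 0$, while their sum is $c_1+c_2+(X_1^2+X_2^2)/r+O(n^{-1/3+C\eps_{LR}})=1+o(1)>0$, forcing both to be positive. As written, your argument is missing one of these inputs. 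Two smaller points: replacing $\ind_{\scr T}(\t A_p)$ by $\ind_{\scr T_{(n,1)}}(B)$ at the very start discards the information needed to locate $\lambda_\alpha$ inside $(\mu_\alpha,\mu_{\alpha-1})$ and hence to identify $\alpha_E=\alpha$ at $E=\lambda_\alpha$ (the paper keeps both indicators and removes the difference only at the end, via Corollary \ref{cor:typical submatrices}); and near the large root $Y\approx-1/d_{12}$ the relevant event is not only $|X_1|\sim 1$ but also $|X_2|\lesssim|d_{12}||X_1|$, which is still of probability $O(n^{-1/2+o(1)})$ and therefore harmless.
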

As $\scr T$ pertains to all $n \times n$ principal submatrices, the same bound holds for \\ $\EE \left(\sign\left(v_\alpha\left(i\right)v_\alpha\left(j \right)\right) \cdot \ind_{\scr T}(\t {\ A})\right)$ for any $i\neq j$.

Once this lemma is proved, Theorem \ref{thm: edge} follows easily:
\begin{proof}
For  $2\le\a\le\varphi_n^{\rho}$, we have $\EE (\sign(u_{\alpha}(i) u_{\alpha}(j) \mid \scr{T}) = O \left(n^{-1/3+C\eps_{LR}} \right)$ for all$i \neq j$. Hence,
\[
    \EE \left((\sum_{i=1}^{n+2}\sign(u_\a(i)))^2| \scr{T}\right)= O(n^{5/3+C\eps_{LR}}).
\]
Applying Markov's inequality we get
$$
\PP \left( |\sum_{i=1}^{n+2}\sign(u_\a(i))|> n^{5/3+C'\eps} \right)<n^{-\delta_{LR}}+n^{-\eps_{LR}}.
$$
\end{proof}
The proof of this lemma will be based on the concentration we get
from Lemma \ref{lem:wGw-1}.
Let  $B$ be the $n \times n$ principal submatrix containing the last $n$ rows and columns.
If $\t {A} \in \scr T$, then $B \in \scr T_{(n,1)}$. 

Consider $\alpha = 2$ first. Let $\mu_1' \ge \mu_{n+1}'$ be the eigenvalues of the $(n+1) \times (n+1)$ matrix containing the last $(n+1)$ rows and columns of $\t {A}$.
Per \eqref{eq:B_rigidity} for $\t {A}$,
$\lambda_2 \in \left[2-n^{-2/3}\varphi_n^{2\rho},\,2+n^{-2/3}\varphi_n^{2\rho}\right]$, so interlacing and Lemma \ref{lem:LevelRepulsionForMinor}  imply that
\[
\mu_2' \le \lambda_2 \le \mu_2' + \frac{\varphi_n^C}{n} < \mu_1'
\]
where $\mu_1'$ satisfies \eqref{eq:B_LeadingEvalue}.
Repeating this argument for $B$, in view of \eqref{eq:B_LevelRepulsion} and \eqref{eq:B_LeadingEvalue}, we conclude that
$\lambda_2 \in [\mu_2, \mu_1]$.
For $2< \alpha \le \varphi_n^{\rho}$,  \eqref{eq:B_LevelRepulsion} similarly yields $\lambda_\alpha \in [\mu_\alpha, \mu_{\alpha-1}]$.

Condition on the  submatrix  $B$.
Since $\a\le\varphi_n^{\rho}$,
by the estimate that $\int_{2-t}^{2}\frac{1}{2\pi}\sqrt{4-x^{2}}\d x\ge\frac{1}{2\pi}t^{3/2}$,
we have $2-\gamma_{\a}\le n^{-2/3}\varphi_n^{\rho}$ and thus $2-\mu_{\a}\le n^{-2/3}\varphi_n^{2\rho}$
due to rigidity of eigenvalues (\ref{eq:B_rigidity}). 

Let $\scr A_{wGw}$ be the set of $n \times 2$ matrices $W$ such that
\eqref{eq: wGw estimate-1-1}   in Lemma \ref{lem:wGw-1} holds. Specifically,
 $\scr A_{wGw}$ is defined by the condition
\begin{equation}
\forall i,\,j\in\left\{ 1,\,2\right\} \;w_{i}^{\top}G\left(E\right)w_{j}=-\left(1+O\left(n^{-2\eps_{LR}}\right)\right)\delta_{ij}+\frac{\iprod{w_{i}}{u_{\a_{E}}}\iprod{w_{j}}{u_{\a_{E}}}}{\mu_{\a_{E}}-E}+O\left(n^{-1/3+C_{1}\eps_{LR}}\right)
\label{eq: wGw estimate in Lem s(lambda)}
\end{equation}
for all $E\in\left[2-n^{-2/3}\varphi_n^{2\rho},\,2+n^{-2/3}\varphi_n^{2\rho}\right]$
and a universal constant $C_{1}>0$.
Here, $\a_{E}\in\left[n\right]$ is the integer so that $\left|\mu_{\a_{E}}-E\right|\le\min_{\a\in\left[n\right]}\left|\mu_{\a}-E\right|$.

Before we move on to the proof directly, let us introduce another set.
 Let $\scr A_{W}$ be  a set of $W$
such that for $i\in\left\{ 1,\,2\right\} $
\begin{equation}
n^{-1/3+\kappa\eps_{LR}}\le\sqrt{n}\left|\iprod{\t w_{i}}{u_{\a}}\right|\le\log^2 n\label{eq:scrAdot-1-1}
\end{equation}
where $\kappa\ge\max\left\{ 2C_{1},\,8\right\} $ and
\[
\t w_{i}=w_{i} - \sqrt{\frac{p}{1-p}}l.
\]
\begin{lem}
Let the $W$ be the $n \times 2$ block $W$ of $\t {A}$ defined in \eqref{eq:ABlockDecomposition}.
With the notation above, we have
\[
\PP\left(W \in \scr A_{W}\right) \ge 1-n^{-1/3+2\kappa\eps_{LR}},
\]
and
\begin{equation}
\PP\left(\iprod{\t w_{i}}{u_{\a}}>0\right)=\frac{1}{2}+O\left(n^{-1/3+5\eps_{LR}}\right) \quad \text{for } i=1,2.\label{eq:sign}
\end{equation}

\end{lem}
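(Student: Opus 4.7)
The plan is to split $\scr A_W$ into the upper and lower bounds on $\sqrt n|\iprod{\t w_i}{u_\alpha}|$, and derive both the lower bound and the sign probability \eqref{eq:sign} from a single Berry--Esseen estimate. Throughout, the matrix $B$ (hence $u_\alpha$) is regarded as fixed, and the randomness is in $\t w_1,\t w_2$, whose rescaled entries $\sqrt{n+2}\,\t w_i(k)$ are i.i.d., centered, of unit variance, with $\psi_2$-norm bounded by a constant $K(p)$.

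Set $S_{i,\alpha}:=\sqrt{n+2}\iprod{\t w_i}{u_\alpha}$. The upper bound follows directly from Hoeffding's inequality: since $\norm{u_\alpha}_2=1$, we have $\norm{S_{i,\alpha}}_{\psi_2}\le CK(p)$, and hence $\PP(|S_{i,\alpha}|>\log^2 n)\le 2\exp(-c(p)(\log n)^4)$, which is negligible. For the lower bound I would apply Berry--Esseen to $S_{i,\alpha}$, which has variance $1$ and total third absolute moment at most $M(p)\sum_k|u_\alpha(k)|^3\le M(p)\norm{u_\alpha}_\infty$. The $\ell_\infty$-delocalization $\norm{u_\alpha}_\infty\le n^{-1/3+C\eps_{LR}}$ guaranteed by $B\in\scr T_{(n,1)}$ (cf.\ Lemma \ref{lem: R1S_EigenVector}) then gives
\[
\sup_{t\in\R}\bigl|\PP(S_{i,\alpha}\le t)-\Phi(t)\bigr|\le C' n^{-1/3+C\eps_{LR}}.
\]
Evaluating at $t=\pm n^{-1/3+\kappa\eps_{LR}}$ and using $\Phi(x)-\Phi(-x)\le 2|x|/\sqrt{2\pi}$ yields
\[
\PP\bigl(|S_{i,\alpha}|\le n^{-1/3+\kappa\eps_{LR}}\bigr)\le C n^{-1/3+\kappa\eps_{LR}}+2C' n^{-1/3+C\eps_{LR}}\le n^{-1/3+2\kappa\eps_{LR}},
\]
where the last inequality uses the hypothesis $\kappa\ge\max\{2C_1,8\}$ and $\eps_{LR}$ small enough. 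A union bound over $i\in\{1,2\}$ preserves the estimate, proving $\PP(W\in\scr A_W)\ge 1-n^{-1/3+2\kappa\eps_{LR}}$. The same Berry--Esseen estimate evaluated at $t=0$ immediately gives $\PP(\iprod{\t w_i}{u_\alpha}>0)=\tfrac12+O(n^{-1/3+C\eps_{LR}})$, which fits into the form of \eqref{eq:sign} after tracing the constant $C$ through the delocalization bound.

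The only delicate step is the Berry--Esseen application itself, since it is the moment at which the $\ell_\infty$-delocalization of the edge eigenvectors of $B$ is converted into small-ball and sign-probability estimates of the required strength. Any weakening of the delocalization bound would propagate immediately into a weaker exponent in the conclusion. The precise numerical constants $2\kappa$ and $5$ in the target are not forced but are chosen to leave comfortable room for the universal Berry--Esseen constant and the constant appearing in the $\ell_\infty$-delocalization bound.
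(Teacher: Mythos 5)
Your proposal follows essentially the same route as the paper: Hoeffding for the upper bound in the definition of $\scr A_W$, and a Berry--Esseen comparison of $\sqrt{n+2}\iprod{\t w_i}{u_\alpha}$ with a standard Gaussian, fed by the $\ell_\infty$-delocalization of $u_\alpha$ from $B\in\scr T_{(n,1)}$, to control both the small-ball probability at scale $n^{-1/3+\kappa\eps_{LR}}$ and the sign probability at $t=0$. The only cosmetic difference is that you invoke the summed-third-moment form of Berry--Esseen while the paper uses the ratio form; both reduce to the same $O(\norm{u_\alpha}_\infty)$ error.
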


\begin{proof}
The upper bound in \eqref{eq:scrAdot-1-1} holds with the desired probability due to Hoeffding's inequality.
We will estimate the probability that the lower bound holds and prove \eqref{eq:sign} at the same time.
 Let $X_{k}:=\sqrt{n+2}\t w_{1}\left(k\right)u_{\a}\left(k\right)$.
Since $\t w_{1}\left(k\right)$ has mean $0$ and variance $\frac{1}{n+2}$,
we set
\begin{align*}
S_{n} & =\frac{\sum_{k\in\left[n\right]}X_{k}}{\sum_{k\in\left[n\right]}\EE X_{k}^{2}}
=\sqrt{n+2}\iprod{\t w_{1}}{u_{\a}}
\end{align*}
Observe that $\EE X_{k}^{2}=u_{\a}\left(k\right)^{2}$ and $\EE X_{k}^{3}\le c\left(p\right)\left|u_{\a}\left(k\right)\right|^{3}$
where $c\left(p\right)>0$ is a constant depends on $p$. Let $F_{n}$
and $\Phi$ be the cumulative distributions of $S_{n}$ and the standard normal
random variable respectively. By the Berry-Esseen Theorem (see, e.g., \cite[Theorem 2.2.17]{Stroock2011})
we have
\[
\sup_{x\in\R}\left|F_{n}\left(x\right)-\Phi\left(x\right)\right|
\le C\left(\sum_{i=1}^{n}\EE X_{i}^{2}\right)^{-1/2} \cdot \max_{i}\frac{\EE\left|X_{i}\right|^{3}}{\EE X_{i}^{2}}
\le c\left(p\right)\frac{\norm{u_{\a}}_{\infty}}{\norm{u_{\a}}_{2}}.
\]

Recall that from (\ref{eq:B_l_infinity-1}) in the defintion of $\scr T_{(n,1)}$,
we have the $l_{\infty}$-norm bound: $\norm{u_{\a}}_{\infty}\le n^{-1/3+4\eps_{LR}}$.
Together with $\norm{u_{\a}}_{2}=1$ it yields
\[
\sup_{x\in\R}\left|F_{n}\left(x\right)-\Phi\left(x\right)\right|\le n^{-1/3+5\eps_{LR}}
\]
if $n$ is large enough. Thus,
\[
\PP\left(\sqrt{n}\left|\iprod{\t w_{1}}{u_{\a}}\right|\le n^{-1/3+\kappa\eps_{LR}}\right)\le\PP\left(\sqrt{n}\left|g\right|\le n^{-1/3+\kappa\eps_{LR}}\right)+2n^{-1/3+5\eps_{LR}}\le n^{-1/3+1.5\kappa\eps_{LR}},
\]
where $g\sim N\left(0,\,1\right)$ is a normal random variable. Furthermore,
we also obtain (\ref{eq:sign}) by comparing $\Phi$ and $F_{n}$.
\end{proof}
\begin{proof}[Proof of Lemma \ref{lem:Edgeslambda}]
By (\ref{eq:EdgeEvalueDetection}), if $\lambda\in\R$ is an eigenvalue
of $\t {A}$, then $\det\left(W^{\top}G\left(\lambda\right)W-D+\lambda I_{2}\right)=0$.
Let
\[
f\left(E\right):=\frac{\left(w_{1}^{\top}G\left(E\right)w_{1}-d_{11}+E\right)\left(w_{2}^{\top}G\left(E\right)w_{2}-d_{22}+E\right)}{\left(w_{1}^{\top}G\left(E\right)w_{2}-d_{12}\right)^{2}}.
\]
Thus, $\lambda$ is an eigenvalue whenever $f\left(\lambda\right)=1$. We
will use the function $f\left(E\right)$ to determine the location of the eigenvalues.

Let $\scr A_D$ be the set of all $2 \times 2$ symmetric matrices $D$ such that $\max_{i,j\in \{1,2\}}\left|d_{ij}\right|=O\left(c\left(p\right)n^{-1/2}\right)$.
Recall the definitions of $A_{wGw}$ and $A_W$
from \eqref{eq: wGw estimate in Lem s(lambda)}
and \eqref{eq:scrAdot-1-1}, respectively.
Assume that $W \in A_{wGw} \cap A_{W}$ and $D \in \scr A_D$.
We will see below that this is a likely event.

Under these conditions, the argument becomes deterministic.
By (\ref{eq:B_l_infinity-1}) from the definition of $\scr T_{(n,1)}$, 
we have $\left|\iprod{u_{\a}}l\right|\le n^{-1+2\eps_{LR}}$. Hence,
\[
\iprod{w_{i}}{u_{\a}}=\left(1+o\left(1\right)\right)\iprod{\t w_{i}}{u_{\a}}
\]
and in particular $\iprod{w_{i}}{u_{\a}}$ and $\iprod{\t w_{i}}{u_{\a}}$
have the same sign.

Observe that $E\mapsto w_{1}^{\top}G\left(E\right)w_{1}-d_{11}+E$
is a strictly increasing function on $\left(\mu_{\alpha},\,\mu_{\a-1}\right)$.
It tends to $-\infty$ as $E\rightarrow\mu_{\a}^{+}$ and $+\infty$
as $E\rightarrow \mu_{\alpha-1}^{-}$. Thus, it crosses $0$ only once. Let $E_{0}$ be
maximum of the roots of $w_{1}^{\top}G\left(E\right)w_{1}-d_{11}+E$ and
$w_{2}^{\top}G\left(E\right)w_{2}-d_{22}+E$ on $\left(\mu_{\a},\,\mu_{\a-1}\right)$.
Then by (\ref{eq: wGw estimate in Lem s(lambda)}) and $\left|d_{ij}\right|=O\left(c\left(p\right)n^{-1/2}\right)$,
\[
-\left(1+O\left(n^{-2\eps_{LR}}\right)\right)+\frac{\iprod{w_{i}}{u_{\a_{E_0}}}^2}{\mu_{\a_{E_0}}-E_0} +E_0 =0
\]
for some $i \in \{1,2\}$.
As $\mu_{\a-1}>E_0 > \mu_\a  \ge 2-n^{-2/3}\varphi_n^{2\rho}$, this implies that $E_0>\mu_{\a_{E_0}}$, and thus $\a_{E_0}=\a$.
Moreover,
 $E_0-1 = 1+O\left(n^{-2\eps_{LR}}\right)$, and so
\[
E_{0}=\left(1+O\left(n^{-2\eps_{LR}}\right)\right)\max\left\{ \iprod{w_{1}}{u_{\a}}^{2},\,\iprod{w_{2}}{u_{\a}}^{2}\right\} +\mu_{\a}.
\]

For $E>E_{0}$, both $w_{1}^{\top}G\left(E\right)w_{1}-d_{11}+E$
and $w_{2}^{\top}G\left(E\right)w_{2}-d_{22}+E$ are positive.
Setting
\[
E_{1}=2\max\left\{ \iprod{w_{1}}{u_{\a}}^{2},\,\iprod{w_{2}}{u_{\a}}^{2}\right\} +\mu_{\a},
\]
for $E\in\left[\mu_{\a},\,E_{1}\right]$, we also have $\a_{E}=\a$, and
\begin{align} \label{eq:obstacle}
\left|\frac{\iprod{w_{1}}{u_{\a}}\iprod{w_{2}}{u_{\a}}}{\mu_{\a}-E}\right|
&\ge\left|\frac{\iprod{w_{1}}{u_{\a}}\iprod{w_{2}}{u_{\a}}}{\mu_{\a}-E_{1}}\right|
=\frac{1}{2}\min\left\{ \left|\frac{\iprod{w_{1}}{u_{\a_{E}}}}{\iprod{w_{2}}{u_{\a}}}\right|,\,\left|\frac{\iprod{w_{2}}{u_{\a}}}{\iprod{w_{1}}{u_{\a}}}\right|\right\} \\
& >\log^{-2} n \cdot n^{-1/3+\kappa\eps_{LR}}. \notag
\end{align}
by (\ref{eq:scrAdot-1-1}). Hence, $w_{1}^{\top}G\left(E\right)w_{2}-d_{12}$
has no zeros in the interval $\left[\lambda_{\a},\,E_{1}\right]$. Furthermore,
because
\[
\min\left\{ \left|\frac{\iprod{w_{1}}{u_{\a}}}{\iprod{w_{2}}{u_{\a}}}\right|,\,\left|\frac{\iprod{w_{2}}{u_{\a}}}{\iprod{w_{1}}{u_{\a}}}\right|\right\} \le1,
\]
 using
 (\ref{eq: wGw estimate in Lem s(lambda)}) and $\left|d_{ij}\right|=O\left(c\left(p\right)n^{-1/2}\right)$ again, we get
\begin{align*}
\left(w_{1}^{\top}G\left(E_{1}\right)w_{2}-d_{12}\right)^{2} & =\left(\text{\ensuremath{\frac{\iprod{w_{1}}{u_{\a}}\iprod{w_{2}}{u_{\a}}}{\mu_{\a}-E_{1}}}+O\ensuremath{\left(n^{-1/3+C_{1}\eps}\right)}}\right)^{2}\\
 & =\left(\frac{1}{2}\min\left\{ \left|\frac{\iprod{w_{1}}{u_{\a}}}{\iprod{w_{2}}{u_{\a}}}\right|,\,\left|\frac{\iprod{w_{2}}{u_{\a}}}{\iprod{w_{1}}{u_{\a}}}\right|\right\} +O\left(n^{-1/3+C_{1}\eps_{LR}}\right)\right)^{2}\\
 & \le\frac{1}{4}+o\left(1\right)\le\frac{1}{2}.
\end{align*}
Together with
\begin{align*}
\left(w_{1}^{\top}G\left(E_{1}\right)w_{1}-d_{11}+E_{1}\right)\left(w_{2}^{\top}G\left(E_{1}\right)w_{2}-d_{22}+E_{1}\right) & =1+o\left(1\right)
\end{align*}
this yields
$
f\left(E_{1}\right) >1.
$
Since
$
f\left(E_{0}\right)=0,
$
 there exists $\lambda\in\left(E_{0},\,E_{1}\right)$ such
that $f\left(\lambda\right)=1$, which shows that $\lambda_\a\in\left(E_{0},\,E_{1}\right)$.

Now we will focus on $s\left(\lambda_\a\right)$.
Since $\lambda_\a>E_{0}$ , the $w_{1}^{\top}G\left(\lambda_\a\right)w_{1}-d_{11}+\lambda_\a$
is positive. Also,
\[
w_{1}^{\top}G\left(\lambda_\a\right)w_{2}-d_{12}=\frac{\iprod{w_{1}}{u_{\alpha}}\iprod{w_{2}}{u_{\alpha}}}{\mu_{\a}-\lambda_\a}+O\left(n^{-1/3+C\eps_{LR}}\right),
\]
and the magnitude of the leading term is significantly greater than $O\left(n^{-1/3+C\eps_{LR}}\right)$
by \eqref{eq:obstacle}.  Since $\mu_{\a}-\lambda_\a<0$, the expression above has the same sign
as $-\iprod{w_{1}}{u_{\a}}\iprod{w_{2}}{u_{\a}}$.
Therefore, we conclude  that
\[
s\left(\lambda_\alpha\right)
=\sign \left(-\frac{w_{1}^{\top}G\left(\lambda_\a\right)w_{1}-d_{11}+\lambda}{w_{1}^{\top}G\left(\lambda_\a\right)w_{2}-d_{12}}\right)
=\sign \left( \iprod{w_{1}}{u_{\a}}\iprod{w_{2}}{u_{\a}} \right)
=\sign \left( \iprod{\t w_{1}}{u_{\a}}\iprod{\t w_{2}}{u_{\a}} \right)
\]
for any $\t {A} \in \scr T, \ W \in \scr  A_{wGw} \cap \scr A_{W}$, and $D \in \scr A_D$.

It remains to estimate the expectation of  $s\left(\lambda_\alpha\right)$.
Recall that we conditioned on the block $B$, and $W$ and $D$ are independent of $B$. Denote this conditional expectation and probability  by $\EE_{W,\,D}$ and $\PP_{W,\,D}$.
We have
\begin{align*}
\left| \EE_{W,\,D} \left(s\left(\lambda_\a\right) \ind_{\scr T}({A}) \right) \right|
\le  & \left | \EE_{W,\,D} \left(s\left(\lambda_\a\right) \ind_{\scr T}( {A}) \ind_{\scr A_{W}}(W) \ind_{\scr A_{wGw}}(W) \ind_{\scr A_D}(D) \right)\right|\\
 & +\PP_{W,\,D}\left( W \notin \scr A_{wGw} \cup \scr A_W \right) + \PP_{W,\,D}\left( D \notin \scr A_D  \right) \\
= & \left | \EE_{W,\,D} \left(\sign \left( \iprod{\t w_{1}}{u_{\a}}\iprod{\t w_{2}}{u_{\a}} \right) \ind_{\scr T}({A}) \ind_{\scr A_{W}}(W) \ind_{\scr A_{wGw}}(W) \ind_{\scr A_D}(D) \right)\right| \\
& +O\left(n^{-1/3+C'\eps_{LR}}\right).
\end{align*}
We can get rid of the indicators in the leading term in a similar way:
\begin{align*}
 & \left | \EE_{W,\,D} \left(\sign \left( \iprod{\t w_{1}}{u_{\a}}\iprod{\t w_{2}}{u_{\a}} \right) \ind_{\scr T}({A}) \ind_{\scr A_{W}}(W) \ind_{\scr A_{wGw}}(W) \ind_{\scr A_D}(D) \right)\right| \\
\le  & \left | \EE_{W,\,D} \left(\sign \left( \iprod{\t w_{1}}{u_{\a}}\iprod{\t w_{2}}{u_{\a}} \right) \ind_{\scr T}({A}) \right)\right|
  +\PP_{W,\,D}\left( W \notin \scr A_{wGw} \cup \scr A_W \right) + \PP_{W,\,D}\left( D \notin \scr A_D  \right) \\
\le  & \left | \EE_{W,\,D} \left(\sign \left( \iprod{\t w_{1}}{u_{\a}}\iprod{\t w_{2}}{u_{\a}} \right) \ind_{\scr T}( {A}) \right)\right|
+O\left(n^{-1/3+C'\eps_{LR}}\right).
\end{align*}
Removing the conditioning over $B$, we get
\begin{align*}
& \quad \left| \EE \left(s\left(\lambda_\a\right) \ind_{\scr T}(\t {A}) \right) \right| \\
 &\le \left | \EE \left(\sign \left( \iprod{\t w_{1}}{u_{\a}}\iprod{\t w_{2}}{u_{\a}} \right) \ind_{\scr T}({A}) \right)\right|
+O\left(n^{-1/3+C'\eps_{LR}}\right) \\
&\le \left | \EE \left(\sign \left( \iprod{\t w_{1}}{u_{\a}}\iprod{\t w_{2}}{u_{\a}} \right) \ind_{\scr T_{(n,1)}}({A}^{(1,2)}) \right)\right|
  +\EE \left|  \ind_{\scr T}({A}) - \ind_{\scr T_{(n,1)}}(\t { A}^{(1,2)} ) \right|  +O\left(n^{-1/3+C'\eps_{LR}}\right)
\end{align*}
In view of Corollary \ref{cor:typical submatrices}, the second term does not exceed $n^{-1/3+2\eps_{LR}}$.
To bound the first term, we condition again on the block $B=\t { A}^{(1,2)}$ such that $\t {A}^{(1,2)} \in \scr T_{(n,1)}$ and apply \eqref{eq:sign}.
By this inequality,
 \[
 P_{i}:=\PP\left[\iprod{\t w_{1}}{u_{a}}\ge0 \mid \t { A}^{(1,2)}\right]:=\frac{1}{2}+p_{i}
 \]
where $p_{i}=O\left(n^{-1/3+5\eps_{LR}}\right)$.
Using the independence of $\t w_{1}$ and $\t w_{2}$, we get
\begin{align*}
\EE\left[\sign\left(\iprod{\t w_{1}}{u_{a}}\iprod{\t w_{2}}{u_{\a}}\right) \mid \t {A}^{(1,2)} \right]
 & =P_{1}P_{2}+(1- P_{1})  (1-P_{2})-P_{1}(1-P_{2})-(1- P_{1}) P_{2}\\
 & =4p_{1}p_{2}=O\left(n^{-2/3+10\eps_{LR}}\right).
\end{align*}
Removing the conditioning completes the proof of Lemma \ref{lem:Edgeslambda}.
\end{proof}

\section{Appendix}

In this section we establish the spectral properties of symmetric random matrices appearing in Definition \ref{def: Ank}.
Namely, we prove the following lemma:
\begin{lem}
  Fix $p\in (0,1)$, $D>0$. Let $H_p$ be a symmetric $n\times n$ matrix with zero diagonal and i.i.d entries above the diagonal.
  The non-diagonal entries have the distribution:
\[
  h_{ij}=\begin{cases}
\sqrt{\frac{1-p}{p}}\frac{1}{\sqrt{n}} & \text{with probability }p,\\
-\sqrt{\frac{p}{1-p}}\frac{1}{\sqrt{n}} & \text{with probability }1-p.
\end{cases}
\]
  Then, $H_p$ satisfies \eqref{eq:IsotropicGreenFunction} -- \eqref{eq:M_evectorl_infinity} with probability greater than $1-n^{-D}$.
Furthermore, for a sufficiently small $\eps>0$, there exists $\delta>0$ such that $H_p\in \scr{LR}\left( n, n^{-2/3-\eps}\right)$ with probability greater than
$1-n^{-\delta}$.
\end{lem}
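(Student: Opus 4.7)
The lemma splits into two parts. Properties \eqref{eq:IsotropicGreenFunction}--\eqref{eq:M_evectorl_infinity} are standard for Wigner-type matrices whose strictly upper-triangular entries are i.i.d., centered, have variance $1/n$, and are uniformly bounded by $C(p)/\sqrt{n}$. The plan is to cite the relevant theorems directly: the isotropic local semicircle law of \cite[Theorems 2.12 and 2.16]{EJP3054} yields \eqref{eq:IsotropicGreenFunction} and \eqref{eq:IsotropicDelocalization}, and the edge rigidity together with the $\ell^\infty$-delocalization results of \cite[Theorems 2.1 and 2.2]{Erdos2012b} yield \eqref{eq:M_rigidity} and \eqref{eq:M_evectorl_infinity}. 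Each of these applies with probability at least $1-n^{-D}$ for any fixed $D$, which more than covers the probability required in the statement. The zero diagonal of $H_p$ is a perturbation of Hilbert--Schmidt norm $O(1/\sqrt{n})$ and does not affect any of these bounds.

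The substantive content is the level repulsion \eqref{eq:LevelRepulsion}, which the plan is to deduce from the corresponding statement for the Gaussian Orthogonal Ensemble (GOE). For GOE the joint eigenvalue density contains the Vandermonde factor $\prod_{i<j}|\lambda_i-\lambda_j|$, which provides linear repulsion at the microscopic scale. Combined with the edge rigidity, which implies that only $O(\varphi_n^{C\rho})$ eigenvalues can lie in the window $[2-n^{-2/3}\varphi_n^{3\rho},\,2+n^{-2/3}\varphi_n^{3\rho}]$, the standard calculation using the Pfaffian representation of the edge process yields that the probability of two GOE eigenvalues in this window being within $s:=n^{-2/3-\eps}$ of each other is bounded by $n^{-\delta_0(\eps)}$ for some $\delta_0>0$. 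To transfer the bound to $H_p$, my plan is to invoke the short-time Dyson Brownian motion argument. Consider the Ornstein--Uhlenbeck interpolation $H_t=e^{-t/2}H_p+\sqrt{1-e^{-t}}\,G$ with $G$ an independent GOE matrix, and take $t=n^{-1/3-\eps'}$ with $\eps'\ll\eps$. For this choice of $t$, the edge relaxation estimates of Landon--Yau / Bourgade--Erd\H{o}s--Yau guarantee that the joint law of edge gaps of $H_t$ matches that of GOE up to a multiplicative error $n^{-c}$, while a standard perturbation bound shows that each edge eigenvalue of $H_t$ differs from that of $H_p$ by at most $n^{-1/2+o(1)}$, which is far smaller than $s$. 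Hence the small-gap event is preserved in both directions, and the GOE level repulsion bound transfers to $H_p$ with at most polynomial loss, producing a $\delta>0$ for which \eqref{eq:LevelRepulsion} holds with probability $1-n^{-\delta}$.

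The main obstacle is converting the qualitative edge universality statement into a quantitative $n^{-\delta}$ tail bound. This step is handled by approximating the indicator of the small-gap event by a smooth cutoff $\chi_s$ on scale $s$ and rewriting the resulting statistic as a product of traces of the Green's function evaluated at energies with imaginary part $\eta\ll s$. A Green's function comparison theorem in the edge regime, in the form available in \cite{Knowles2013a}, then bounds the difference of these smoothed traces between $H_p$ and the GOE (or $H_t$) by $n^{-c}$, provided that the entries match in the first two moments (which they do, both having variance $1/n$) and have uniformly bounded higher moments (which holds because $p$ is fixed and the entries are bounded). Combining this comparison with the GOE level repulsion bound from the previous paragraph and Markov's inequality completes the proof.
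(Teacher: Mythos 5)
The first half of your argument (citing \cite{EJP3054} and \cite{Erdos2012b} for \eqref{eq:IsotropicGreenFunction}--\eqref{eq:M_evectorl_infinity}) is exactly what the paper does and is fine. The level repulsion part, however, has two genuine gaps.

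First, your transfer from $H_t=e^{-t/2}H_p+\sqrt{1-e^{-t}}\,G$ back to $H_p$ via a ``standard perturbation bound'' does not work. Even granting an eigenvalue displacement of order $n^{-1/2+o(1)}$, this is far \emph{larger}, not smaller, than the gap scale $s=n^{-2/3-\eps}$ (since $n^{-1/2}\gg n^{-2/3}$), so the small-gap event is not preserved under such a perturbation; and in fact the naive bound is worse, since $\|\sqrt{1-e^{-t}}\,G\|\asymp\sqrt{t}=n^{-1/6-\eps'/2}$. (Also, $t=n^{-1/3-\eps'}$ is \emph{below} the edge relaxation time; one needs $t$ slightly larger than $n^{-1/3}$.) The only viable route here is the one you gesture at in your last paragraph, a Green function comparison — but then the DBM step is doing no work and the whole weight falls on the comparison.

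Second, and more importantly, the moment-matching hypothesis of the Green function comparison is \emph{not} satisfied as you claim: $H_p$ has zero diagonal, while GOE has diagonal variance $2/n$ (and $H_t$ has diagonal variance $\asymp 2t/n$). The comparison theorem of \cite{Knowles2013a} (Proposition \ref{prop:LR} in the paper) requires the variance profiles of the two ensembles to coincide entry-by-entry, and the resulting normalizations of the variance matrix to doubly stochastic form differ between the zero-diagonal and full-diagonal models. Handling exactly this diagonal mismatch is the entire technical content of the paper's appendix: it first proves level repulsion for the zero-diagonal GOE $\t W$ by a bespoke Lindeberg-type replacement of the diagonal in $n^2$ small steps (Lemma \ref{lem:GFCT}, Proposition \ref{prop:SSLforall}), controlled by the strong local semicircle law and a resolvent expansion, and only then applies Proposition \ref{prop:LR} to the pair $(\t W, H_p)$, whose variance profiles now agree. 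Your proposal skips over this step, so as written it does not close.
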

Note that condition \eqref{eq:LevelRepulsion} involving $\theta$ and $k$ can be derived from the second part of this lemma by appropriately adjusting $\eps$.

Conditions  \eqref{eq:IsotropicGreenFunction} and \eqref{eq:M_evectorl_infinity}
were derived in \cite[Theorem 2.1, 2.2]{Erdos2012b} and
Conditions \eqref{eq:M_rigidity} and  \eqref{eq:M_evectorl_infinity} were proved in \cite[Theorem 2.12, 2.16]{EJP3054}.

Condition  \eqref{eq:LevelRepulsion} was proved in  \cite{Knowles2013a}. However, the matrix model is slighly different from ours.
To show that $H_p$ satisfies level repulsion at the edge, we rely
on the fact that GOE satisfies this condition and apply
Green Function Comparison Theorem. This strategy is stated as Proposition
2.4 in \cite{Knowles2013a}:

\begin{prop} \label{KYlevelRepulsion}
\label{prop:LR}Let $H^{v}$ and $H^{w}$ be $n \times n$
symmetric random matrices with independent
entries $h_{ij}^v$ and $h_{ij}^w$ such that the $\EE h_{ij}^v = \EE h_{ij}^w = 0$ and
$\EE (h_{ij}^v)^2=\EE (h_{ij}^w)^2 = \sigma_{ij}^2$.
Assume that $\Sigma = (\sigma_{ij})$ satisfies the following conditions
\begin{enumerate}
\item For $j\in [n]$, $\sum_{i=1}^n \sigma_{ij}^2 = 1$.
\item There exists $\delta_W>0$ such that $1$ is a simple eigenvalue of $\Sigma$ and $\rm{Spec}(\Sigma) \sub [-1+\delta_W, 1-\delta_W] \cup \{1\}$.
\item There is a constant $C_W$, independent of $n$, such that $\max_{ij}\{\sigma_{ij}^2\}\le \frac{C_W}{n}$.
\end{enumerate}
Also, assume that $h_{ij}$ have a uniformly subexponential decay. Namely, there exists a constant $\nu>0$, independent of $n$, such that for any $x\ge1$ and
$1 \le i,j \le n$ we have
\[
	\PP\left( |h_{ij}| > x\sigma_{ij} \right) \le \nu^{-1}\exp(-x^\nu).
\]
Assume that $H^{v}$ satisfies the \emph{Level Repulsion Condition}, i.e. for a sufficiently small $\eps>0$,
 there exists $\delta>0$ such that $H^{v} \in \scr{LR}\left( n, n^{-2/3-\eps}\right)$ with probability greater than
$1-n^{-\delta}$. Then the same holds for $H^w$ with a different $\delta=\delta(\eps)$.

\end{prop}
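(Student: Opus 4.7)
The strategy is the three-step approach of Erd\H{o}s--Schlein--Yau for universality of local spectral statistics: (i) a local semicircle law for both ensembles, (ii) transfer of level repulsion via a short Dyson Brownian motion, and (iii) a Green function comparison to remove the added Gaussian noise. First, I would reformulate the event. By rigidity, $\{H \notin \scr{LR}(n, n^{-2/3-\eps})\}$ is contained in a union, over a polynomial-size net of $E \in [2 - n^{-2/3}\varphi_n^{3\rho}, 2 + n^{-2/3}\varphi_n^{3\rho}]$, of events $\{\scr N_H(E - \eta_*, E + \eta_*) \ge 2\}$ with $\eta_* = n^{-2/3-\eps}/2$, so a union bound reduces the task to a fixed $E$. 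Introduce a smooth bump $\chi_E$ with $\ind_{[E - \eta_*, E + \eta_*]} \le \chi_E \le \ind_{[E - 2\eta_*, E + 2\eta_*]}$ and form
\[
    Y_H(E) := \tfrac{1}{2}\Bigl[\bigl(\textstyle\sum_i \chi_E(\lambda_i(H))\bigr)^2 - \textstyle\sum_i \chi_E(\lambda_i(H))\Bigr],
\]
which majorizes $\ind\{\scr N_H(E - \eta_*, E + \eta_*) \ge 2\}$. Via the Helffer--Sj\"ostrand functional calculus, $Y_H(E)$ equals, up to controllable isotropic local law errors at the edge, a polynomial functional of $m_H(z)$ at a bounded number of points $z = E' + \i\eta_0$ with $\eta_0 \ll \eta_*$. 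Markov's inequality thus reduces the task to bounding $\EE Y_{H^w}(E)$.

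Second, I would compare $\EE Y_{H^w}(E)$ to $\EE Y_{H^v}(E)$ with the aid of a short Dyson Brownian motion. Convolve both $H^v$ and $H^w$ with a $\sqrt{t}$-scaled independent GOE matrix, with $t = n^{-1+c}$, to form Gaussian-divisible ensembles $H^v_t, H^w_t$. Edge DBM universality (Bourgade--Erd\H{o}s--Yau) transfers level repulsion from $H^v_t$ to $H^w_t$ because both ensembles have the same variance profile and matched first two moments, and the local law survives the convolution. It remains to compare $H^w$ with $H^w_t$ via Green function comparison: index the independent entries, interpolate by swapping one at a time, and Taylor expand $Y$ in the size of the swap to fourth order using the resolvent identity. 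The zeroth through second orders cancel in expectation by moment matching; the third and fourth orders are controlled by the uniformly subexponential decay of the entries, the Gaussian divisibility, and the edge local law bound $|(G(z))_{ij}| = O(n^{-1/3 + o(1)})$. A symmetric step compares $H^v$ with $H^v_t$, and the hypothesis on $H^v$ closes the chain, yielding $\EE Y_{H^w}(E) \le n^{-\delta'}$.

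The main obstacle is the Green function comparison with only two matched moments: the classical Erd\H{o}s--Yau--Yin GFT requires four matched moments to absorb the $O(n^2)$ per-swap errors. The Gaussian divisibility obtained after the DBM step effectively supplies the missing moment-matching, permitting the third- and fourth-order Taylor errors to be absorbed via integration by parts against the Gaussian component, as was done for a slightly different matrix model in \cite{Knowles2013a}. Balancing the smoothing scale $\eta_0$, the DBM time $t$, and the resolution $\eta_*$ so that (a) $Y$ faithfully majorizes the counting function, (b) the DBM flow preserves level repulsion on the relevant scale, and (c) the total GFT error is $o(n^{-\delta})$, is the technical heart of the argument, and transfers verbatim to the present hypotheses since the entries of $H_p$ satisfy the same quantitative assumptions of uniformly subexponential decay and variance $\sigma_{ij}^2 = O(1/n)$.
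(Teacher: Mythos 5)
The paper does not actually prove this proposition: it is quoted verbatim as Proposition 2.4 of \cite{Knowles2013a} and used as a black box (with $H^{v}=\t W$ and $H^{w}=H_{p}$), so the only in-paper material to compare against is the closely analogous comparison of $W$ with $\t W$ carried out in the appendix (Lemma \ref{lem:GFCT} and Proposition \ref{prop:LR GOEtoWigner}). Your first step --- reducing by rigidity and a union bound over a polynomial net of energies to bounding the expectation of a smoothed counting functional $Y_{H}(E)$ expressed through the imaginary part of the Stieltjes transform at scale $\eta_{0}\ll n^{-2/3-\eps}$ --- is exactly the mechanism of \cite{Knowles2013a} and of the paper's appendix, and is correct.

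The problem is your second step. The premise driving the detour through Dyson Brownian motion --- that the Green function comparison theorem ``requires four matched moments'' --- holds in the bulk but is false at the edge, which is the only regime relevant here. At the edge the local law gives $|G_{ij}(z)-\delta_{ij}|=O(n^{-1/3+C\eps})$, so the third-order Taylor term in a single Lindeberg swap is $O(\EE|h_{ij}|^{3}\cdot n^{-2/3+C\eps})=O(n^{-3/2}\cdot n^{-2/3+C\eps})$, and summing over the $O(n^{2})$ swaps yields a total error $O(n^{-1/6+C\eps})$, already $o(n^{-\delta})$: two matched moments suffice, and the DBM step is superfluous. Moreover, as written that step contains a genuine error: you compare $H^{w}$ with $H^{w}_{t}=H^{w}+\sqrt{t}\,(\mathrm{GOE})$ and assert that ``the zeroth through second orders cancel in expectation by moment matching,'' but the second moments of $H^{w}$ and $H^{w}_{t}$ differ by $t/n$, so they do not cancel; one must either construct a moment-matched initial ensemble for the flow (the standard three-step device, which you do not describe) or show the mismatch $t/n=n^{-2+c}$ is absorbable, as the paper's appendix does for the analogous diagonal-variance mismatch. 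Finally, once edge DBM universality is invoked for $H^{w}_{t}$ the hypothesis on $H^{v}$ plays no role, so your chain through $H^{v}$ is redundant. The direct two-moment edge Green function comparison, as in \cite{Knowles2013a} and in the paper's own appendix, gives the statement without any of this machinery.
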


The level repulsion condition has been proved for the GOE ensemble, see, e.g. \cite{anderson_guionnet_zeitouni_2009}.
By GOE we mean that a $n\times n$ symmetric random matrix $W$ with independent centered gaussian entries (up to symmetry) where
the off-diagonal entries have variance $1/n$ and the diagonal entries have variance $2/n$.
We would like to apply Proposition \ref{KYlevelRepulsion} with  $H^{v}=W$ and $H^{w}=
H_{p}$. The first two moments of the off-diagonal entries of these two ensembles are the same.
The variances of the diagonal entries differ, but since there are only $n$ of them, it will be possible to show that they do not affect the level repulsion significantly.

We proceed in two steps. First, we prove the level repulsion condition for a $n\times n$ matrix $\t W$ whose off diagonal entries are the same as for $W$ and the diagonal entries are $0$.
Then, we apply Proposition  \ref{KYlevelRepulsion} to $H^v=\t W$ and $H^w= H_p$.

Thus, it is sufficient to prove
\begin{prop}
\label{prop:LR GOEtoWigner}The level repulsion estimates hold for $\t W$.
\end{prop}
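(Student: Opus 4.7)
The plan is to deduce level repulsion for $\tilde{W}$ from the known level repulsion for the standard GOE ensemble via a Green function comparison argument. Let $W$ denote a standard $n\times n$ GOE matrix: its off-diagonal entries have the same $N(0,1/n)$ distribution as those of $\tilde{W}$, while its diagonal entries are independent $N(0,2/n)$, in contrast with the deterministically zero diagonal of $\tilde{W}$. Since GOE satisfies the level repulsion estimate at the edge with probability at least $1-n^{-\delta_{0}}$ for some $\delta_{0}>0$ (see \cite{anderson_guionnet_zeitouni_2009}), the task reduces to showing that changing the diagonal distribution from $N(0,2/n)$ to a delta mass at $0$ changes the level repulsion probability by at most $n^{-\delta}$ for some positive $\delta<\delta_{0}$.

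First, I would encode the failure of level repulsion by a smooth spectral functional. With $\eta=n^{-2/3-2\eps_{LR}}$, pick a smooth bump $\chi$ supported in $[-1,1]$ and equal to $1$ on $[-1/2,1/2]$, and a smooth window $\psi$ supported in a slight enlargement of $[2-n^{-2/3}\varphi_n^{3\rho},2+n^{-2/3}\varphi_n^{3\rho}]$, and set
\[
\Phi(H)=\sum_{i\neq j}\chi\!\left(n^{2/3+\eps_{LR}}(\mu_{i}(H)-\mu_{j}(H))\right)\psi(\mu_{i}(H))\,\psi(\mu_{j}(H)).
\]
Since $\{H\notin\scr{LR}(n,n^{-2/3-\eps_{LR}})\}\subset\{\Phi(H)\ge2\}$, Markov's inequality reduces the problem to bounding $\E\Phi(H)$. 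By the Helffer--Sjöstrand formula, $\E\Phi(H)$ can be written as a contour integral of products of traces of the Green function $G(z)=(H-z)^{-1}$ at spectral parameters $E+i\eta$, with $E$ near $2$; the necessary rigidity and isotropic local semicircular law at the edge hold for both $\tilde{W}$ and $W$ by \cite{EJP3054,Erdos2012b}.

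Second, I would interpolate between $\tilde{W}$ and $W$ by a Lindeberg-type swap on the diagonal entries, replacing them one at a time from $N(0,2/n)$ to $0$. Taylor-expanding $\Phi$ in a single diagonal entry $h_{ii}$, the first-order term vanishes by the symmetry of the Gaussian, and the second-order term is multiplied by the variance gap $2/n$. By differentiating through the Helffer--Sjöstrand representation, the second derivative of $\Phi$ in $h_{ii}$ reduces to products of at most four Green function entries $G_{ii}(E+i\eta)$, which the isotropic local law bounds by $n^{C\eps_{LR}}$ with overwhelming probability; higher-order remainders are negligible since $|h_{ii}|=O(n^{-1/2+o(1)})$ with overwhelming probability. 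Summing the $n$ swaps contributes a total error $n\cdot(2/n)\cdot n^{C\eps_{LR}}=n^{C\eps_{LR}}$ to $\E\Phi$, and combined with the GOE estimate this yields level repulsion for $\tilde{W}$ with probability at least $1-n^{-\delta}$ provided $\eps_{LR}$ is taken sufficiently small.

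The main obstacle is quantifying the second-derivative bound $|\partial_{h_{ii}}^{2}\Phi|\le n^{C\eps_{LR}}$: after the Helffer--Sjöstrand representation, this requires controlling products of four Green function entries at spectral parameters whose imaginary part $\eta=n^{-2/3-2\eps_{LR}}$ sits exactly at the edge scale where the isotropic local law becomes fragile. Aligning the width of $\chi$ (which sets the admissible repulsion scale) with the scale of this local law is what constrains $\eps_{LR}$ to be small, and correspondingly forces $\delta<\delta_{0}$; this is the standard quantitative calibration in edge-type Green function comparison arguments and is the only genuinely delicate point in the proof.
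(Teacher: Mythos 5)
Your overall strategy --- transferring level repulsion from the GOE ensemble to $\t W$ by a Lindeberg swap of the diagonal entries combined with a Green function comparison --- is the same as the paper's, but two of your steps have genuine gaps.

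The first is quantitative and fatal as written. You need the comparison to give $\EE\Phi(\t W)\le \EE\Phi(W)+n^{-\delta}$, since $\Phi\ge 2$ is exactly the failure of level repulsion; a total swap error of $n\cdot(2/n)\cdot n^{C\eps_{LR}}=n^{C\eps_{LR}}\ge 1$ is therefore vacuous. Moreover the input bound $|\partial_{h_{ii}}^{2}\Phi|\le n^{C\eps_{LR}}$ is not available: differentiating a resolvent trace gives $\partial_{h_{ii}}\Tr G(z)=-(G^{2}(z))_{ii}=-\sum_{j}G_{ij}G_{ji}$, which at $\eta=n^{-2/3-2\eps_{LR}}$ is of size $\Im{G_{ii}}/\eta\gtrsim n^{1/3}$, not $n^{C\eps_{LR}}$. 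The smallness in the paper's substitute comparison (Lemma \ref{lem:GFCT}) comes from mechanisms your accounting omits: the functional compared is $F\bigl(n\int_{E_{1}}^{E_{2}}\Im{m(y+\i\eta)}\d y\bigr)$, so the $G^{2}$ produced by differentiation is integrated in energy and collapses to a difference $G_{ii}(E_{2}+\i\eta)-G_{ii}(E_{1}+\i\eta)=O(n^{-1/3+C\eps})$ by the entrywise local law, yielding a per-step error $O(n^{-7/3+C\eps})$ that survives summation over all interpolation steps. Without this gain, summing over the diagonal swaps produces nothing that is $o(1)$, let alone $o(n^{-\delta})$.

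The second gap is the encoding of the bad event. You assert that $\Phi(H)=\sum_{i\ne j}\chi\bigl(n^{2/3+\eps_{LR}}(\mu_{i}-\mu_{j})\bigr)\psi(\mu_{i})\psi(\mu_{j})$ "can be written as a contour integral of products of traces of the Green function," but $\sum_{i\ne j}g(\mu_{i},\mu_{j})$ factors into products of traces only when $g$ does, and the kernel $\chi\bigl(n^{2/3+\eps_{LR}}(x-y)\bigr)$ varies on the scale $n^{-2/3-\eps_{LR}}$ --- precisely the scale one is trying to resolve --- so any quasi-analytic representation of it forces spectral parameters with imaginary part below that scale, where no local law holds. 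The paper sidesteps this entirely: it compares only the smoothed counting function and quotes the Knowles--Yin reduction (their Lemma 2.7) from level repulsion to the Green function comparison. Finally, the local law you invoke for the interpolating matrices is not off the shelf, since a one-entry-at-a-time diagonal swap produces variance profiles that are neither GOE's nor $\t W$'s; the paper restructures the interpolation into $n^{2}$ infinitesimal steps precisely so that every intermediate matrix is, after a $1+O(n^{-1})$ rescaling, covered by the strong local semicircle law, and proves the required uniform entrywise bound separately in Proposition \ref{prop:SSLforall}.
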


The proof of this proposition is standard and is included it for the reader's convenience.
It follows the proof of  \ref{prop:LR} which relies
on Lemma 2.6 (Green Function Comparison Theorem) and Lemma 2.7 in \cite{Knowles2013a}.

Since the second moments of the diagonal entries of $W$ and $\t W$ differ, we need a substitute for
Green Function Comparison Theorem. The rest of the proof will be exactly the same
as of Proposition \ref{prop:LR}.

Before stating the result precisely, we will sketch the idea behind the comparison.
Consider the Stieltjes Transform of a symmetric matrix $H$ is $m\left(z\right)=\frac{1}{n}\Tr\left(\frac{1}{H-z}\right)$.
Suppose $\lambda_{1},\dots,\,\lambda_{n}$ are eigenvalues of $H$.
Then,
\[
\frac{n}{\pi}\Im{m\left(E+\i\eta\right)}=\sum_{i\in\left[n\right]}\frac{1}{\pi}\frac{\eta}{\left(\lambda_{i}-E\right)^{2}+\eta^{2}}.
\]
If we choose $\eta$ to be sufficiently small, then each summand is
an approximation of the delta function at each eigenvalue.
On one hand, this provides a way to estimate number of eigenvalues
in an interval. Taking $\eta$ to be sufficiently small, we should
have
\[
 \sum_{\a}^n \ind_{(a,b)}(\lambda_\a) \simeq n \int_{a}^{b}\frac{1}{\pi}\Im{m\left(E+\i\eta\right)}\d E.
\]
On the other hand, $\Im{m\left(E+\i\eta\right)}$ can be expressed
in terms of the Green Function $G\left(z\right):=\frac{1}{H-z}.$
\[
\Im{m\left(E+\i\eta\right)=\frac{1}{n}\sum_{i}\Im{G_{ii}\left(E+\i\eta\right)}}.
\]
We will use Lindeberg's method to replace the diagonal entries of $W$ by those of $\t W$ one by one
and estimate the expectation of the difference of Green functions.

Now we state the substitute for Lemma 2.6 in \cite{Knowles2013a}:
\begin{lem}
\label{lem:GFCT}(Green Function Comparison Theorem) Let $F:\R\mapsto\R$
be a bounded smooth function whose first and second derivatives are  bounded as well.
There exists a constant $\eps_{0}>0$ and for
such $\eps<\eps_{0}$ and for any real numbers $E_{1},\,E_{2}\in\left[2-n^{-2/3+\eps},2+n^{2/3+\eps}\right]$,
setting $\eta=n^{-2/3-\eps}$ we have
\[
\left|\left(\EE^{W}-\EE^{\t W}\right)F\left(n\int_{E_{1}}^{E_{2}}\Im{m\left(y+i\eta\right)}\d y\right)\right|\le cn^{-1/3+c\eps}.
\]
\end{lem}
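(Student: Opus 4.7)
The plan is to invoke Lindeberg's replacement strategy, swapping the diagonal entries of $W$ by zeros one at a time. Define $\hat H^{(k)}$ to be the symmetric matrix agreeing with $W$ except that its first $k$ diagonal entries are replaced by $0$, so that $\hat H^{(0)} = W$ and $\hat H^{(n)} = \t W$. Writing $X^{H} := n\int_{E_1}^{E_2} \Im m^H(y + i\eta)\,dy = \int_{E_1}^{E_2} \Im \Tr G^H(y + i\eta)\,dy$, we have the telescoping identity
\[
\EE^W F(X^W) - \EE^{\t W} F(X^{\t W}) = \sum_{k=1}^n \EE\big[F(X^{\hat H^{(k-1)}}) - F(X^{\hat H^{(k)}})\big],
\]
and it suffices to show that each summand is $O(n^{-4/3 + c\eps})$.

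For a single swap, observe that $\hat H^{(k-1)} = \hat H^{(k)} + h_{kk} e_k e_k^\top$ with $h_{kk} \sim N(0, 2/n)$ independent of $\hat H^{(k)}$. Setting $G(z) := (\hat H^{(k)} - z)^{-1}$ and applying the Sherman--Morrison formula,
\[
\Tr\big((\hat H^{(k-1)} - z)^{-1} - G(z)\big) = -\frac{h_{kk}(G^2)_{kk}(z)}{1 + h_{kk} G_{kk}(z)} = \sum_{m \ge 1} (-h_{kk})^{m} G_{kk}(z)^{m-1} (G^2)_{kk}(z) ,
\]
a geometric series that converges on the overwhelming-probability event $\{|h_{kk}| \le C\sqrt{\log n/n}\} \cap \{|G_{kk}(y+i\eta)| \le C\}$, the latter bound following from the isotropic local law \eqref{eq:IsotropicGreenFunction} applied to $\hat H^{(k)}$. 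The central algebraic input is the identity $(G^2)_{kk}(z) = \tfrac{d}{dz} G_{kk}(z)$: integrating in $y \in [E_1,E_2]$ collapses each spectral integral into a boundary term,
\[
A_k := \int_{E_1}^{E_2} \Im (G^2)_{kk}(y+i\eta)\,dy = \big[\Im G_{kk}\big]_{E_1}^{E_2}, \qquad B_k := \int_{E_1}^{E_2} \Im\big[G_{kk}(G^2)_{kk}\big](y+i\eta)\,dy = \tfrac12 \big[\Im G_{kk}^2\big]_{E_1}^{E_2},
\]
and the isotropic local law combined with $\Im m_{sc}(y+i\eta) = O(n^{-1/3+c\eps})$ for $|y-2|\le n^{-2/3+\eps}$ yields $|A_k|, |B_k| = O(n^{-1/3+c\eps})$ on the same event. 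Thus $\Delta X := X^{\hat H^{(k-1)}} - X^{\hat H^{(k)}} = -h_{kk} A_k + h_{kk}^2 B_k + (\text{tail})$, where the tail is controlled by the Gaussian decay of $h_{kk}$ and the bound $|(G^2)_{kk}| \le \eta^{-1}\Im G_{kk} = O(n^{1/3+c\eps})$.

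Finally, Taylor-expanding $F$ to second order and taking expectation over $h_{kk}$ (independent of $\hat H^{(k)}$, with vanishing odd Gaussian moments), the $h_{kk}$-linear piece in $\Delta X$ cancels, and
\[
\EE\big[F(X^{\hat H^{(k-1)}}) - F(X^{\hat H^{(k)}})\big] = \tfrac{2}{n}\,\EE\big[F'(X^{\hat H^{(k)}}) B_k + \tfrac12 F''(X^{\hat H^{(k)}}) A_k^2\big] + O(n^{-2}).
\]
Using the hypothesis that $\|F'\|_\infty, \|F''\|_\infty < \infty$ together with $A_k^2, |B_k| = O(n^{-1/3+c\eps})$, each telescoping summand is $O(n^{-4/3+c\eps})$, and summation over $k = 1,\dots,n$ yields the claimed bound. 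The main technical obstacle is that the spectral bounds and resolvent convergence must hold uniformly in $k \in [n]$ and $y \in [E_1,E_2]$; this is handled by applying the isotropic local law to each of the $n$ intermediate Wigner-type matrices $\hat H^{(k)}$ (covered by \cite{EJP3054}) and taking a union bound, whose polynomially small failure probability $n^{-D}$ is easily absorbed. A secondary point is the need for a uniform-in-$y$ version of the pointwise local law along the small segment $[E_1,E_2]$, which is standard via a grid argument combined with the Lipschitz property of $G$ in $y$ at scale $\eta$.
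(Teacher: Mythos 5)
Your overall strategy --- Lindeberg replacement along the diagonal, a resolvent expansion in the swapped entry, a second-order Taylor expansion of $F$, and cancellation of the linear term via the vanishing odd Gaussian moments --- is the same as the paper's, and your integration-by-parts observation $(G^2)_{kk}=\partial_z G_{kk}$, which collapses the $y$-integrals into boundary values of $\Im{G_{kk}}$, is a clean way to extract the per-step factor $n^{-1/3+c\eps}$ (the paper instead bounds the integrand entrywise and uses $|E_2-E_1|\le 2n^{-2/3+\eps}$). The order counting in your final display is correct.

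The genuine gap is the closing claim that the needed local law holds for each intermediate matrix $\hat H^{(k)}$ because it is ``covered by \cite{EJP3054}.'' For $0<k<n$ the matrix $\hat H^{(k)}$ has $k$ rows whose variances sum to $1-1/n$ and $n-k$ rows whose variances sum to $1+1/n$; no single rescaling turns this into the doubly stochastic variance profile required by the generalized Wigner hypotheses of \cite{EJP3054}, \cite{Erdos2012b} (condition (1) of Proposition \ref{prop:LR}). Nor can you transfer the law to $\hat H^{(k)}$ from $W$ or $\t W$ by perturbation: $\hat H^{(k)}$ differs from $\t W$ by a random diagonal matrix of operator norm of order $\sqrt{\log n/n}$, which vastly exceeds $\eta=n^{-2/3-\eps}$, so a deterministic perturbation argument destroys all edge-scale information. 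This is precisely the obstacle the paper flags, and it is the reason the paper replaces your $n$-step interpolation by the $n^{2}$-step family $W_{\b,\gamma}$: the matrices $W_{\b,0}$ have homogeneous diagonal variance $2\b/n^{2}$, hence a variance profile that becomes doubly stochastic after a uniform $1+O(n^{-1})$ rescaling (so Theorem \ref{thm: StrongSemiCircle} applies to them), and the entrywise Green-function bounds are then propagated to the in-between matrices $W_{\b,\gamma}$ by the induction of Proposition \ref{prop:SSLforall}, each sub-step being a diagonal perturbation of size only $O(\varphi_n/n)\ll\eta$. To complete your argument you must either prove a local law accommodating the $O(1/n)$ inhomogeneity of the variance profile (not among the results quoted in the paper) or adopt a finer interpolation of this kind; as written, the uniform control of $G_{kk}$ and $\Im{G_{kk}}$ on which every step of your estimate rests is unjustified.
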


Lindeberg's method is based on replacing the entries one by one.
Yet, our proof uses the strong local semicircle law, see Theorem \ref{thm: StrongSemiCircle} below.
Application of this law requires scaling of the matrix so that the variance matrix will be doubly stochastic.
However, replacing diagonal entries of $W$ by $0$ appearing in $\t W$ results in two  essentially different
scalings of the variance matrix to the doubly stochastic form. To deal with this obstacle, we perform replacement
in smaller steps which will require $n^2$ steps instead of $n$.

Define $n^2$ symmetric random matrices
$\left\{ W_{\b,\,\gamma}\right\} _{\b,\,\gamma=0}^{n}$ whose off-diagonal entries
are the same as of $W$ and $\t W$. Let  $\left\{ h_{i,j}\right\} _{i,j=1}^{n}$
be i.i.d $N\left(0,\frac{2}{n^{2}}\right)$ random variables. The diagonal entries of $W_{\b,0}$ are
\[
  \left(W_{\b,0}\right)_{i,i}=\sum_{j=1}^{\b}h_{j,i}.
\]
In particular, the diagonal entries of $W_{\b,0}$ are centered gaussian
  variables with variance $\frac{2\b}{n^2}$. Thus, the variance matrix of
  $W_{\b,0}$ is doubly stochastic
if we scale it by a factor $1+O(n^{-1})$.
Furthermore, $W_{0,0}=\t W$ and $W_{n,0}=W$.

Now we define the diagonal entries of $W_{\b,\gamma}$:
\[
  \left(W_{\b,\gamma}\right)_{ii} =
    \begin{cases}
        \sum_{j=1}^\b h_{j,i} & \text{if $i>\gamma$},\\
        \sum_{j=1}^\b h_{j,i} + h_{\b+1,i} & \text{if $i\le\gamma$}.
    \end{cases}
\]
In other words, we have
\[
W_{\b,\gamma+1}=W_{\b,\gamma}+h_{\b+1,\gamma+1}e_{\gamma+1}e_{\gamma+1}^{\top}
\]
and
\[
  W_{\b,n}=W_{\b+1,0}.
\]

Our goal is to show that
\[
\left|
  \left(\EE^{W_{\b,\gamma}}-\EE^{W_{\b,\gamma+1}}\right)
  F\left(
     n\int_{E_{1}}^{E_{2}}\Im{m\left(y+i\eta\right)}\d y
  \right)
\right|
\le n^{-2}n^{-1/3+c\eps}
\]
for each $k =0,\dots, n-1$ and $\gamma=0,\dots, n-1$.
Then the statement of the theorem will follow immediately.
Before we move on to the proof, we need the following proposition.
\begin{prop}
\label{prop:SSLforall} Fix a sufficiently small $\eps>0$.
Let $\scr I:=\left\{ E+\i\eta\,:\,\left|E-2\right|<n^{-2/3+\eps}\right\} $
and $\eta=n^{-2/3-2\eps}$.Then, for any $D>0$, if $n$ is sufficiently
large, we have
\[
\PP\left(\max_{\b,\gamma}\sup_{z\in\scr I}\left|\left(G_{\b,\gamma}\left(z\right)\right)_{ij}-\delta_{ij}\right|>n^{-1/3+4\eps}\right)<n^{-D}
\]

where $G_{\b,\gamma}\left(z\right)=\frac{1}{W_{\b,\gamma}-z}$ is the
Green function of $W_{\b,\,\gamma}$.
\end{prop}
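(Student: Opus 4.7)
The plan is to invoke the isotropic local semicircular law for each matrix $W_{\b,\gamma}$ individually and then upgrade the resulting pointwise bound to a uniform estimate over $(\b,\gamma)$ and over $z \in \scr I$ by a union bound combined with a net argument in $z$.

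First I would verify that every $W_{\b,\gamma}$ falls within the scope of a generalized Wigner isotropic local law such as \cite[Theorem 2.12]{EJP3054}. The off-diagonal entries are the same for all $W_{\b,\gamma}$: independent, centered, of variance $1/n$, and uniformly subexponential. The diagonal entries are independent centered Gaussians with variance at most $2(n+1)/n^{2} = O(1/n)$, also uniformly subexponential. Hence the variance matrix $\Sigma_{\b,\gamma}$ has row sums $1 + O(1/n)$, and rescaling $W_{\b,\gamma}$ by $1 + O(1/n)$ makes $\Sigma_{\b,\gamma}$ doubly stochastic while perturbing $G_{\b,\gamma}(z)$ by only $O(1/n)$ uniformly on $\scr I$, which is far below the target accuracy $n^{-1/3+4\eps}$. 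With this rescaling in place, the hypotheses of the isotropic local semicircular law hold with constants independent of $\b$ and $\gamma$, giving for each fixed triple $(\b,\gamma,z)$ and any prescribed $D'>0$ the bound
\[
\PP\left(\left|(G_{\b,\gamma}(z))_{ij} - m_{sc}(z)\delta_{ij}\right| > n^{-1/3+3\eps}\right) \le n^{-D'},
\]
for $n$ sufficiently large. Since $|m_{sc}(z) - m_{sc}(2)| = O(n^{-1/3+\eps})$ on $\scr I$ and $|m_{sc}(2)|=1$, this matches the target $\delta_{ij}$ in the proposition up to an error of order $n^{-1/3+\eps}$, which is absorbed into the $4\eps$ slack.

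To promote this pointwise bound to a uniform one I would discretize with a net $\scr N \subset \scr I$ of mesh $n^{-10}$, so that $|\scr N| \le n^{10}$, and apply a union bound over the at most $n^{12}$ triples $(\b,\gamma,z)$ with $\b,\gamma \in [n]$ and $z \in \scr N$, choosing $D' := D + 15$. To pass from $\scr N$ to all of $\scr I$ I would use the resolvent identity $G(z) - G(z') = (z-z')\, G(z)\, G(z')$ together with $\norm{G_{\b,\gamma}(z)} \le \eta^{-1} = n^{2/3+2\eps}$: for $|z - z'| \le n^{-10}$ the resulting fluctuation of any entry is at most $n^{-10 + 4/3 + 4\eps}$, dwarfed by $n^{-1/3 + 4\eps}$.

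The only subtle step is ensuring that the implicit constants in the isotropic local law can indeed be chosen uniformly in $\b$ and $\gamma$. Because the off-diagonal part is common to all $W_{\b,\gamma}$ and the diagonal perturbation is small in both variance and subexponential norm, the standard proof of the isotropic local law is insensitive to this choice, so uniformity of the constants follows from inspection of the proof of \cite[Theorem 2.12]{EJP3054}. Once this uniformity is granted, the net-and-union-bound argument above is routine.
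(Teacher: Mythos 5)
Your overall architecture (a local law for each interpolating matrix, then a union bound and a net in $z$) has the right shape, but the central claim --- that every $W_{\b,\gamma}$ satisfies the hypotheses of the isotropic local law after a single scalar rescaling --- is false for $0<\gamma<n$, and this is precisely the difficulty the $n^{2}$-step construction was designed to address. For such $\gamma$ the diagonal variances are not all equal: rows $i\le\gamma$ carry diagonal variance $2(\b+1)/n^{2}$ while rows $i>\gamma$ carry $2\b/n^{2}$, so the row sums of the variance matrix differ and no scalar rescaling can make it doubly stochastic, which is an exact structural hypothesis of \cite[Theorem 2.12]{EJP3054} (the condition $\sum_{i}\sigma_{ij}^{2}=1$ for every $j$). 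Deferring to ``inspection of the proof'' of the cited theorem is not a substitute; the paper explicitly flags this very point when motivating the construction. Nor can you recover the bound by perturbing off a genuine generalized Wigner matrix with the crude estimate $\norm{G-G'}\le\norm{G}\,\norm{\Delta}\,\norm{G'}$: with $\norm{\Delta}\le\varphi_n/n$ and $\norm{G},\norm{G'}\le\eta^{-1}=n^{2/3+2\eps}$ this yields only $n^{1/3+4\eps}\varphi_n$, which is useless at the required scale.

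The paper's proof therefore applies Theorem \ref{thm: StrongSemiCircle} only to the matrices $W_{\b,0}$, whose diagonal variances are constant (equal to $2\b/n^{2}$), so that a harmless $1+O(n^{-1})$ rescaling does produce a doubly stochastic variance matrix; it then propagates the entrywise bound to $W_{\b,\gamma}$ for $\gamma=1,\dots,n$ by a deterministic induction. Writing $W_{\b,\gamma}=W_{\b,\gamma-1}+h\,e_{\gamma}e_{\gamma}^{\top}$ with $|h|\le\varphi_n/n$, one expands the resolvent to fifth order and uses the inductive smallness of $R_{i\gamma}$ for $i\ne\gamma$ to show that each step degrades the bound only by a factor $1+\frac{1}{\varphi_n n}$ (plus a bookkeeping increment of the prefactor when $\gamma\in\{i,j\}$), so the cumulative loss over $n$ steps is $O(1)$. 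To repair your argument you would need to supply this entrywise perturbation induction (or an equivalent); the union bound over $(\b,\gamma)$ and the net in $z$ are fine once that is in place.
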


Let's recall a theorem in \cite[Theorem 2.1]{Erdos2012b}.

\begin{thm}
\label{thm: StrongSemiCircle}(Strong local semicircular law) Suppose that
$H$ satisfies the assumption of Proposition \ref{KYlevelRepulsion}.
Then, for every $s,\,D>0$ and $0<\eps<1/3$, we have
\begin{equation}
\PP\left(\sup_{\left|E-2\right|\le n^{-2/3+\eps}}\max_{i,j\in\left[n\right]}\left|\left(G\left(E+\i\eta\right)\right)_{ij}-1\right|<4n^{-\frac{1}{3}+s+\eps}\right)\ge1-n^{-D}\label{eq:isotropicdelocalization}
\end{equation}
where $\eta=n^{-2/3-\eps}$ and $n\ge n\left(s,\,D, \eps \right)$.
\end{thm}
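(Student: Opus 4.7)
The plan is to follow the by-now standard proof of the local semicircle law up to the spectral edge, pioneered by Erd\H{o}s, Schlein, Yau and Yin, adapted to the Wigner-type ensemble satisfying the three covariance conditions on $\Sigma$ listed in Proposition \ref{KYlevelRepulsion}. The object I would control is the quantity $\Lambda(z):=\max_{i,j}\bigl|G_{ij}(z)-m_{sc}(z)\delta_{ij}\bigr|$; the stated conclusion is the special case where $E$ is so close to $2$ that $m_{sc}(E+\mathrm{i}\eta)=-1+o(1)$, so it suffices to show $\Lambda(z)\le n^{-1/3+s+\eps}$ on the prescribed edge domain.

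The first step is to derive the approximate self-consistent equation via the Schur complement formula. Let $H^{(i)}$ denote $H$ with row and column $i$ removed, and $G^{(i)}(z)=(H^{(i)}-z)^{-1}$. Then
\begin{equation*}
\frac{1}{G_{ii}(z)}=-z+h_{ii}-\sum_{k,l\ne i}h_{ik}G^{(i)}_{kl}(z)h_{il},\qquad G_{ij}(z)=-G_{ii}(z)\sum_{k\ne i}h_{ik}G^{(i)}_{kj}(z)\quad (i\ne j).
\end{equation*}
Using the uniform subexponential decay of the entries together with the Hanson–Wright inequality (Theorem \ref{thm:Hanson-Wright}), the quadratic form concentrates around its conditional expectation $\sum_{k\ne i}\sigma_{ik}^2 G^{(i)}_{kk}(z)$ with fluctuation controlled by $\sqrt{n^{-2}\operatorname{Tr} |G^{(i)}|^2}\lesssim\sqrt{\Im m / (n\eta)}$ (via the Ward identity $\sum_j|G^{(i)}_{ij}|^2=\Im G^{(i)}_{ii}/\eta$). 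The minor–to–full Green function identity $G^{(i)}_{kk}=G_{kk}-G_{ki}G_{ik}/G_{ii}$ lets me rewrite the relation as a perturbed equation for the vector $(G_{ii})_{i=1}^n$ in terms of $\Sigma$.

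The second step is the stability analysis of the self-consistent equation. The doubly stochastic structure $\sum_i \sigma_{ij}^2 = 1$ makes the constant vector $(m_{sc},\ldots,m_{sc})$ an exact solution of the unperturbed equation, while the gap condition $\operatorname{Spec}(\Sigma)\subset[-1+\delta_W,1-\delta_W]\cup\{1\}$ provides an invertibility estimate for the linearization in directions orthogonal to the Perron vector. The component parallel to the Perron vector, i.e.\ the normalized trace $m(z)=n^{-1}\sum_i G_{ii}(z)$, solves a scalar perturbed equation whose stability near the edge $E\approx 2$ degrades because $m_{sc}'$ blows up like $\eta^{-1/2}$; the standard resolution (see \cite{EJP3054,Erdos2012b}) is to use the refined bound $|m(z)-m_{sc}(z)|\lesssim (n\eta)^{-1}$ coming from fluctuation averaging, which beats the naive $\sqrt{\Im m_{sc}/(n\eta)}$ near $E=\pm 2$.

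The third step is the bootstrap in $\eta$. I would start at $\eta_0=n^{-1/3+\eps}$, where $\Lambda(z)$ is shown to be small by a direct moment computation, and then run a continuity argument decreasing $\eta$ in tiny steps down to $n^{-2/3-\eps}$: at each scale, the concentration bounds plus the stability estimate upgrade an a priori bound $\Lambda\le n^{-\mathrm{something}}$ to the sharp bound $\Lambda\le n^{-1/3+s+\eps}$. Uniformity in $E$ is obtained via a polynomial-size net on $[2-n^{-2/3+\eps},2+n^{-2/3+\eps}]$ combined with the trivial Lipschitz estimate $\|\partial_z G(z)\|\le\eta^{-2}$. The main obstacle is, as always, the fluctuation averaging near the edge: the naive entrywise Hanson–Wright bound yields only $\Lambda\lesssim (n\eta)^{-1/2}$, which is useless for $\eta\ll n^{-2/3}$, so one must exploit cancellations between the random error terms $\Upsilon_i$ across different $i$, as in \cite[Sections 4--5]{Erdos2012b}, to promote this to the isotropic $(n\eta)^{-1}$-type bound required at the spectral edge.
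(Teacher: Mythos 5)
This theorem is not proved in the paper at all: it is quoted verbatim as \cite[Theorem 2.1]{Erdos2012b}, so there is no internal argument to compare against. Your outline — Schur complement, self-consistent equation stabilized by the spectral gap of $\Sigma$, fluctuation averaging to get the $(n\eta)^{-1}$ bound on $m-m_{sc}$ near the edge, and a continuity/bootstrap in $\eta$ down to $n^{-2/3-\eps}$ with a net in $E$ — is precisely the strategy of that reference, and the claimed rate $n^{-1/3+s+\eps}$ is what the error term $\sqrt{\Im m_{sc}/(n\eta)}+(n\eta)^{-1}$ yields at $\eta=n^{-2/3-\eps}$, so the plan is sound (your aside that the entrywise bound is ``useless'' for $\eta\ll n^{-2/3}$ overstates things slightly: at this $\eta$ it still gives $o(1)$, just not the sharp $n^{-1/3}$ rate, which is why the $\Im m_{sc}$-dependent form of the error is the real point at the edge).
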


This theorem implies that $\max_{\b}\sup_{z\in\scr I}\left|\left(G_{\b,0}\left(z\right)\right)_{ij}-\delta_{ij}\right|\le4n^{-\frac{1}{3}+3\eps}$
with probability at least $1-n^{-D}$. We extend this
properties to $W_{\b,\gamma}$ by comparison.
\begin{proof}[Proof of Proposition \ref{prop:SSLforall}]
Fix $\b$. Fix a sample of $W_{\b,\,0}$ such that
\[
\sup_{\left|E-2\right|\le n^{-2/3+\eps}}\max_{i,j\in\left[n\right]}\left|\left(G\left(E+\i\eta\right)\right)_{ij}-1\right|<4n^{-\frac{1}{3}+3\eps}
\]
 for $\left|E-2\right|\le n^{-2/3+\eps}$ and the samples of
$\left\{ h_{\b+1,\gamma}\right\} _{\gamma=1}^{n}$ such that $\max_{\gamma}\left|h_{\b+1,\,\gamma}\right|\le\frac{\varphi_{n}}{n}$
where $\varphi_{n}=\left(\log n\right)^{\log\log n}$. Notice that
both conditions hold with probability at least $1-n^{-D}$.

Define $s_{0}=4n^{-\frac{1}{3}+3\eps}$ and
$s_{\gamma+1}=s_{\gamma}\left(1+\frac{1}{\varphi_{n}n}\right)$.
We claim that
\begin{equation}
\left|\left(G_{\b,\gamma}\left(E+\i\eta\right)\right)_{i,j}-\delta_{ij}\right|\le\text{\ensuremath{\phi\left(i,j,\gamma\right)}}s_{\gamma}\label{eq:Ggamma}
\end{equation}
where
\[
\phi\left(i,j,\gamma\right):=1+\ind_{i\ge\gamma}+\ind_{j\ge\gamma}.
\]
If it is true, then we have
\[
  \max_{\b,\gamma}\sup_{z\in\scr I}
    \left|
      \left(G_{\b,\gamma}\left(E+\i\eta\right)\right)_{ij}-\delta_{ij}
    \right|
  \le
  3s_{n}\le 3s_0(1+\frac{1}{\varphi_n n})^n
  \le n^{-1/3+4\eps}.
\]

If the matrices $A$ and
$A+B$ are invertible, then the following resolvent identity holds:
\[
\frac{1}{A+B}=\frac{1}{A}-\frac{1}{A}B\frac{1}{A+B}.
\]

Applying the equality repeatedly we get
\[
\frac{1}{A+B}=\frac{1}{A}-\frac{1}{A}B\frac{1}{A}+\frac{1}{A}B\frac{1}{A}B\frac{1}{A}-\left(\frac{1}{A}B\right)^{3}\frac{1}{A}\cdots\pm\left(\frac{1}{A}B\right)^{k}\frac{1}{A+B}.
\]

Suppose that (\ref{eq:Ggamma}) holds up to $\gamma-1$. Let $A=W_{\b,\,\gamma-1}-\left(E+\i\eta\right)I_{n}$
and $B=h_{\b+1,\gamma}e_{\gamma}e_{\gamma}^{\top}$. For simplicity,
we write
\[
h=h_{\b+1,\gamma},\ P=e_{\gamma}e_{\gamma}^{\top},\ R=\frac{1}{A}=G_{\b,\gamma-1}\left(E+\i\eta\right),\,\text{and }
S=\frac{1}{A+B}=G_{\b,\gamma}\left(E+\i\eta\right).
\]
 The equality above can be written as
\[
S=\frac{1}{A+B}=R-hRPR+h^{2}\left(RP\right)^{2}R+\dots h^{k}\left(RP\right)^{k}S.
\]
Entry-wise, we have
\begin{align}
  S_{ij}
=&
  R_{ij}
  - h R_{i\gamma}R_{\gamma j}
  + h^2 R_{i\gamma}R_{\gamma\gamma} R_{\gamma j}
  \dots (-1)^k h^k R_{i\gamma} R^{k-1}_{\gamma\gamma} S_{\gamma j} \notag \\
=&
  R_{ij}
  - h R_{i\gamma}R_{\gamma j}
  \left(
    \sum_{l=0}^k\left(-h R_{\gamma \gamma}\right)^l
  \right)
  + (-1)^k h^k R_{i\gamma} R^{k-1}_{\gamma\gamma} S_{\gamma j} \label{eq:SR1}
\end{align}

We will  use the following uniform bound of the entries of $S$:
\[
	|S_{\gamma j}| \le \|S\| = \left\|\frac{1}{W_{\beta,\gamma}(E+\i \eta)}\right\| \le \frac{1}{\eta} \le n^{2/3+\eps}.
\]
Together with $|h|<\frac{\varphi_n}{n}$ and $\max\left\{ |R_{i\gamma}|, \, |R_{\gamma j}|\right\}\le 1+ s_\gamma \le 2$,
this means that the last summand in \eqref{eq:SR1} is less
than $\frac{1}{n^{3}}$ if we pick $k=5$. From now on we will fix $k=5$.
Then,
\[
  \left|
    \sum_{l=0}^k\left(-h R_{\gamma \gamma}\right)^l
  \right|
  \le C.
\]
for some absolute constant $C>0$. Therefore,
\begin{align*}
\left|S_{ij}-\delta_{ij}\right| & \le\left|R_{ij}-\delta_{ij}\right|+C\left|hR_{i\gamma}R_{j\gamma}\right|+\frac{1}{n^{3}}\\
 & \le\phi\left(i,j,\gamma-1\right)s_{\gamma-1}+C\left|hR_{i\gamma}R_{\gamma j}\right|+\frac{1}{n^{3}}.
\end{align*}

It remains to show
\[
\phi\left(i,j,\gamma-1\right)s_{\gamma-1}+C\left|hR_{i\gamma}R_{j\gamma}\right|+\frac{1}{n^{3}}\le\phi\left(i,j,\gamma\right)s_{\gamma}.
\]
Consider $\gamma\notin\left\{ i,j\right\} $. We use the bound $\left|R_{i\gamma}\right|\le3s_{\gamma-1}$
and $\left|R_{\gamma j}\right|\le3s_{\gamma-1}<\frac{1}{\varphi_{n}^{3}}$
to get
\begin{align*}
C\left|hR_{i\gamma}R_{j\gamma}\right|+\frac{1}{n^{3}} & \le s_{\gamma-1}\frac{C}{n\varphi_{n}^{2}}+\frac{1}{n^{3}}\le s_{\gamma-1}\frac{1}{n\varphi_{n}}.
\end{align*}
Therefore, we have
\begin{align*}
  \left|S_{ij}-\delta_{ij}\right|
\le&
  \phi\left(i,j,\gamma-1\right)s_{\gamma-1}+\frac{1}{n\varphi_{n}}s_{\gamma-1}\\
\le&
  \phi\left(i,j,\gamma-1\right)s_{\gamma-1}\left(1+\frac{1}{\varphi_n n}\right)\\
\le&
  \phi\left(i,j,\gamma\right)s_{\gamma}
\end{align*}

In the case $\gamma\in\left\{ i,\,j\right\} $, we use the trivail
bounds that $\max\left\{ \left|R_{i\gamma}\right|,\left|R_{j\gamma}\right|\right\} \le1+3s_{\gamma-1}\le2$.
Thus, we have
\[
C\left|hR_{i\gamma}R_{j\gamma}\right|+\frac{1}{n^{3}}\le\frac{4\varphi_{n}}{n}+\frac{1}{n^{3}}\le s_{0}.
\]
Notice that $\phi\left(i,j,\gamma\right)-\phi\left(i,j,\gamma-1\right)\ge1$ since $\gamma \in \{ i,\, j \}$.
\[
\left|S_{ij}-\delta_{ij}\right|\le\phi\left(i,j,\gamma-1\right)s_{\gamma-1}+s_{0}\le\phi\left(i,j,\gamma\right)s_{\gamma}.
\]
The result follows.
\end{proof}
Now we are ready to prove Lemma \ref{lem:GFCT}.
\begin{proof}
Recall that our goal is to show  that
\[
\left|
  \left(\EE^{W_{\b,\gamma}}-\EE^{W_{\b,\gamma+1}}\right)
  F\left(
     n\int_{E_{1}}^{E_{2}}\Im{m\left(y+i\eta\right)}\d y
  \right)
\right|
\le n^{-2}n^{-1/3+c\eps}
\]
With probability greater than $1-n^{-D}$,
we have
\[
\sup_{\left|E-2\right|\le n^{-2/3+\eps}}\left|\left(G_{\b,\gamma}\left(E+\i\eta\right)\right)_{ij}-\delta_{ij}\right|\le n^{-1/3+\eps}.
\]
for $\b= 0, \dots,\, n-1$ and $\gamma = 0, \dots, \, n-1$. Now, we fix $\b$ and $\gamma$.  Fix a sample of $W_{\b,\gamma-1}$
such that the above inequality holds.

We recycle the notation from the proof of Proposition \ref{prop:SSLforall}.
Let $A=W_{\b,\,\gamma-1}-\left(E+\i\eta\right)I_{n}$ and $B=h_{\b+1,\gamma}e_{\gamma}e_{\gamma}^{\top}$.
For simplicity, we write
\[
h=h_{\b+1,\gamma},\ P=e_{\gamma}e_{\gamma}^{\top},\,\, R=\frac{1}{A}=G_{\b,\gamma-1}\left(E+\i\eta\right), \text{ and }
S=\frac{1}{A+B}=G_{\b,\gamma}\left(E+\i\eta\right).
\]
Then,
\[
S_{ij}=R_{ij}+hR_{i \gamma}R_{j \gamma}+h^{2}R_{i \gamma}R_{\gamma \gamma}R_{j \gamma}+h^{3}R_{i \gamma}R_{\gamma \gamma}^{2}S_{j \gamma},
\]
where, as before, $|S_{j \gamma}| \le \norm{S} \le n^{2/3+ \eps}$.
Taking expectation with respect to $h$ and using $|R_{i \gamma}| \le n^{-1/3+ \eps} + \delta_{i \gamma}$, we get
\[
\left|\EE_{h}S_{ii}-R_{ii}\right| \le \frac{2}{n^{2}}n^{-2/3+2\eps}+\frac{C}{n^{3}} n^{1/3+ 2 \eps}+\delta_{i \gamma}\frac{C}{n^2}.
\]
Furthermore, if  $\left|h\right|\le\frac{\varphi_{n}}{n}$, then
by (\ref{eq:SR1})
\[
\left|S_{ii}-R_{ii}\right|\le C\left|hR_{i\gamma}R_{\gamma i}\right|+\frac{1}{n^{3}}\le\varphi_{n}n^{-5/3+3\eps}+\delta_{i \gamma}\frac{\varphi_{n}}{n}.
\]
Therefore,
\begin{align}
\text{\ensuremath{\left|\sum_{i=1}^{n}\left(S_{ii}-R_{ii}\right)\right|}} & \le n^{-2/3+4\eps}
\quad \text{when } \left|h\right|\le\frac{\varphi_{n}}{n},
\label{eq:ErrorWithoutExpectation}
\intertext{and}
\left|\EE_{h}\sum_{i=1}^{n}\left(S_{ii}-R_{ii}\right)\right| & \le n^{-5/3+3\eps}.\label{eq:ErrorWithExpectation}
\end{align}
Now we examine the difference:
\begin{align*}
 & F\left(\int_{E_{1}}^{E_{2}}\sum_{i}S_{ii}\left(y+\i\eta\right)\d y\right)-F\left(\int_{E_{1}}^{E_{2}}\sum_{i}R_{ii}\left(y+\i\eta\right)\d y\right)\\
= & F'\left(\int_{E_{1}}^{E_{2}}\sum_{i}R_{ii}\left(y+\i\eta\right)\d y\right)
\left(\int_{E_{1}}^{E_{2}}\sum_{i}\left(S_{ii}(y+\i\eta)-R_{ii}(y+\i\eta)\right)\d y\right)+\\
 & O\left(\left(\int_{E_{1}}^{E_{2}}\sum_{i}\left(S_{ii}(y+\i\eta)-R_{ii}(y+\i\eta)\right)\d y\right)^{2}\right)
\end{align*}
where we rely on the fact that $F''$ is bounded. Since $\left|E_{2}-E_{1}\right|\le2n^{-2/3+\eps}$, by \eqref{eq:ErrorWithoutExpectation}
we have
\[
\left(\int_{E_{1}}^{E_{2}} \sum_{i} \left(  S_{ii}(y+\i\eta)-R_{ii}(y+\i\eta) \right) \, \d y\right)^{2}\le\left(2n^{-2/3+\eps}n^{-2/3+4\eps}\right)^{2}\le n^{-8/3+C\eps}
\]
if $\left|h\right|\le\frac{\varphi_{n}}{n}$. Furthermore, if we take
the expectation with respect to $h$ and $W_{\b,\gamma}$, the same bound still
holds. Indeed, we can apply this bound conditioning on $\left|h\right|\le\frac{\varphi_{n}}{n}$, and use a trivial bound
\[
\left(\int_{E_{1}}^{E_{2}}\sum_{i}S_{ii}(y+\i\eta)-R_{ii}(y+\i\eta) \d y\right)^{2}\le n^{C}
\]
valid with some fixed constant $C>0$ for other $h$.
Similarly, \eqref{eq:ErrorWithExpectation} yields
\[
\left|\EE_{W_{\b,\,\gamma-1}}\EE_{h}\left(\int_{E_{1}}^{E_{2}}\sum_{i}\left(S_{ii}(y+\i\eta)-R_{ii}(y+\i\eta)\right) \d y\right)\right|\le n^{-7/3+C\eps}.
\]

Therefore, we conclude that
\[
\left(\EE^{W_{\b,\gamma-1}}-\EE^{W_{\b,\gamma}}\right)\Im{m\left(E+\i\eta\right)}\le n^{-2}n^{-1/3+c\eps}
\]
finishing the proof.
\end{proof}

\bibliographystyle{plain}

\begin{bibdiv}
  \begin{biblist}
    \bib{anderson_guionnet_zeitouni_2009}{book}{
        author={Anderson, Greg~W},
        author={Guionnet, Alice},
        author={Zeitouni, Ofer},
         title={{An Introduction to Random Matrices}},
        series={Cambridge Studies in Advanced Mathematics},
     publisher={Cambridge University Press},
          date={2009},
  }

  \bib{Arora2012EigenvectorsOR}{inproceedings}{
        author={Arora, Sanjeev},
        author={Bhaskara, Aditya},
         title={{Eigenvectors of Random Graphs: Delocalization and Nodal
    Domains}},
          date={2012},
  }

  \bib{BHY19}{article}{
    author={Bauerschmidt, Roland},
    author={Huang, Jiaoyang},
    author={Yau, Horng-Tzer},
    title={{Local Kesten--McKay Law for Random Regular Graphs}},
    journal={Communications in Mathematical Physics},
    year={2019},
    volume={369},
    number={2},
  pages={523 \ndash 636},
}

  \bib{EJP3054}{article}{
        author={Bloemendal, Alex},
        author={Erd{\"o}s, L{\'{a}}szl{\'{o}}},
        author={Knowles, Antti},
        author={Yau, Horng-Tzer},
        author={Yin, Jun},
         title={{Isotropic local laws for sample covariance and generalized
    Wigner matrices}},
          date={2014},
          ISSN={1083-6489},
       journal={Electron. J. Probab.},
        volume={19},
         pages={no. 33, 1\ndash 53},
           url={http://ejp.ejpecp.org/article/view/3054},
  }

  \bib{BY17}{article}{
        author={Bourgade, Paul},
        author={Yau, Horng-Tzer},
         title={{The eigenvector moment flow and local quantum unique ergodicity}},
          date={2017},
       journal={Comm. Math. Phys.},
        volume={350},
        number={1},
         pages={231 \ndash 278},
  }

  \bib{bourgade2017}{article}{
        author={Bourgade, Paul},
        author={Huang, Jiaoyang},
        author={Yau, Horng-Tzer},
         title={{Eigenvector statistics of sparse random matrices}},
          date={2017},
       journal={Electron. J. Probab.},
        volume={22},
         pages={38 pp.},
           url={https://doi.org/10.1214/17-EJP81},
  }

  \bib{BYY19}{article}{
        author={Bourgade, Paul},
        author={Yau, Horng-Tzer},
        author={Yin, Jun},
         title={{Random band matrices in the delocalized phase: 
         Quantum unique ergodicity and universality}},
        year ={to appear},
       journal={Comm. Pure Appl. Math},

  }

  \bib{CH89}{book}{
        author={Courant, Richard},
        author={Hilbert, David},
         title={{Methods of Mathematical Physics.}},
     publisher={Wiley-VCH},
          date={1989},
        volume={1},
          ISBN={9780471504474},
    url={http://proxy.lib.umich.edu/login?url=http://search.ebscohost.com/login.aspx?direct=true{\&}db=nlebk{\&}AN=26283{\&}site=ehost-live{\&}scope=site},
  }

  \bib{DLL11}{article}{
        author={Dekel, Yael},
        author={Lee, James~R.},
        author={Linial, Nathan},
         title={{Eigenvectors of random graphs: Nodal Domains}},
          date={2011},
          ISSN={10429832},
       journal={Random Structures and Algorithms},
        volume={39},
        number={1},
         pages={39\ndash 58},
        eprint={0807.3675},
  }

  \bib{EKYY12}{article}{
        author={Erd{\"o}s, L{\'{a}}szl{\'{o}}},
        author={Knowles, Antti},
        author={Yau, Horng-Tzer},
        author={Yin, Jun},
         title={{Spectral statistics of Erd{\"o}s-R{\'{e}}nyi graphs II: 
          Eigenvalues spacing and the extreme eigenvalues}},
          date={2012},
       journal={Comm. Math. Phys.},
        volume={314},
        number={3},
         pages={587\ndash 640},
  }

  \bib{Erdos2012b}{article}{
        author={Erd{\"o}s, L{\'{a}}szl{\'{o}}},
        author={Yau, Horng-Tzer},
        author={Yin, Jun},
         title={{Rigidity of eigenvalues of generalized Wigner matrices}},
          date={2012},
          ISSN={0001-8708},
       journal={Advances in Mathematics},
        volume={229},
        number={3},
         pages={1435\ndash 1515},
    url={https://www.sciencedirect.com/science/article/pii/S0001870811004063?via{\%}3Dihub},
  }

  \bib{Erdos2013}{article}{
        author={Erd{\"o}s, L{\'{a}}szl{\'{o}}},
        author={Knowles, Antti},
        author={Yau, Horng-Tzer},
        author={Yin, Jun},
         title={{Spectral statistics of Erd{\"o}s-R{\'{e}}nyi graphs I: local
         semicircle law }},
          date={2013},
          ISSN={0001-8708},
       journal={Ann. Probab.},
        volume={41},
        number={3},
         pages={2279 \ndash 2375},
  }

  \bib{HLY15}{article}{
    author={Huang, Jiaoyang},
    author={Landon,Benjamin},
    author={Yau,Horng-Tzer},
    title={{Bulk universality of sparse random matrices}},
    journal ={Journal of Mathematical Physics},
    volume ={56},
    number = {12},
    pages = {123\ndash 301},
year = {2015},
  }

  \bib{Knowles2013a}{article}{
        author={Knowles, Antti},
        author={Yin, Jun},
         title={{
           The isotropic semicircle law and
           deformation of Wigner matrices
         }},
          date={2013},
       journal={
         Comm. Pure Appl. Math.
       },
        volume={66},
        number={11},
         pages={1663 \ndash 1770},
  }

  \bib{KY13}{article}{
        author={Knowles, Antti},
        author={Yin, Jun},
         title={{Eigenvector distribution of Wigner matrices}},
          date={2013},
          ISSN={01788051},
       journal={Probability Theory and Related Fields},
        volume={155},
        number={3-4},
         pages={543\ndash 582},
  }
  \bib{LS18}{article}{
    author={Lee, Ji Oon},
    author={Schnelli, Kevin},
    title={{Local law and Tracy-Widom limit for sparse random matrices}},
    journal={Probability Theory and Related Fields},
    year ={2018},
    volume = {171},
    number = {1},
    pages={543\ndash 616},
}

  \bib{LPRTV05}{article}{
        author={Litvak, Alexander},
        author={Pajor, A},
        author={Rudelson, M},
        author={Tomczak-Jaegermann, Nicole},
        author={Vershynin, R},
         title={{Euclidean embeddings in spaces of finite volume ratio via random
    matrices}},
          date={2005},
       journal={J. Reine Angew.
    Math.},
        volume={2005},
         pages={1\ndash 19},
  }

  \bib{Nguyen2017}{article}{
        author={Nguyen, Hoi},
        author={Tao, Terence},
        author={Vu, Van},
         title={{Random matrices: tail bounds for gaps between eigenvalues}},
          date={2017},
          ISSN={1432-2064},
       journal={Probability Theory and Related Fields},
        volume={167},
        number={3},
         pages={777\ndash 816},
           url={https://doi.org/10.1007/s00440-016-0693-5},
  }

  \bib{Rudelson2017}{article}{
        author={Rudelson, Mark},
         title={{Delocalization of eigenvectors of random matrices}},
       journal={submitted},
        eprint={arXiv:1707.08461},
           url={http://arxiv.org/abs/1707.08461},
  }

  \bib{Rudelson2013}{article}{
        author={Rudelson, Mark},
        author={Vershynin, Roman},
         title={{Hanson-Wright inequality and subgaussian concentration}},
          date={2013},
          journal={Electron. Commun. Probab.},
        volume={18},
        number={82},
        pages={9 pp.},
  }

  \bib{Rudelson2016}{article}{
        author={Rudelson, Mark},
        author={Vershynin, Roman},
         title={{No-gaps delocalization for general random matrices}},
          date={2016},
          ISSN={1016443X},
       journal={Geometric and Functional Analysis},
       volume={26},
       number={6},
         pages={1716 \ndash 1776},
  }

  \bib{Stroock2011}{book}{
        author={Stroock, Daniel W.},
         title={{Probability theory. An analytic view
          Second edition}},
     publisher={Cambridge Univ. Press},
       address={Cambridge},
          date={2011},
  }

  \bib{TVW13}{article}{
    author={Tran, Linh V.},
    author={Vu, Van H.},
    author={Wang, Ke},
    title={{Sparse random graphs: Eigenvalues and eigenvectors}},
    journal={Random Structures \& Algorithms},
    volume = {42},
    number = {1},
    pages = {110 \ndash 134},
    year = {2013},
  }

  \bib{Vershynin2018}{book}{
        author={Vershynin, Roman},
         title={{High-Dimensional Probability: An Introduction with Applications
    in Data Science}},
     publisher={Cambridge Univ. Press},
       address={Cambridge},
          date={2018},
  }

  \bib{Wig1955}{article}{
        author={Wigner, Eugene P.},
         title={{Characteristic vectors of bordered matrices with infinite dimensions}},
          date={1955},
       journal={Ann. of Math.},
       volume={62},
       number={3},
         pages={548 \ndash 564},
  }

  \bib{Zelditch2017}{book}{
        author={Zelditch, Steve},
         title={{Eigenfunctions of the Laplacian of a Riemannian manifold. CBMS
    Regional Conference Series in Mathematics, vol. 125}},
     publisher={American Mathematical Society},
       address={Providence, RI},
          date={2017},
          ISBN={9781470410377},
           url={{\%}3C?xml version={\%}221.0{\%}22
    encoding={\%}22UTF-8{\%}22?{\%}3E{\%}3Ccollection{\%}3E{\%}3Crecord{\%}3E{\%}3Cleader{\%}3E00000cam
    a2200577 i 4500{\%}3C/leader{\%}3E{\%}3Ccontrolfield
    tag={\%}22001{\%}22{\%}3E016055025{\%}3C/controlfield{\%}3E{\%}3Ccontrolfield
    tag={\%}22003{\%}22{\%}3EMiU{\%}3C/controlfield{\%}3E{\%}3Ccontrolfiel},
  }

  \end{biblist}
  \end{bibdiv}

\end{document}